\definecolor{Bath}{RGB}{0,59,209}
\newcommand*\bigcdot{\mathpalette\bigcdot@{.5}}
\newcommand*\bigcdot@[2]{\mathbin{\vcenter{\hbox{\scalebox{#2}{$\m@th#1\bullet$}}}}}
\newcommand{\norm}[1]{\left\lVert#1\right\rVert}
\newcommand{\normm}[1]{\lVert#1\rVert}
\newcommand{\sgn}{\textnormal{sgn}}
\newtheorem{thm}{Theorem}
\newtheorem{lem}[thm]{Lemma}
\newtheorem{rem}[thm]{Remark}
\newtheorem{cor}[thm]{Corollary}
\newtheorem{defn}[thm]{Definition}
\newtheorem{prop}[thm]{Proposition}
\providecommand{\keywords}[1]
{
  \small	
  \textbf{\textit{Keywords---}} #1
}
\providecommand{\msc}[1]
{
  \small	
  \textbf{\textit{MSC---}} #1
}
\begin{document}

\title{\vspace{-3cm}Weighted $\infty$-Willmore Spheres\vspace{-0.2cm}}
\author{
Ed Gallagher \thanks{Department of Mathematical Sciences, University of Bath, Bath, BA2 7AY, UK. Email: \href{redg22@bath.ac.uk}{redg22@bath.ac.uk}}
\and
Roger Moser \thanks{Department of Mathematical Sciences, University of Bath, Bath, BA2 7AY, UK. Email: \href{rm257@bath.ac.uk}{rm257@bath.ac.uk}}
}
\date{ \vspace{-1cm}}

\maketitle


\begin{abstract}
\noindent On the two-sphere $\Sigma$, we consider the problem of minimising among suitable immersions $f \,\colon \Sigma \rightarrow \mathbb{R}^3$ the weighted $L^\infty$ norm of the mean curvature $H$, with weighting given by a prescribed ambient function $\xi$, subject to a fixed surface area constraint. We show that, under a low-energy assumption which prevents topological issues from arising, solutions of this problem and also a more general set of ``pseudo-minimiser'' surfaces must satisfy a second-order PDE system obtained as the limit as $p \rightarrow \infty$ of the Euler-Lagrange equations for the approximating $L^p$ problems. This system gives some information about the geometric behaviour of the surfaces, and in particular implies that their mean curvature takes on at most three values: $H \in \{ \pm \normm{\xi H}_{L^\infty} \}$ away from the nodal set of the PDE system, and $H = 0$ on the nodal set (if it is non-empty).
\end{abstract}

\keywords{curvature functional, mean curvature, $L^\infty$ variational problem, Willmore energy}

\msc{35A15, 49Q10, 53A05, 53C42, 58E12}

\section{Introduction}\label{sec:intro}

\noindent Functionals involving the mean curvature of surfaces have been studied extensively, both as interesting objects in their own right and as important tools in applications to other areas. The most famous such functional is the \textit{Willmore energy}, defined for an immersion $f : \Sigma \rightarrow \mathbb{R}^n$ of a surface $\Sigma$ with $L^2$ mean curvature $H$ and induced surface measure $\mu$ by
\[
\mathcal{W}[f] := \int_\Sigma H^2 \, \mathrm{d}\mu = \normm{H}_{L^2}^2.
\]
Note that here and throughout this paper we use the convention that the mean curvature is given by the \textit{mean} of the principal curvatures $(\kappa_1 + \kappa_2)/2$ rather than the sum of them. The Willmore energy takes its name from Willmore, who conjectured \cite{willmore1965note} that the inequality $\mathcal{W}[f] \geq 2 \pi^2$ holds for every immersion $f: \mathbb{T}^2 \rightarrow \mathbb{R}^3$ of the torus $\mathbb{T}^2$ into three-dimensional Euclidean space. Nearly 50 years after the conjecture was made, it was confirmed in the celebrated paper of Marques \& Neves \cite{marques2014min}. The Willmore energy and its critical points (known as ``Willmore surfaces''), in particular minimisers, have been studied extensively (see e.g. \cite{li1982new}, \cite{li2002willmore}, \cite{riviere2008analysis}, \cite{kuwert2004removability}, \cite{bryant1984duality}, \cite{simon1993existence}, \cite{bohle2008constrained}, \cite{bernard2014energy}, \cite{musso1990willmore}, \cite{kuwert2001willmore}, \cite{castro2001willmore}, though this list does not pretend to be exhaustive). \\

\noindent To give some other examples of mean-curvature-based functionals, minimisers of
\[
\int_\Sigma (H - c_0)^2 \, \mathrm{d}\mu,
\]
where fixed surface area and volume constraints may be imposed and $c_0$ is a so-called ``spontaneous curvature'' constant, have been proposed \cite{CANHAM1970} \cite{Helfrich1973elastic} \cite{zhong1987instability} to represent the shape of red blood cells and more generally elastic membranes. In \cite{gruber2019variation} and \cite{gruber2019curvature} the $L^p$ analogue of the Willmore energy is studied, again under surface area and volume constraints which ensure the problem is meaningful. In \cite{bernard2019uniform}, critical points of functionals of the form
\[
\int_\Sigma F(H^2) \, \mathrm{d}\mu
\]
are studied for a wide class of functions $F$, and it is shown that under certain assumptions such critical points must be smooth. The functional
\[
\tilde{\mathcal{W}}_p [f] = \int_\Sigma \left( \frac{1}{4} + H^2 \right)^{\frac{p}{2}}, 
\]
is investigated in \cite{kuwert2015two}; here an area restriction is in a sense built into the functional, which is immediately seen to be bounded from above and below by
\[
4^{-\frac{p}{2}} \mathcal{A}[f] \leq \tilde{\mathcal{W}}_p [f] \leq 2^{-\frac{p}{2}} \mathcal{A}[f] + 2^{\frac{p}{2}} \int_\Sigma H^2 \, \mathrm{d}\mu.
\]
The results of \cite{kuwert2015two} are discussed in more detail in Section \ref{sec:approx} as they are used to obtain the results of this paper. \\

\noindent To the best of our knowledge, although as shown by the preceding examples the $L^p$ mean curvature and similar functionals have been investigated in the case where $p$ is finite, no attention has previously been given to what happens when $p = \infty$, which it is the purpose of this paper to consider. More specifically, we consider the functional given by the weighted $L^\infty$ norm of the mean curvature of immersions in $\mathbb{R}^3$ and we investigate how its minimisers, as well as a slightly wider class of ``pseudominimiser'' immersions, behave. It is perhaps unsurprising that this problem has not been looked at before, as variational problems in $L^\infty$ are fundamentally and significantly different to variational problems in finite $L^p$ spaces-- in particular, the standard variational approach of computing the Euler-Lagrange equations cannot possibly work as the $L^\infty$ norm is not differentiable, and it is not clear how to define a critical point of the $L^\infty$ norm (although local/global minimisers still make sense). \\

\noindent First order variational problems in $L^\infty$ and their associated PDEs were first studied by Aronsson in the 1960s in the series of papers \cite{aronsson1965minimization}, \cite{aronsson1966minimization}, \cite{aronsson1969minimization}, and the first-order case is now well-developed, but this is less relevant to curvature functionals because curvature is a property which depends on up to second derivatives. In contrast, second order $L^\infty$ variational problems are inherently more complicated than the first order problems and therefore are less well understood, however they have been considered previously such as in \cite{katzourakis2019existence} which considers variational problems for second order $L^\infty$ functionals of a given form. \\
\noindent Although there are examples where $L^\infty$ variational problems give rise to a connection between minimisers and a system of differential equations, they almost exclusively focus on problems without a geometric background; the rare exceptions are fundamentally different to the problem of minimising the mean curvature and so the results of this paper represent a novel and significant development of the theory. The relevant previous literature which combines geometric analysis and calculus of variations in $L^\infty$ includes \cite{moser2021structure} and its generalisation \cite{gallagher2022infty}, which are concerned with the minimisation of the curvature of curves obeying boundary and length constraints, and \cite{moser2012minimizers} and \cite{sakellaris2017minimization} which consider $L^\infty$ minimisation problems for the scalar (Gaussian) curvature. The problem for curves is in a sense the one-dimensional analogue of the (two-dimensional) problem we investigate here and so it is naturally simpler to work with, and moreover we consider a different type of weighting to that seen in \cite{moser2021structure}. The scalar curvature of course behaves very differently to the mean curvature as e.g. the latter quantity is extrinsic and the former is intrinsic. \\
 
\noindent Specifically, the setup of the problem we consider is the following: let $\Sigma$ be the topological two-sphere and take $\mathcal{G}$ to be the set of admissible immersions of $\Sigma$ into $\mathbb{R}^3$: those which are sufficiently regular and have prescribed surface area $A$, denoted by the set
\[
\mathcal{G} = \left\{ f \in \bigcap_{q < \infty} W^{2,q}(\Sigma,\mathbb{R}^3) \,:\, H \in L^\infty, \, \mathcal{A} [f] = A \right\}.
\]
Let $\xi: \mathbb{R}^3 \rightarrow \mathbb{R}$ be a function satisfying the following three conditions: $\xi$ must be sufficiently regular in the sense that
\begin{equation}\label{cond:alph1}
\xi \in W^{2,\infty}_{\textnormal{loc}}(\mathbb{R}^3),
\end{equation}
$\xi$ must be bounded from below by one,
\begin{equation}\label{cond:alph2}
\xi \geq 1,
\end{equation}
and $\xi$ must tend to infinity as $\vert x \vert$ does:
\begin{equation}\label{cond:alph3}
\lim_{\vert x \vert \rightarrow \infty} \xi(x) = \infty.
\end{equation}
\noindent These conditions allow for a wide and general range of weight functions $\xi$. \\
\noindent We seek to solve the following minimisation problem:
\begin{equation}
\begin{dcases}\label{problem}
\text{Minimise the functional } \mathcal{M}_\infty [f] := \normm{\xi H}_{L^\infty}\\
\text{over the set of immersions } \mathcal{G}.
\end{dcases}
\end{equation}
The prescribed surface area $A > 0$ can be any given positive constant, and the area functional $\mathcal{A}$ of a surface is given by
\[
\mathcal{A}[f] = \int_\Sigma 1 \, \mathrm{d}\mu.
\]
The norm $L^\infty$ in \eqref{problem} refers to the $L^\infty$ norm taken with respect to the surface measure $\mu$ induced by the immersion $f$ itself. The Sobolev space $W^{2,q}(\Sigma,\mathbb{R}^3)$, where $1 \leq q \leq \infty$, can be defined e.g. as the set of $f: \Sigma \rightarrow \mathbb{R}^3$ such that the differential of $f$ has rank $2$ at every point (i.e. $f$ is an immersion) and at every point $x \in \Sigma$, $f$ can be expressed as a graph $(x,y,u(x,y))$ over some ball $B$ contained in the tangent plane $T_{f(x)}f(\Sigma)$ with $u \in W^{2,q}(B)$. It is more common in the literature to not require that elements of $W^{2,q}(\Sigma,\mathbb{R}^3)$ be immersions, although for the purposes of this paper we may freely assume that all maps $f$ we work with are immersions and so this point is inconsequential. \\

\noindent The inclusion of the weight function $\xi$ is essential, or else it is extremely quick to solve the problem using relatively simple machinery: with no weight, i.e. in the case $\xi \equiv 1$, the inequality $H^2 \geq K$ and the Gauss-Bonnet theorem together imply that the round sphere is the unique minimiser (up to translation). The surface area constraint is necessary to ensure the problem is non-degenerate (else uniform scalings of a given surface would result in immersions with $\normm{H}_{L^\infty}$ arbitrarily close to zero), as is condition \eqref{cond:alph3} on the weight function (else it would be possible for a minimising sequence to ``translate to $\infty$'' and fail to converge). The presence of condition \eqref{cond:alph3} may appear slightly artificial; in fact it is not a necessary condition for minimisers to exist, and there are other conditions it could be replaced by which would also ensure existence. However, we have included it in its given form here to prevent our proofs from becoming overly bogged down with technicalities and highly involved arguments that distract from the main story of the paper. We believe that any reasonable hypothesis which means it is not advantageous for a minimising sequence to ``translate to $\infty$'' could replace condition \eqref{cond:alph3} and allow our results to remain valid; for example, one could assume that $\xi(x)$ is an increasing function of $\vert x \vert$ outside of some compact subset of $\mathbb{R}^3$, or that $\xi$ is periodic. \\

\noindent Note that we do not consider the analogous $L^\infty$ problem with the enclosed volume of the immersions controlled instead of their area. Although this seems like a natural problem to consider, it is in fact degenerate, as \cite{ferone2016maximal} shows by constructing a smooth embedding of the sphere with arbitrarily small volume for given maximum mean curvature; hence by scaling, arbitrarily small $\normm{H}_{L^\infty}$ for given volume. \\

\noindent As well as minimisers of $\mathcal{M}_\infty$, the results of this paper naturally suggest that we also consider the following class of surfaces, which we call ``$\infty$-Willmore spheres'':
\begin{defn}\label{def:infWillmoresurface}
Fix a weight function $\xi: \mathbb{R}^3 \rightarrow \mathbb{R}_{\geq 1}$ satisfying \eqref{cond:alph1}--\eqref{cond:alph3} and a prescribed surface area constant $A > 0$. An immersion $f \in \mathcal{G}$ is called an $\infty$-Willmore sphere if there exists a constant $M > 0$ such that the inequality
\[
\mathcal{M}_\infty [f] \leq \mathcal{M}_\infty[f_0] + \frac{M}{2A} \int_\Sigma \textnormal{dist}(f_0(x),f(\Sigma))^2 \, \mathrm{d}\mu_0
\]
is satisfied for all $f_0 \in \mathcal{G}$. \\
Here, $\textnormal{dist}(f_0(x),f(\Sigma))$ denotes the (Euclidean, $\mathbb{R}^3$) distance between the point $f_0(x)$ and the set $f(\Sigma)$, and $\mu_0$ denotes the surface measure induced by the immersion $f_0$. 
\end{defn}
\noindent It is clear from Definition \ref{def:infWillmoresurface} that minimisers of our problem are automatically $\infty$-Willmore spheres, but the definition also allows for a wider class of ``pseudo-minimiser'' immersions which are not minimal. We shall see in Section \ref{sec:limitsystem} that under certain natural conditions, $\infty$-Willmore spheres (minimisers, in fact) are guaranteed to exist. Because the definition of an $\infty$-Willmore sphere only depends on $f$ via its image $\text{Im}(f)$ and not its specific parametrisation, we may slightly abuse notation by (implicitly) identifying an immersion with its image.\\

\noindent The main result of this paper is the following, concerning the structure and geometry of $\infty$-Willmore spheres:
\begin{thm}\label{thm:mainthm}
Let $\xi$ be a given weight function satisfying \eqref{cond:alph1}$\,$--$\,$\eqref{cond:alph3} and let $f \in \mathcal{G}$ be an $\infty$-Willmore sphere. Assume that the low-energy inequality $\normm{\xi H}_{L^\infty} A^{\frac{1}{2}} < \sqrt{8\pi}$ is satisfied. Then the following statements hold: \\
1) There exist a function
\[
w \in \bigcap_{q < \infty} W^{2,q}(\Sigma) \backslash \{ 0 \},
\]
a function $Q \in \bigcap_{q < \infty} L^q(\Sigma)$, and a number $\lambda \in \mathbb{R}$ such that the system of equations
\begin{align}
   \frac{1}{2} \Delta w + Q w &= \lambda H, \label{eq:main1}\\
   hw &= \vert w \vert \xi H, \label{eq:main2}
\end{align}
is satisfied almost everywhere on $\Sigma$, where $\Delta$ denotes the Laplace-Beltrami operator on the surface $f(\Sigma)$, and $h = \normm{\xi H}_{L^\infty} > 0$ is the weighted $L^\infty$ norm of the mean curvature. \\
2) The weighted mean curvature $\xi(f(x)) H(x)$ of $f$ takes on only three values up to null sets: where $w$ is positive, $\xi H = h$; where $w$ is zero, $\xi H = 0$; and where $w$ is negative, $\xi H = -h$.
\end{thm}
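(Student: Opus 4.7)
My plan is to follow the $L^p$-approximation strategy flagged in the introduction: since $\mathcal{M}_\infty$ is not differentiable, I would approximate the pseudo-minimisation problem by a sequence of penalised $L^p$ problems, derive the (smooth) Euler--Lagrange equations at finite $p$, and pass to the limit $p \to \infty$ to recover \eqref{eq:main1}--\eqref{eq:main2}. Concretely, for each large $p$, I would consider minimisers $f_p \in \mathcal{G}$ of a Kuwert--Schätzle-type functional, such as a weighted $\tilde{\mathcal{W}}_p$ of the form $\int_\Sigma (\tfrac14 + \xi^2 H^2)^{p/2}\, d\mu$, augmented by the distance penalty $\frac{M}{2A}\int_\Sigma \textnormal{dist}(f_0(x), f(\Sigma))^2 \, d\mu_0$ from Definition \ref{def:infWillmoresurface}, whose existence should follow from \cite{kuwert2015two}. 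The defining inequality of an $\infty$-Willmore sphere, combined with the convergence $\|\cdot\|_{L^p}/A^{1/p} \to \|\cdot\|_{L^\infty}$, should force $f_p \to f$ in a topology strong enough to identify limits of the attached geometric quantities.

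The next step is to write the Euler--Lagrange equation at each $f_p$. Using the standard first-variation formulas for $\int F(H)\, d\mu$ under normal perturbations $V = \phi\nu$, incorporating the area Lagrange multiplier $\lambda_p$ and the penalty term, I expect an equation of the form
\begin{equation*}
\tfrac{1}{2} \Delta_{g_p} w_p + Q_p w_p = \lambda_p H_p + R_p,
\end{equation*}
where $w_p$ is essentially $|\xi H_p|^{p-2}\xi H_p$ normalised by $\mathcal{M}_p[f_p]^{p-1}$ (so that $\|w_p\|_{L^{p/(p-1)}} = 1$), $Q_p$ collects the lower-order geometric terms (notably $|A_p|^2$ and derivatives of $\xi$), and $R_p$ is the distance-penalty contribution, which should vanish in the limit via $f_p \to f$.

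Passing to the limit is the technical heart of the argument. The low-energy hypothesis $\mathcal{M}_\infty[f]\,A^{1/2} < \sqrt{8\pi}$ is precisely what rules out topological degeneration: it forces $\int_\Sigma H^2\, d\mu < 8\pi$, so a Li--Yau-type bound keeps $f$ (and, by uniform approximation, the $f_p$) embedded and free of bubbling, yielding uniform second-fundamental-form bounds in $L^q$ for every $q < \infty$. Elliptic regularity then gives uniform $W^{2,q}$ bounds on $w_p$, so up to subsequences $w_p \rightharpoonup w$, $Q_p \rightharpoonup Q$, $\lambda_p \to \lambda$, and \eqref{eq:main1} emerges. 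The identity \eqref{eq:main2} follows from the concentration structure of $w_p$: its sign tracks that of $\xi H_p$, while its magnitude concentrates on $\{|\xi H_p| \approx \mathcal{M}_p\}$, so in the limit $|\xi H| = h$ on $\{w \ne 0\}$ with sign $\sgn(w)$, i.e.\ $hw = |w|\xi H$.

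Part (2) is then almost immediate from \eqref{eq:main2}: on $\{w > 0\}$ the equation reads $hw = w\xi H$ hence $\xi H = h$, and analogously $\xi H = -h$ on $\{w < 0\}$. On the nodal set $\{w = 0\}$ the standard fact that $\Delta w = 0$ a.e.\ collapses \eqref{eq:main1} to $\lambda H = 0$; either $\lambda \ne 0$, forcing $H = 0$ a.e.\ on $\{w=0\}$, or $\lambda = 0$, in which case $w$ solves a homogeneous elliptic equation and unique continuation renders $\{w = 0\}$ a null set, making the claim vacuous. The principal obstacles I anticipate are (i) establishing the convergence $f_p \to f$ with enough strength to take limits in $Q_p$ (where the $\sqrt{8\pi}$ threshold will be indispensable to exclude concentration of $|A_p|^2$) and (ii) showing that the normalised dual variables $w_p$ admit a non-trivial limit $w \ne 0$, essentially a concentration-compactness problem whose resolution underpins both identities.
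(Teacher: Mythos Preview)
Your overall strategy---penalised $L^p$ approximation, Euler--Lagrange at finite $p$, uniform estimates via the $\sqrt{8\pi}$ threshold and the compactness of \cite{kuwert2015two}, then passage to the limit---matches the paper's. A few technical points you gloss over are handled carefully there: the paper does not work with a fixed regularising constant $\tfrac14$ but introduces a second parameter $\varepsilon$ in $\int((\xi H)^2+\varepsilon)^{p/2}\,d\mu$ and sends $\varepsilon\to 0$ \emph{before} $p\to\infty$, the point being that the $\varepsilon>0$ problem inherits enough regularity from \cite{kuwert2015two} to make $w_{p,\varepsilon}\in W^{2,2}$, which is what allows the $W^{2,q}$ bootstrap you invoke; the penalty uses a smooth regularisation $d$ of the distance to $f_0(\Sigma)$ and is integrated against $d\mu$ (not $d\mu_0$), so its first variation is explicit and vanishes in the limit; and the uniform bound on $\lambda_p$ is obtained by testing \eqref{eq:pEL} against a fixed $\psi$ with $\int\psi H_p\,d\mu_p$ bounded away from zero.

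The one genuine divergence is your treatment of Part~2 on the nodal set when $\lambda\neq 0$, and here your route is \emph{shorter} than the paper's. The paper shows that $\{w=0,\,H\neq 0\}$ is null via a foliation-by-lines argument combined with a Lebesgue-density computation. Your observation that $\Delta w=0$ a.e.\ on $\{w=0\}$ for $w\in W^{2,q}$ (iterate the standard fact $\nabla u=0$ a.e.\ on level sets of $u\in W^{1,q}$, noting that the Laplace--Beltrami operator is a linear combination of first and second partials of $w$), whence \eqref{eq:main1} collapses to $\lambda H=0$ a.e.\ there, is valid and bypasses that machinery entirely. The $\lambda=0$ case via unique continuation coincides with the paper's.
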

\noindent We also obtain sufficient conditions for the existence of an $\infty$-Willmore sphere (in particular a minimiser):
\begin{prop}\label{prop:sufexist}
Suppose we are given a weight function $\xi$ satisfying \eqref{cond:alph1}--\eqref{cond:alph3} and a surface area constraint $A > 0$. Assume that there exists an immersion $f \in \mathcal{G}$ satisfying the low-energy inequality
\[
\normm{\xi H}_{L^\infty} A^{\frac{1}{2}} < \sqrt{8 \pi}.
\]
Then there exists a solution of Problem \eqref{problem}.
\end{prop}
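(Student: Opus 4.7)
The plan is to apply the direct method of the calculus of variations to a minimising sequence. Let $m := \inf_{\mathcal{G}} \mathcal{M}_\infty$; by the hypothesis there exists an immersion in $\mathcal{G}$ with $\normm{\xi H}_{L^\infty} A^{1/2} < \sqrt{8\pi}$, so $m A^{1/2} < \sqrt{8\pi}$. Fix a minimising sequence $\{f_n\} \subset \mathcal{G}$ with $\mathcal{M}_\infty[f_n] \to m$; discarding finitely many terms, one may assume that $\normm{\xi H_n}_{L^\infty}^2 A \leq 8\pi - \varepsilon$ for some $\varepsilon > 0$ independent of $n$. Since $\xi \geq 1$, this yields the strictly sub-$8\pi$ Willmore-energy bound
\[
\mathcal{W}[f_n] = \int_\Sigma H_n^2 \, \mathrm{d}\mu_n \leq \normm{H_n}_{L^\infty}^2 A \leq \normm{\xi H_n}_{L^\infty}^2 A \leq 8\pi - \varepsilon,
\]
so by the Li--Yau inequality each $f_n$ is an embedding.

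Next I would show that the images $f_n(\Sigma)$ lie in a common compact subset of $\mathbb{R}^3$, which is the crucial step for compactness. Using the pointwise inequality $H^2 \geq K$ together with the Gauss--Bonnet theorem on $\Sigma \cong S^2$, one has $\int_\Sigma H_n^2 \, \mathrm{d}\mu_n \geq 4\pi$, and a Chebyshev-type argument combined with the uniform $L^\infty$ bound on $H_n$ yields a subset $E_n \subset \Sigma$ of uniformly positive $\mu_n$-measure on which $|H_n|$ is bounded below by a positive constant independent of $n$. Condition \eqref{cond:alph3} together with the uniform bound on $\normm{\xi H_n}_{L^\infty}$ then forces at least one point of $f_n(E_n)$ to lie in a fixed compact set, and a Simon-type diameter estimate bounding $\mathrm{diam}(f_n(\Sigma))^2$ by a constant multiple of $A \cdot \mathcal{W}[f_n]$ then localises the whole image.

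With uniform area, uniform $L^\infty$ bound on $H_n$, uniform localisation in $\mathbb{R}^3$ and uniform strict sub-$8\pi$ Willmore energy, I would invoke a compactness result for Willmore immersions (in the spirit of Simon or Kuwert--Sch\"atzle), reparametrise if necessary, and extract a subsequence (not relabelled) converging to some $f_\infty \colon \Sigma \to \mathbb{R}^3$ strongly in $C^1$ and weakly-$*$ in $W^{2,\infty}$, with $f_\infty$ still an immersion of $S^2$. Continuity of area under $C^1$ convergence then forces $\mathcal{A}[f_\infty] = A$, so $f_\infty \in \mathcal{G}$; lower-semicontinuity of the weighted $L^\infty$ norm under weak-$*$ convergence of $H_n$ and continuous convergence of $\xi \circ f_n$ then yields $\mathcal{M}_\infty[f_\infty] \leq \liminf_n \mathcal{M}_\infty[f_n] = m$, so $f_\infty$ is a minimiser.

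The main obstacle I expect is this compactness step: one must choose reparametrisations carefully so that $C^1$ convergence is obtained without sacrificing the $L^\infty$ curvature bound in the limit, and one must simultaneously rule out degeneration (bubble pinching, collapse to a lower-dimensional object, or loss of topological type). The strict gap $\mathcal{W}[f_n] \leq 8\pi - \varepsilon$ is precisely what prevents such bubbling phenomena, so the low-energy hypothesis plays a double role in the argument: first securing embeddedness via Li--Yau, and then guaranteeing a non-degenerate limit for the minimising sequence.
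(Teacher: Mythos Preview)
Your approach is genuinely different from the paper's and, modulo one imprecision, workable. The paper does \emph{not} apply the direct method to a minimising sequence for $\mathcal{M}_\infty$; instead it runs the $L^p$ approximation with $\sigma = 0$: for each finite $p$ it first produces a minimiser $f_p$ of $\mathcal{M}_p$ via the Kuwert--Li compactness theorem (Theorem~\ref{thm:KLL}), then applies that theorem again to the sequence $(f_p)_{p\to\infty}$, and finally checks by a short contradiction argument using H\"older's inequality that the limit minimises $\mathcal{M}_\infty$. Your direct route is more economical for bare existence, and your localisation step (Chebyshev on $\int H_n^2\geq 4\pi$, then condition~\eqref{cond:alph3}, then Simon's diameter estimate) is a clean alternative to the translation-and-comparison argument the paper uses in Lemma~\ref{lem:epsminexist}. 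The paper's $L^p$ detour, by contrast, is tailored to deliver the limiting Euler--Lagrange system~\eqref{eq:main1}--\eqref{eq:main2}, which is the real content of the paper but irrelevant to Proposition~\ref{prop:sufexist} alone.

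The point that needs repair is the claimed weak-$*$ convergence in $W^{2,\infty}$. A uniform $L^\infty$ bound on $H_n$ does \emph{not} yield a uniform $L^\infty$ bound on the full second fundamental form (the principal curvatures can be large with opposite signs), so there is no a priori $W^{2,\infty}$ bound from which to extract weak-$*$ compactness. What actually works is to feed your uniform bound $\|H_n\|_{L^\infty}\leq C$ (hence $\|H_n\|_{L^p}\leq C A^{1/p}$ for every finite $p$) together with the strict sub-$8\pi$ Willmore bound into Theorem~\ref{thm:KLL}; after reparametrisation this gives weak $W^{2,q}$ convergence for every $q<\infty$, which is exactly enough to place the limit in $\mathcal{G}$, to identify the weak-$*$ $L^\infty$ limit of $(H_n)$ with the mean curvature of $f_\infty$ (cf.\ Remark~\ref{rem:meancurvconv}), and to run your lower-semicontinuity argument for $\mathcal{M}_\infty$. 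The vaguer ``compactness for Willmore immersions in the spirit of Simon or Kuwert--Sch\"atzle'' would only deliver $W^{2,2}$-type convergence, which does not suffice to conclude $f_\infty\in\mathcal{G}$.
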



\noindent Our results are derived by considering the analogous $L^p$ problem to our $L^\infty$ problem for $p < \infty$, computing the $L^p$ Euler-Lagrange equations, and letting $p$ tend to $\infty$. This method is known as the $L^p$ approximation of the problem. It is because of this approach that we require a strict $8 \pi$ bound on $\normm{\xi H}_{L^\infty} A^{\frac{1}{2}}$ in Theorem \ref{thm:mainthm}; as will be discussed more in Section \ref{sec:approx}, we will obtain $\infty$-Willmore spheres as a sequential limit of immersions from the $L^p$ approximation, and the energy bound is required to ensure the necessary compactness is present by preventing topological problems from arising in the limit. Replacing the assumption in Proposition \ref{prop:sufexist} is in theory possible, for example with some conditions involving a positive lower bound on the injectivity radius, although this would be extremely unwieldy: our assumption is used not only to ensure the existence of our $\infty$-Willmore spheres but also of the sequence of immersions which approximate them, and moreover our assumption is specifically chosen to interact well with the results of \cite{kuwert2015two} which we rely on heavily in our proofs. \\
Although the $L^p$ approximation is widely known and considered a relatively standard technique within the field of calculus of variations in $L^\infty$, its application to geometric problems is non-standard and examples in the literature are sparse, some cases we note here being the aforementioned papers \cite{moser2021structure} \cite{gallagher2022infty} \cite{moser2012minimizers} \cite{sakellaris2017minimization}. As far as the authors know, the way in which it is used in this paper (for sequences of immersions of surfaces) is novel. \\

\noindent This paper is structured as follows. In Section \ref{sec:approx}, we consider the $L^p$ problem with finite $p$ which corresponds to our $L^\infty$ problem and we derive its Euler-Lagrange equations. For technical reasons involving regularity, we approximate the $L^p$ problem itself by another problem, for which we also consider the existence of minimisers and derive the Euler-Lagrange equations. In Section \ref{sec:compactness}, we use our approximation of the $L^p$ problem to show that there exist minimisers of the $L^p$ problem which are sufficiently regular for our purposes. We demonstrate the necessary compactness to take the limit as $p \rightarrow \infty$ of a sequence $(f_p)$ of $L^p$-minimisers and obtain not only a limiting $\infty$-Willmore sphere, but also equations which govern its behaviour, derived from the limit of the $L^p$ Euler-Lagrange equations. This begins the proof of Theorem \ref{thm:mainthm}.
In Section \ref{sec:limitsystem}, we use the limiting system \eqref{eq:main1}--\eqref{eq:main2} obtained in Section \ref{sec:compactness} to obtain some information about how the mean curvature of $\infty$-Willmore spheres behaves, finishing the proof of Theorem \ref{thm:mainthm}. We conclude by proving Proposition \ref{prop:sufexist}, exhibiting natural and relatively loose conditions on our weight function and area constraint under which $\infty$-Willmore spheres are guaranteed to exist. 


\newpage
\section{The Penalised $L^p$ Problem}\label{sec:approx}


\subsection{The Penalisation Term}\label{subsec:penterm}

\noindent Because we obtain our $\infty$-Willmore spheres as limits of appropriate $L^p$ minimisers, to ensure that Theorem \ref{thm:mainthm} applies to \textit{all} $\infty$-Willmore spheres we must ensure that all such surfaces will arise as a sequential limit in this way. To do this we add a penalisation term to our analysis, as was done in \cite{moser2021structure} and \cite{gallagher2022infty}. Although Definition \ref{def:infWillmoresurface} seems to suggest that we involve the distance function in our penalisation term, its potential lack of differentiability could cause technical problems, and so we will instead consider a regularisation of the distance function as follows. Given an immersion $f_0 \in \mathcal{G}$ to penalise against, let $d : \mathbb{R}^3 \rightarrow \mathbb{R}$ be a smooth function satisfying the inequalities
\begin{equation}\label{eq:distineqs}
\frac{1}{C} \textnormal{dist}(y,f_0(\Sigma)) \leq d(y) \leq C \textnormal{dist}(y,f_0(\Sigma)), \quad \quad \vert D d (y) \vert\, \leq C
\end{equation}
for all $y \in \mathbb{R}^3$ and some constant $C$ independent of $f_0$ (for an explicit construction of such a function see e.g. the book of Stein \cite[Chapter 6.2.1]{stein1970singular}). 

\noindent Define the ``penalisation functional'' by
\[
\mathcal{P}^\sigma [ f ; f_0 ] = \frac{\sigma}{2A} \int_\Sigma d(f(x))^2 \, \mathrm{d}\mu
\] 
where $\sigma > 0$ is a given real number. In particular, the penalisation functional is independent of the parametrisation of $f$.  \\

\noindent In light of the preceding discussion, we consider the functional
\[
\mathcal{M}_\infty^{\sigma} [f ; f_0] = \normm{\xi H}_{L^\infty} + \mathcal{P}^\sigma [ f ; f_0 ]
\]
over immersions $f \in \mathcal{G}$ into three-dimensional Euclidean space, where $f_0$ is any given (immersion of an) $\infty$-Willmore sphere. 

\subsection{Minimisers of the $L^p$ functional $\mathcal{M}_p^{\sigma} [ \,\bigcdot\, ; f_0 ]$}\label{subsec:Lpmin}

\noindent The $L^p$ approximating functionals for $\mathcal{M}_\infty^{\sigma}$ are given by
\[
\mathcal{M}_p^{\sigma} [f ; f_0 ] := A^{- \frac{1}{p}} \normm{\xi H}_{L^p} + \mathcal{P}^\sigma [ f ; f_0 ],
\]
with the admissible set of immersions given by $\{ f \in W^{2,p}(\Sigma, \mathbb{R}^3) \,:\, \mathcal{A}[f] = A  \}$. The factor of $A^{- \frac{1}{p}}$ in front of the $L^p$ norm is included to more conveniently renormalise the Euler-Lagrange equations we will obtain. \\

\noindent The standard approach to show existence of minimisers is the direct method, which allows us to take a minimising sequence for the functional $\mathcal{M}_p^{\sigma} [\,\bigcdot\, ; f_0 ]$ and obtain convergence in an appropriate sense to a minimiser $f_p$ (see e.g. the book \cite{rindler2018calculus} for an introduction). However, when trying to show coercivity we may run into problems because it is not possible to bound the $W^{2,p}$ norm of $f$ in terms of the $L^p$ norm of $H$ without also having a lower bound on the graph radius of $f$, and this is not something we possess a priori. To get around this we employ the results of \cite{kuwert2015two}, which are invaluable to the work of this paper.

\begin{thm}[Theorems 1.1 and 1.2 of \cite{kuwert2015two}]\label{thm:KLL}
Let $\Sigma$ be a closed surface and let the number $p \in (2,\infty)$ be fixed. Consider a sequence of immersions $(f_k) \subset W^{2,p}(\Sigma,\mathbb{R}^n)$ which is assumed to satisfy the condition $0 \in f_k(\Sigma)$ for all $k \in \mathbb{N}$ and the inequalities
\begin{equation}\label{eq:Wpineq}
\left( \int_\Sigma \left( \frac{1}{4} + H_k^2 \right)^{\frac{p}{2}} \, \mathrm{d}\mu_k \right)^{\frac{1}{p}} \leq C
\end{equation}
and
\begin{equation}\label{eq:Willmoreineq}
\liminf_k \int_\Sigma \vert H_k \vert^2 \, \mathrm{d}\mu_k < 8 \pi,
\end{equation}
where $H_k$ and $\mu_k$ denote the mean curvature and surface measure of $f_k$ respectively. \\
After composition with diffeomorphisms $\varphi_k \in C^\infty(\Sigma,\Sigma)$ and taking a subsequence (not explicitly labelled), the sequence $(f_k \circ \varphi_k)$ converges weakly in $W^{2,p}(\Sigma,\mathbb{R}^n)$ to an immersion $f \in W^{2,p}(\Sigma,\mathbb{R}^n)$ with mean curvature $H$ and surface measure $\mu$ satisfying the inequality
\[
\left(\int_\Sigma \left( \frac{1}{4} + H^2 \right)^{\frac{p}{2}} \, \mathrm{d}\mu \right)^{\frac{1}{p}} \leq \liminf_k \left( \int_\Sigma \left( \frac{1}{4} + H_k^2 \right)^{\frac{p}{2}} \, \mathrm{d}\mu_k \right)^{\frac{1}{p}} .
\]
Defining the real number $r_k$ as the infimum across all points $x \in \Sigma$ of the maximal radius over which $f_k$ can be represented as a graph over the tangent plane $T_{f_k(x)}f_k(\Sigma)$, the inequality
\begin{equation}\label{eq:graphrad}
\liminf_k r_k > 0
\end{equation}
holds. \\
If $f$ is a critical point of the functional 
\[
f \mapsto \left(\int_\Sigma \left( \frac{1}{4} + H^2 \right)^{\frac{p}{2}} \, \mathrm{d}\mu \right)^{\frac{1}{p}}
\]
then $f$ is real analytic.
\end{thm}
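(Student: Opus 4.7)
The attribution of this result to Kuwert--Li--Li means one is sketching a standard program rather than devising a novel proof; the plan is to decompose the statement into three logically independent pieces: (a) the uniform lower bound \eqref{eq:graphrad} on the graph radius; (b) extraction of a subsequence, reparametrised by diffeomorphisms $\varphi_k$, that converges weakly in $W^{2,p}(\Sigma,\mathbb{R}^n)$, together with lower semicontinuity of the $L^p$ curvature integral; and (c) real-analyticity of critical points of the functional. The strict gap below $8\pi$ in \eqref{eq:Willmoreineq} is indispensable for (a) and (b), while (c) is essentially independent elliptic regularity.

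The main obstacle is (a). The hypothesis \eqref{eq:Wpineq} immediately yields control of the area $\mathcal{A}[f_k]$ (by integrating the $\tfrac{1}{4}$ summand) and of $\normm{H_k}_{L^p}$; combined with Gauss--Bonnet and the fixed Euler characteristic $\chi(\Sigma)$, this propagates to $L^p$ control of the full second fundamental form. For $p>2$ the idea is that around every point $x\in\Sigma$ the Gauss map has small Morrey-type oscillation (H\'elein / De Lellis--M\"uller type estimate), so in coordinates aligned with $T_{f_k(x)}f_k(\Sigma)$ the surface is locally the graph of a $W^{2,p}$ height function over a disk whose radius could only collapse if energy concentrates at a point. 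Simon's monotonicity formula together with the Li--Yau inequality then shows that any such collapse would force a concentration of at least $8\pi$ Willmore energy at the collapse point (corresponding to a limit multiplicity $\geq 2$), contradicting \eqref{eq:Willmoreineq} and supplying the required uniform lower bound. This is the step where the \emph{strict} inequality below $8\pi$ is crucial; equality would permit an isolated second sheet or a bubble.

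With (a) in hand, part (b) becomes relatively standard. Cover $\Sigma$ by finitely many charts of size comparable to the uniform graph radius, parametrise each $f_k$ as a graph in each chart with $W^{2,p}$-bounded height function, and patch these local parametrisations via smooth diffeomorphisms $\varphi_k$ of $\Sigma$ so that $f_k\circ\varphi_k$ has globally bounded $W^{2,p}(\Sigma,\mathbb{R}^n)$ norm; weak compactness plus Rellich then yields an immersion limit $f$. Lower semicontinuity of $\bigl(\int (\tfrac14 + H^2)^{p/2}\,\mathrm{d}\mu\bigr)^{1/p}$ follows in each chart from weak $L^p$ convergence of $H$ (a quasilinear second-order operator in the height function), strong $W^{1,p}$ convergence of the first fundamental form, and convexity of $t\mapsto(\tfrac14+t^2)^{p/2}$. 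Finally, for (c), at a critical point the local graph representation satisfies a nonlinear fourth-order elliptic equation whose principal part is a perturbation of $\Delta^2$ and whose coefficients are real-analytic functions of $(u,Du,D^2u)$; $L^q$-Schauder bootstrapping gives $C^\infty$ regularity, and Morrey's theorem on analytic regularity for nonlinear analytic elliptic systems upgrades this to real-analyticity.
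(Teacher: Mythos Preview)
The paper does not prove this theorem at all: it is quoted verbatim as Theorems~1.1 and~1.2 of \cite{kuwert2015two}, with the only added remark being that inequality~\eqref{eq:graphrad} is extracted from the proof of Theorem~1.2 there (their inequality~(4.6)). So there is no ``paper's own proof'' to compare against; your proposal is really a sketch of the argument in the cited source, and you correctly flag this at the outset.

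As a sketch of that argument your outline is sound. The decomposition into (a) uniform graph radius via Li--Yau and monotonicity, (b) local $W^{2,p}$ graph parametrisation plus weak compactness and lower semicontinuity, and (c) bootstrap to real-analyticity via Morrey, matches the structure of \cite{kuwert2015two}. Two small points: the cited paper is by Kuwert and Li (two authors, not three), and in (a) the role of the $\tfrac14$ term is slightly more than just area control---it also gives a uniform diameter bound via Simon's lemma, which is needed to keep the graph charts finite in number. Otherwise your summary is an accurate account of why the result holds.
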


\noindent Inequality \eqref{eq:graphrad} is not explicitly formulated in the main theorems of \cite{kuwert2015two}, but can be found in the proof of Theorem 1.2, cf. inequality (4.6) there. \\
\noindent For convenience of notation, when we apply Theorem \ref{thm:KLL} in this paper we will not make the subsequential convergence or the composition with diffeomorphisms explicit. \\

\noindent The bound on the Willmore energy \eqref{eq:Willmoreineq} is necessary to prevent topological problems from arising, as illustrated by the following example \cite{kuwert2015two}: take two round spheres connected by a catenoidal neck ($H \equiv 0$) which shrinks and becomes thinner as $n \rightarrow \infty$ with width tending to zero. In this example we do not have convergence to a limiting immersion of $\Sigma$; the problem is that the the surface, topologically a sphere, is trying to `split up' in the limit into an immersion of two spheres. This example shows that the $8 \pi$ bound is sharp. Another demonstration of the importance of inequality \eqref{eq:Willmoreineq} can be found in \cite{blatt2009singular}, which constructs for arbitrary $\varepsilon > 0$ a surface with Willmore energy $8 \pi + \varepsilon$ whose Willmore flow develops singularities (possibly after infinite time). To see how inequality \eqref{eq:Willmoreineq} prevents topological issues in our sequence of surfaces, we note that for any immersion of a closed surface the inequality
\[
\int_\Sigma H^2 \, \mathrm{d}\mu \geq 4 \pi
\]
holds (this fact, proved by Willmore \cite{willmore1965note} \cite{willmore1968note}, comes from elementary arguments; for details see e.g. \cite{topping2000towards}). Hence a uniform Willmore energy bound of less than $8 \pi$ prevents our sequence of surfaces from trying to create multiple connected components, as each one would contribute at least $4 \pi$ to the energy. Note also that by a result from \cite{li1982new}, any immersion of a compact surface which is not embedded must have a Willmore energy of \textit{at least} $8\pi$, so the convergence result of Theorem \ref{thm:KLL} will not apply to a sequence of non-embedded surfaces. \\

\noindent We expect for surfaces of genus at least one that Proposition \ref{prop:sufexist} will not apply because of the failure of the inequality $\mathcal{M}_\infty[f] A^{\frac{1}{2}} < \sqrt{8\pi}$ (although we do not have a full proof that the bound must fail). It is this inequality that allows us to approximate our $\infty$-Willmore spheres by $\mathcal{M}_p^\sigma$ minimisers: whenever it is satisfied, H\"{o}lder's inequality implies that the Willmore energy of $f$ must be beneath $8 \pi$, and this allows us to use the $L^p$ approximation without the risk of the topology of the immersions changing as $p$ goes to infinity. On the other hand, when the above $\mathcal{M}_\infty[f]$ inequality is not satisfied, we can no longer guarantee that $f$ has Willmore energy beneath $8 \pi$ and we cannot apply Theorem \ref{thm:KLL}. Note that the problem when we attempt to apply our methods to surfaces of positive genus is not that the Willmore energy is too large for any immersion-- indeed, as mentioned in \cite{kuwert2015two}, it is possible to attain a Willmore energy strictly below $8\pi$ for a surface of any genus-- but rather the issue is that we cannot ensure (and do not believe in general) that either our $\mathcal{M}_\infty$ minimisers, or their approximating $\mathcal{M}_p$ minimisers when $p$ is large, will have Willmore energy less than $8 \pi$. Hence when we attempt to use the $L^p$ approximation, we cannot guarantee that the Willmore energy of the sequence of $\mathcal{M}_p$ minimisers remains below $8\pi$ as $p \rightarrow \infty$, even if the Willmore energy is low when $p=2$.  \\

\noindent Although our approach does not work for surfaces of positive genus, there is no strong reason to believe that a minimising sequence should fail to converge in the appropriate sense in this case. To give examples demonstrating cases where the $L^\infty$ bound fails, explicit calculations show that on any torus of revolution it does not hold, including the torus with central/tubular radii in the ratio $\sqrt{2}:1$ which achieves the infimum Willmore energy $2\pi^2$ among tori in $\mathbb{R}^3$. In \cite{scharrer2021embedded}, a family of embedded ``Delaunay Tori'' of piecewise constant mean curvature are constructed by joining together a nodoid (constant negative $H$) and an unduloid (constant positive $H$) in such a way to give rise to a $W^{2,\infty}$ surface. At one end of the family the Willmore energy is below $8\pi$, but the quantity $\normm{H}_{L^\infty}^2 A$ stays above $8 \pi$. \\

\noindent It is possible to prove the existence of minimisers of $\mathcal{M}_p^{\sigma} [ \,\bigcdot\, ; f_0 ]$ for any given $p \in (2,\infty)$ using Theorem \ref{thm:KLL}. For our purposes however it is desirable to have minimisers whose regularity we have more control over, and for this we need to construct the minimisers in a specific way. This is the focus of Section \ref{subsec:epsapprox}, in which we provide more detailed discussion on the how the $\mathcal{M}_p^{\sigma} [ \,\bigcdot\, ; f_0 ]$ minimisers are constructed. In Section \ref{subsec:epsconv} we prove rigorously that this construction does in fact lead to minimisers of $\mathcal{M}_p^{\sigma} [ \,\bigcdot\, ; f_0 ]$ of specific regularity. \\

\noindent We briefly remark on how the convergence of immersions in Theorem \ref{thm:KLL} interacts with the convergence of the mean curvature of the immersions.
\begin{rem}\label{rem:meancurvconv}
For a sequence $(f_k)$ converging to a limiting immersion $f$ in the weak $W^{2,p}$ sense of Theorem \ref{thm:KLL}, the sequence of mean curvatures $(H_k)$ converges weakly in $L^p$ to $H$, the mean curvature of the limit $f$.
\end{rem}
\noindent The validity of this remark can be seen via the following argument: over graphs $(x,y,u(x,y))$, the expression for the mean curvature
\[
H(x,y) = \frac{(1 + u_y^2)u_{xx} - 2 u_x u_y u_{xy} + (1 + u_x^2)u_{yy}}{2\left(1 + u_x^2 + u_y^2 \right)^{\frac{3}{2}}}
\]
is linear in the second derivatives of $u$, so the weak $W^{2,p}$ convergence $f_k \rightharpoonup f$ (which implies strong $C^{1,\beta}$ convergence) combined with the graph radius inequality \eqref{eq:graphrad} of Theorem \ref{thm:KLL} suffices to show the weak $L^p$ convergence of $H_k \rightharpoonup H$. \\

\noindent We now focus on computing the Euler-Lagrange equations associated to the functional $\mathcal{M}_p^{\sigma} [ \,\bigcdot\, ; f_0 ]$. To do this we need to work out the first variations of the various terms in $\mathcal{M}_p^{\sigma} [\,\bigcdot\, ; f_0]$. We follow closely the approach from \cite{willmore1982total}. The calculations are simplified considerably by the observation that all the terms in the functional $\mathcal{M}_p^{\sigma} [ f ; f_0 ]$ depend only on the image of $f$ and not its parametrisation, so for such terms it is sufficient to only consider variations normal to $f$. \\

\noindent  The main ingredients of the Euler-Lagrange equations are the following:
\begin{lem}\label{lem:ELcomps}
Let $f: \Sigma \times I \rightarrow \mathbb{R}^3$, $I = (-\varepsilon, \varepsilon)$ be a normal variation with velocity field $\partial_\varepsilon \bigr|_{\varepsilon = 0} f = \phi \nu$ where $\phi$ is a given scalar test function. Then the following equations hold:
\begin{align}
\partial_\varepsilon \bigr|_{\varepsilon = 0} \sqrt{\det g} &= - 2 \phi H \sqrt{\det g} \label{eq:firstvar1}, \\
\partial_\varepsilon \bigr|_{\varepsilon = 0}\, H &= \frac{1}{2} \Delta \phi + \phi(2H^2 - K) \label{eq:firstvar2}, \\
\partial_\varepsilon \bigr|_{\varepsilon = 0}\, \xi &= \phi D_\nu \xi \label{eq:firstvar3}, \\
\partial_\varepsilon \bigr|_{\varepsilon = 0}\, d(f(x))^2 &= 2 \phi d D_\nu d. \label{eq:firstvar4}
\end{align}
\end{lem}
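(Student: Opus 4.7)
These four identities are classical first-variation formulas; my approach is to verify each by a local-coordinate calculation exploiting the orthogonality $\nu \cdot \partial_i f = 0$ and the fact that the variation vector is purely normal, $\partial_\varepsilon f\bigr|_{\varepsilon=0} = \phi \nu$.

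Identities \eqref{eq:firstvar3} and \eqref{eq:firstvar4} fall out immediately from the chain rule: differentiating $\xi \circ f$ (respectively $d \circ f$, then squaring) contracts the ambient gradient with the normal variation vector $\phi\nu$, giving exactly $\phi D_\nu \xi$ and $2\phi d D_\nu d$. Smoothness of $\xi$ (by \eqref{cond:alph1}, away from a null set) and of $d$ is sufficient justification.

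For \eqref{eq:firstvar1} I start from $g_{ij} = \partial_i f \cdot \partial_j f$. Since $\partial_\varepsilon \partial_i f = \phi_i\, \nu + \phi\, \partial_i \nu$, dotting with $\partial_j f$ and using $\nu \cdot \partial_j f = 0$ together with the Weingarten relation $\partial_i \nu \cdot \partial_j f = -h_{ij}$ yields $\partial_\varepsilon g_{ij} = -2\phi h_{ij}$. Combining this with Jacobi's formula $\partial_\varepsilon \sqrt{\det g} = \tfrac{1}{2}\sqrt{\det g}\, g^{ij}\partial_\varepsilon g_{ij}$ and the trace identity $g^{ij}h_{ij} = 2H$ (consistent with the convention $H=(\kappa_1+\kappa_2)/2$ adopted in the paper) delivers \eqref{eq:firstvar1}.

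The main obstacle, and the only step with genuine content, is \eqref{eq:firstvar2}. I would write $H = \tfrac{1}{2}g^{ij}h_{ij}$ and vary both factors: $\partial_\varepsilon g^{ij} = 2\phi h^{ij}$ follows from the previous step by differentiating $g^{ik}g_{kj} = \delta^i_j$. To vary $h_{ij} = \nu \cdot \partial_i \partial_j f$ I first need $\partial_\varepsilon \nu$; the constraints $|\nu|^2 = 1$ and $\nu \cdot \partial_i f = 0$ force $\partial_\varepsilon \nu$ to be tangential and equal to $-g^{kl}\phi_l\, \partial_k f$. Substituting this, and decomposing $\partial_i \partial_j f$ into tangential and normal pieces via the Gauss formula $\partial_i \partial_j f = \Gamma_{ij}^k \partial_k f + h_{ij}\nu$, the tangential part of $\partial_\varepsilon \nu$ supplies precisely the Christoffel correction needed to turn the bare $\partial_i \partial_j \phi$ term into the covariant Hessian of $\phi$, while the remainder is a quadratic-in-$h$ term $-\phi\, h_i^{\,k}h_{jk}$. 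Tracing with $g^{ij}$ then produces $\tfrac{1}{2}\Delta \phi$ (the intrinsic Laplace--Beltrami operator) plus a multiple of $\phi$ whose coefficient I reduce using the Cayley--Hamilton-type identity for the $2\times 2$ Weingarten map, $g^{ij}h_i^{\,k}h_{jk} = g^{ij}h^{kl}h_{ik}h_{jl} = |A|^2 = \kappa_1^2 + \kappa_2^2 = 4H^2 - 2K$. Cleaning up yields exactly $\tfrac{1}{2}\Delta\phi + \phi(2H^2 - K)$, as required. The most delicate point is the bookkeeping that separates tangential from normal contributions in $\partial_\varepsilon(\nu \cdot \partial_i \partial_j f)$ and checks that the partial-derivative Hessian together with the Christoffel terms from $\partial_\varepsilon \nu$ correctly assembles into $\Delta$.
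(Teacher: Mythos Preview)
Your proposal is correct and essentially matches the paper's treatment: the paper does not give a detailed argument but simply cites Willmore's textbook for \eqref{eq:firstvar1}--\eqref{eq:firstvar2} and remarks that \eqref{eq:firstvar3}--\eqref{eq:firstvar4} are direct computations. Your local-coordinate derivation is precisely the standard calculation underlying those references, and the bookkeeping for \eqref{eq:firstvar2} (the variation of $\nu$, the Gauss decomposition producing the Christoffel correction, and the reduction $|A|^2 = 4H^2 - 2K$) is handled correctly.
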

\noindent Here we have denoted the unit normal vector field to $f(\Sigma)$ by $\nu$, and in equations \eqref{eq:firstvar3}--\eqref{eq:firstvar4} the operator $D_\nu$ denotes the (Euclidean) directional derivative in the direction of $\nu$. \\

\noindent Equations \eqref{eq:firstvar1}--\eqref{eq:firstvar2} can be found in the book \cite{willmore1982total}, while equations \eqref{eq:firstvar3}--\eqref{eq:firstvar4} come from direct computation. \\

\noindent The following proposition, which expresses the Euler-Lagrange equations for the functional $\mathcal{M}_p^{\sigma}[\,\bigcdot\, ; f_0]$, is derived from Lemma \ref{lem:ELcomps} via standard computations which we omit here.
\begin{prop}\label{prop:pELeqs}
Let the immersion $f_p$ be a critical point of $\mathcal{M}_p^{\sigma}[\,\bigcdot\, ; f_0]$. Let $\Delta_p$ denote the Laplace-Beltrami operator on $f_p(\Sigma)$ and denote by $\xi_p$ the composition $\xi \circ f_p$ of the weight function with $f_p$ (that is, $\xi_p$ is the weight function evaluated on the surface $f_p(\Sigma)$). Let $H_p$ and $K_p$ denote the mean and Gaussian curvatures of $f_p$ respectively. Set
\[
h_p = A^{-\frac{1}{p}} \normm{\xi_p H_p}_{L^p}
\]
and define the functions
\[
w_p = h_p^{1-p} \xi_p^p \vert H_p \vert^{p-2} H_p
\]
and
\[
Q_p = 2 \frac{p-1}{p} H_p^2 - K_p + \frac{H_p D_{\nu_p} \xi_p}{\xi_p}.
\]
Let $d_p(x) = d(f_p(x))$ where $d$ is any given regularisation of the distance function from $f_0(\Sigma)$, as detailed in Section \ref{subsec:penterm}. Then the Euler-Lagrange equations for $f_p$ are given by
\begin{equation}\label{eq:pEL}
\frac{1}{2} \Delta_p w_p + Q_p w_p = \lambda_p H_p - \sigma d_p D_{\nu_p} d_p + \sigma d_p^2 H_p
\end{equation}
where the equation is to be interpreted in the weak sense, and the real number $\lambda_p$ is the Lagrange multiplier arising to account for the area constraint $\mathcal{A}[f_p] = A$.
\end{prop}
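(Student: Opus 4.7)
Since every term in $\mathcal{M}_p^\sigma[\,\bigcdot\,; f_0]$ as well as the area constraint depends only on the image of $f$, it suffices to test criticality against normal variations $f_\varepsilon = f_p + \varepsilon \phi\, \nu_p$ where $\phi$ is a smooth scalar test function on $\Sigma$. The plan is to differentiate each term of $\mathcal{M}_p^\sigma$ at $\varepsilon = 0$ using Lemma \ref{lem:ELcomps}, attach a Lagrange multiplier $\lambda_p$ for the constraint $\mathcal{A}[f] = A$, reorganise everything into the form $\int_\Sigma \phi(\cdots)\,d\mu_p = 0$, and identify the bracketed expression with \eqref{eq:pEL} by the fundamental lemma of the calculus of variations.

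The bulk of the work concerns the quantity $I := \int_\Sigma \xi_p^p |H_p|^p\,d\mu_p$. Since $p > 2$, the map $t \mapsto |t|^p$ is $C^1$ with derivative $p|t|^{p-2} t$, so the chain rule legitimately yields $\partial_\varepsilon |H_p|^p = p|H_p|^{p-2} H_p\, \partial_\varepsilon H_p$. Combining \eqref{eq:firstvar1}--\eqref{eq:firstvar3} then produces three contributions to $\partial_\varepsilon I$: the piece $p\phi\, \xi_p^{p-1} |H_p|^p D_{\nu_p} \xi_p$ coming from $\partial_\varepsilon \xi_p$; the piece $p\, \xi_p^p |H_p|^{p-2} H_p \bigl(\tfrac12 \Delta_p \phi + \phi(2H_p^2 - K_p)\bigr)$ coming from $\partial_\varepsilon H_p$; and the piece $-2\phi\, \xi_p^p |H_p|^p H_p$ coming from the varying area element. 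Integration by parts (valid since $\Sigma$ has no boundary) moves the Laplacian onto $\xi_p^p |H_p|^{p-2} H_p = h_p^{p-1} w_p$, and the algebraic identity $\xi_p^p |H_p|^p = h_p^{p-1} w_p H_p$ renormalises the remaining two contributions so that the common prefactor $p h_p^{p-1}$ factors out, giving
\[
\partial_\varepsilon\bigl|_{\varepsilon=0} I = p h_p^{p-1} \int_\Sigma \phi\, \Bigl(\tfrac12 \Delta_p w_p + Q_p w_p\Bigr)\,d\mu_p,
\]
where $Q_p$ emerges as $2H_p^2 - K_p - \tfrac{2}{p} H_p^2 + H_p D_{\nu_p} \xi_p / \xi_p = 2\tfrac{p-1}{p} H_p^2 - K_p + H_p D_{\nu_p} \xi_p / \xi_p$ after merging the $-2 H_p^2$ piece (rescaled by $1/p$) with the $2H_p^2 - K_p$ piece. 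Since $\partial_\varepsilon h_p = (pA h_p^{p-1})^{-1} \partial_\varepsilon I$, the prefactor cancels and I obtain
\[
\partial_\varepsilon \bigl(A^{-1/p} \|\xi_p H_p\|_{L^p}\bigr) = \tfrac{1}{A} \int_\Sigma \phi\,\Bigl(\tfrac12 \Delta_p w_p + Q_p w_p\Bigr)\,d\mu_p.
\]

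The remaining pieces are immediate: \eqref{eq:firstvar4} combined with \eqref{eq:firstvar1} yields $\partial_\varepsilon \mathcal{P}^\sigma = \tfrac{\sigma}{A}\int_\Sigma \phi\bigl(d_p D_{\nu_p} d_p - d_p^2 H_p\bigr)\,d\mu_p$, while \eqref{eq:firstvar1} on its own gives $\partial_\varepsilon \mathcal{A} = -2\int_\Sigma \phi H_p\,d\mu_p$. Setting $\partial_\varepsilon \mathcal{M}_p^\sigma = \tilde\lambda\, \partial_\varepsilon \mathcal{A}$ for every $\phi$, clearing the common factor $1/A$ and relabelling $\lambda_p := -2A\tilde\lambda$, then delivers \eqref{eq:pEL} in the weak sense. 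I expect no deep obstacle here; the only genuine difficulty is the algebraic bookkeeping of the $p$-dependent prefactors and the cancellation between the $-2\xi_p^p |H_p|^p H_p$ coming from the area element and the $2H_p^2$ inside $\partial_\varepsilon H_p$. Pulling out the factor $p h_p^{p-1}$ before integrating by parts is precisely what makes the final expression collapse into the stated form.
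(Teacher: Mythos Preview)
Your computation is correct and follows exactly the route the paper has in mind: the paper itself does not write out the proof, stating only that the proposition ``is derived from Lemma \ref{lem:ELcomps} via standard computations which we omit here.'' You have carried out precisely those computations, and the algebraic bookkeeping (factoring out $p h_p^{p-1}$, the identification $\xi_p^p |H_p|^{p-2} H_p = h_p^{p-1} w_p$, the cancellation producing $Q_p$, and the final relabelling of the Lagrange multiplier) is all accurate.
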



\subsection{Approximation of $\mathcal{M}_p^{\sigma} [ \,\bigcdot\, ; f_0 ]$ Minimisers}\label{subsec:epsapprox}

\noindent In the previous subsection we derived the Euler-Lagrange equation \eqref{eq:pEL} governing critical points (in particular minimisers) of $\mathcal{M}_p^{\sigma} [ \,\bigcdot\, ; f_0 ]$. The presence of the Laplace-Beltrami operator applied to the function $w_p$ in \eqref{eq:pEL} suggests that $w_p$ should somehow possess two weak derivatives, although a priori all we can say is that $w_p$ lies in the space $L^{p/(p-1)}$ (this integrability follows immediately from the definition of $w_p$ and the assumption of finite energy for $\mathcal{M}_p^{\sigma} [ \,\bigcdot\, ; f_0 ]$). For our method of $L^p$ approximation to work, we will need more regularity than this, and so we introduce another functional $\mathcal{M}_p^{\sigma,\varepsilon}[ \,\bigcdot\, ;f_0]$ to approximate the $\mathcal{M}_p^{\sigma} [ \,\bigcdot\, ; f_0 ]$ functional. The regularity of critical points of the former can be established from prior results, and in Section \ref{subsec:epsconv} we show that as we let the parameter $\varepsilon$ tend to zero we have convergence to critical points (minimisers) of $\mathcal{M}_p^{\sigma} [ \,\bigcdot\, ; f_0 ]$ in the appropriate sense to give us the desired regularity of the function $w_p$ from \eqref{eq:pEL}. \\

\noindent Specifically, we define the $\mathcal{M}_p^{\sigma,\varepsilon}[ \,\bigcdot\, ;f_0]$ functional by
\[
\mathcal{M}_p^{\sigma,\varepsilon}[ f ;f_0] := A^{-\frac{1}{p}} \left( \int_\Sigma \left( (\xi H)^2 + \varepsilon \right)^{\frac{p}{2}} \,\mathrm{d}\mu \right)^{\frac{1}{p}} + \mathcal{P}^\sigma[f;f_0],
\]
and the set of admissible immersions is given by 
\[
\{ f \in W^{2,p}(\Sigma, \mathbb{R}^3) \,:\, \mathcal{A}[f] = A  \}.
\]

\noindent To show that minimisers of $\mathcal{M}_p^{\sigma,\varepsilon}[ \,\bigcdot\, ;f_0]$ exist, we appeal to Theorem \ref{thm:KLL}.
\begin{lem}\label{lem:epsminexist}
Let $p \in (2,\infty)$ and $\sigma > 0$ be given and let $f_0$ be any $\infty$-Willmore surface satisfying the low-energy inequality in Theorem \ref{thm:mainthm}. There exists a constant $\tilde{\varepsilon} > 0$ independent of $p$ such that for all $\varepsilon < \tilde{\varepsilon}$, at least one minimiser of the functional $\mathcal{M}_p^{\sigma,\varepsilon}[ \,\bigcdot\, ;f_0]$ exists.
\end{lem}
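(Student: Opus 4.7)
The plan is a direct-method argument with Theorem \ref{thm:KLL} supplying the needed compactness. Fix $p \in (2,\infty)$, let $(f_k)$ be a minimising sequence for $\mathcal{M}_p^{\sigma,\varepsilon}[\,\bigcdot\,; f_0]$ in the admissible class, and use $f_0$ itself (which satisfies $\mathcal{P}^\sigma[f_0; f_0] = 0$) as a competitor to obtain
\[
\mathcal{M}_p^{\sigma,\varepsilon}[f_k;f_0] \leq \mathcal{M}_p^{\sigma,\varepsilon}[f_0;f_0] + o(1) \leq \left( \normm{\xi H_0}_{L^\infty}^2 + \varepsilon \right)^{1/2} + o(1),
\]
by the pointwise estimate $((\xi_0 H_0)^2+\varepsilon)^{p/2} \leq (\normm{\xi H_0}_{L^\infty}^2+\varepsilon)^{p/2}$ and $\mathcal{A}[f_0] = A$. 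Combined with $\xi_k \geq 1$, this upper bound yields the bound \eqref{eq:Wpineq} of Theorem \ref{thm:KLL} at fixed $p$, via the pointwise inequality $(1/4 + H_k^2)^{p/2} \leq 2^{p/2-1}((1/4)^{p/2} + ((\xi_k H_k)^2+\varepsilon)^{p/2})$.

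To establish the Willmore bound \eqref{eq:Willmoreineq}, I would apply Jensen's inequality (monotonicity of the $L^p(\mu_k)/A^{1/p}$ averages for $p \geq 2$) to the integrand $((\xi_k H_k)^2 + \varepsilon)^{1/2}$ and use $\xi_k \geq 1$ to get
\[
A^{-1/p} \left( \int_\Sigma ((\xi_k H_k)^2 + \varepsilon)^{p/2}\, d\mu_k \right)^{1/p} \geq A^{-1/2} \left( \int_\Sigma H_k^2\, d\mu_k + \varepsilon A \right)^{1/2}.
\]
Chaining with the previous upper bound and squaring, the $\varepsilon$-contributions cancel, yielding $\int_\Sigma H_k^2\, d\mu_k \leq A\normm{\xi H_0}_{L^\infty}^2 + o(1)$, which lies strictly below $8\pi$ by the low-energy hypothesis on $f_0$ with a margin independent of $p$. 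Hence \eqref{eq:Willmoreineq} holds, and $\tilde{\varepsilon}$ can be chosen independently of $p$ (any $\tilde{\varepsilon}$ for which $A(\normm{\xi H_0}_{L^\infty}^2 + \tilde{\varepsilon})$ stays below $8\pi$ suffices to preserve the strict inequality in all the estimates above). Theorem \ref{thm:KLL} now applies, producing---after composition with diffeomorphisms, passage to a subsequence, and a harmless translation to arrange $0 \in f_k(\Sigma)$, which is legitimate because the penalisation $\mathcal{P}^\sigma$ confines the sequence to a bounded region---a weak $W^{2,p}$ limit $f$ which is an immersion; strong $C^{1,\beta}$ convergence ensures $\mathcal{A}[f] = A$, so $f$ is admissible.

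It remains to verify lower semicontinuity of $\mathcal{M}_p^{\sigma,\varepsilon}[\,\bigcdot\,;f_0]$ along the sequence. The penalisation $\mathcal{P}^\sigma[\,\bigcdot\,;f_0]$ is continuous, as $d$ is continuous and the area forms converge uniformly. For the curvature term, Remark \ref{rem:meancurvconv} gives weak $L^p$ convergence of $H_k$ to $H$; combined with the uniform convergence $\xi \circ f_k \to \xi \circ f$ (from continuity of $\xi$ and $C^0$ convergence of $f_k$), this upgrades to weak $L^p$ convergence of $\xi_k H_k$ to $\xi H$. Convexity of $t \mapsto (t^2+\varepsilon)^{p/2}$, together with uniform convergence of the area densities $\sqrt{\det g_k}$, then yields $\int_\Sigma ((\xi H)^2 + \varepsilon)^{p/2}\, d\mu \leq \liminf_k \int_\Sigma ((\xi_k H_k)^2 + \varepsilon)^{p/2}\, d\mu_k$, establishing lower semicontinuity. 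Hence $f$ realises the infimum, proving the lemma. The main technical point is obtaining \eqref{eq:Willmoreineq} with a threshold $\tilde{\varepsilon}$ independent of $p$; Jensen's inequality resolves this cleanly thanks to the $p$-uniform descent from the $L^p$ to $L^2$ averaged norms and the fortuitous cancellation of the $\varepsilon$-terms, leaving only the low-energy hypothesis on $f_0$ to do the work.
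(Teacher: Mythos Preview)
Your argument is correct and follows the same direct-method-plus-Theorem~\ref{thm:KLL} strategy as the paper, with a few differences worth noting. Your route to the Willmore bound \eqref{eq:Willmoreineq} is sharper: by applying Jensen to $((\xi_k H_k)^2+\varepsilon)^{1/2}$ rather than to $\xi_k H_k$ alone, you retain an $\varepsilon A$ term in the lower bound which then cancels exactly against the $\varepsilon$ in the upper bound, yielding $\int H_k^2\,\mathrm{d}\mu_k \leq A\normm{\xi H_0}_{L^\infty}^2 + o(1)$ independently of $\varepsilon$; so in fact \emph{any} $\tilde\varepsilon>0$ works and your parenthetical constraint is unnecessary. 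The paper instead drops the $\varepsilon$ in the lower bound (via $\mathcal{M}_p^\sigma \leq \mathcal{M}_p^{\sigma,\varepsilon}$) and thus must impose the explicit threshold $A(\normm{\xi_0 H_0}_{L^\infty}^2+\tilde\varepsilon)=8\pi$. For the translation issue your phrase ``confines the sequence to a bounded region'' is slightly loose: what the penalisation bound gives directly is only that \emph{some} point of $f_k(\Sigma)$ lies in a fixed ball (since $\int d(f_k)^2\,\mathrm{d}\mu_k$ bounded forces $\min_x d(f_k(x))$ bounded), and one then needs the diameter control from Theorem~\ref{thm:KLL} applied to the translated sequence to upgrade this to full confinement---this is exactly the paper's argument in the case $\sigma\neq 0$, compressed. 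The paper additionally treats the case $\sigma=0$ (using condition \eqref{cond:alph3} on $\xi$), which is not required for the lemma as stated but is invoked later in the proof of Proposition~\ref{prop:sufexist}. Finally, for lower semicontinuity you argue directly via convexity of $t\mapsto (t^2+\varepsilon)^{p/2}$ and uniform convergence of the area densities, whereas the paper simply cites the lower-semicontinuity conclusion built into Theorem~\ref{thm:KLL}; both are valid.
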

\begin{proof}
If $f_0$ is a minimiser of $\mathcal{M}_p^{\sigma,\varepsilon}[ \,\bigcdot\, ;f_0]$, the Lemma automatically holds. Else, we apply Theorem \ref{thm:KLL} directly. Although Theorem \ref{thm:KLL} is written with the constant $1/4$ in, there is nothing special about this number and the obvious reformulation of the theorem applies when $1/4$ is replaced by any strictly positive constant (strict positivity is vital here). Let $\varepsilon > 0$ be given and take $(f_k)$ to be a minimising sequence for the functional $\mathcal{M}_p^{\sigma,\varepsilon}[ \,\bigcdot\, ;f_0]$. \\
We first check that the inequalities of Theorem \ref{thm:KLL} are satisfied. From the definition of the $\mathcal{M}_p^{\sigma,\varepsilon}[ \,\bigcdot\, ;f_0]$ functional, it is clear that the reformulation of inequality \eqref{eq:Wpineq}
\[
\left( \int_\Sigma \left( \varepsilon + H_k^2 \right)^{\frac{p}{2}} \, \mathrm{d}\mu_k \right)^{\frac{1}{p}} \leq C
\]
holds. Also, when $k$ is large enough that $\mathcal{M}_p^{\sigma,\varepsilon} [f_k ; f_0 ] < \mathcal{M}_p^{\sigma,\varepsilon} [f_0 ; f_0 ]$ (which must happen eventually since we are assuming $f_0$ is not a minimiser) we have the chain of inequalities
\begin{align*}
\normm{H_k}_{L^2} &\leq \normm{\xi_k H_k}_{L^2} \\
&\leq A^{\frac{1}{2} - \frac{1}{p}} \normm{\xi_k H_k}_{L^p} \\
&\leq A^{\frac{1}{2}} \mathcal{M}_p^\sigma [f_k ; f_0 ] \\
&\leq A^{\frac{1}{2}} \mathcal{M}_p^{\sigma,\varepsilon} [f_k ; f_0 ] \\
&\leq A^{\frac{1}{2}} \mathcal{M}_p^{\sigma,\varepsilon} [f_0 ; f_0 ] \\
&\leq \norm{\sqrt{(\xi_0 H_0)^2 + \varepsilon}\,}_{L^\infty} A^{\frac{1}{2}} 
\end{align*}
having used H\"{o}lder's inequality in the second and final lines and assumption \eqref{cond:alph1} on the weight function in the first line. By the assumptions in the statement of the Lemma the inequality
\[
\normm{\xi H}_{L^\infty} A^{\frac{1}{2}} < \sqrt{8\pi}
\]
is satisfied, and so we can define $\tilde{\varepsilon}$ so that 
\[
\norm{\sqrt{(\xi_0 H_0)^2 + \tilde{\varepsilon}}\,}_{L^\infty}A^{\frac{1}{2}}  = \sqrt{8 \pi}.
\]
For all $\varepsilon < \tilde{\varepsilon}$, it follows that we have the inequality $\normm{H_k}_{L^2} < \sqrt{8 \pi} - \delta$ for some small $\delta > 0$, and upon taking the liminf we find that inequality \eqref{eq:Willmoreineq} holds for the sequence $(f_k)$. Translating every element of the sequence to ensure that the condition $0 \in f_k(\Sigma)$ of Theorem \ref{thm:KLL} is satisfied, we obtain the weak $W^{2,p}$ convergence of $(f_k - f_k(x_0))$ to some limiting immersion where $x_0 \in \Sigma$ can be taken as any point. This means that we have the weak $W^{2,p}$ convergence ``of the shape of $(f_k)$'' and in particular the diameters $\textnormal{diam}(f_k(\Sigma))$ of $f_k(\Sigma)$ converge as a sequence of reals (this comes from the strong $C^0$ convergence). \\ 
\noindent We now argue that in fact, we have convergence of the untranslated sequence $(f_k)$ as well. It is sufficient to show that there exists a ball $B_R(0)$ of some radius $R > 0$ such that infinitely many of the immersions $f_{k}$ intersect $B_R(0)$. We consider two cases: the case where $\sigma \neq 0$, i.e. the penalisation term is present in our functional, and the case where $\sigma = 0$. In both cases we argue by contradiction, assuming that for all $R > 0$ a subsequence (not explicitly labelled) of $(f_{k})$ eventually stays outside of $B_R(0)$. \\
When $\sigma \neq 0$, let $r$ be such that $f_0(\Sigma) \subset B_r(0)$. We know from \eqref{eq:distineqs} and our assumption that $(f_{k})$ `translates to infinity' that for any given $R$ and large enough $k$,
\begin{align*}
\mathcal{M}_p^\sigma [ f_{k} ; f_0 ] &\geq \frac{\sigma}{2AC} \int_\Sigma \textnormal{dist}(f_k(x),f_0(\Sigma))^2 \, \mathrm{d}\mu \\
&\geq \frac{\sigma}{2AC} \int_\Sigma (R - r)^2 \, \mathrm{d}\mu \\
&= \frac{(R - r)^2}{2C},
\end{align*}
and taking $R$ arbitrarily large implies that $\mathcal{M}_p^\sigma [ f_{k} ; f_0 ]$ must tend to $\infty$, contradicting the fact that $(f_{k})$ is a minimising sequence for $\mathcal{M}_p^{\sigma,\varepsilon} [ \,\bigcdot\, ; f_0 ]$. \\
If instead $\sigma = 0$, we make use of assumption \eqref{cond:alph3} on our weight function. By the aforementioned convergence of the diameters of $(f_k)$, we can  take $r > 0$ such that each immersion $f_{k}$ fits inside a ball of radius $r$, with no assumption on where the centres of such balls lie. By condition \eqref{cond:alph3}, we may choose $R$ so large that for all $y_1 \in B_r(0)$ and $y_2 \in \mathbb{R}^3 \backslash B_R(0)$, the inequality
\[
\xi(y_1) \leq 2 \xi(y_2)
\]
is satisfied. This again contradicts the choice of $(f_{k})$ as a minimising sequence for $\mathcal{M}_p^{\sigma,\varepsilon}[\,\bigcdot\, ; f_0]$, because by translating each $f_{k}$ so that it lies inside $B_r(0)$ instead of outside $B_R(0)$ the value of $\mathcal{M}_p^{\sigma,\varepsilon}[f_k ; f_0]$ would decrease by at least a fixed amount. Letting $\xi_{k,1}$ denote the weight function applied to the untranslated immersion (i.e. the one outside $B_R(0)$) and $\xi_{k,2}$ denote the weight function applied to the translated immersion, we see that
\begin{align*}
\int_\Sigma \left( ( \xi_{k,1} H_k )^2 + \varepsilon \right)^{\frac{p}{2}} \, \mathrm{d}\mu_k &\geq \int_\Sigma \left( ( 2\xi_{k,2} H_k \right)^2 + \varepsilon)^{\frac{p}{2}} \, \mathrm{d}\mu_k \\
&\geq \int_\Sigma \left( ( \xi_{k,2} H_k )^2 + \varepsilon \right)^{\frac{p}{2}} + \vert \xi_{k,2} H_k \vert^p  \, \mathrm{d}\mu_k \\
&= \int_\Sigma \left( ( \xi_{k,2} H_k )^2 + \varepsilon \right)^{\frac{p}{2}} \, \mathrm{d}\mu_k + \int_\Sigma \vert \xi_{k,2} H_k \vert^p  \, \mathrm{d}\mu_k \\
&\geq \int_\Sigma \left( ( \xi_{k,2} H_k )^2 + \varepsilon \right)^{\frac{p}{2}} \, \mathrm{d}\mu_k + \int_\Sigma \vert H_k \vert^p  \, \mathrm{d}\mu_k \\
&\geq \int_\Sigma \left( ( \xi_{k,2} H_k )^2 + \varepsilon \right)^{\frac{p}{2}} \, \mathrm{d}\mu_k + A^{1 - \frac{p}{2}} \left( \int_\Sigma \vert H_k \vert^2  \, \mathrm{d}\mu_k \right)^{\frac{p}{2}} \\
&\geq \int_\Sigma \left( ( \xi_{k,2} H_k )^2 + \varepsilon \right)^{\frac{p}{2}} \, \mathrm{d}\mu_k + A^{1 - \frac{p}{2}} \left( 4 \pi \right)^{\frac{p}{2}}
\end{align*}
where the inequality in the penultimate line comes the fixed-surface-area constraint and H\"{o}lder's inequality in the form
\[
\normm{H}_{L^2}^p \leq \normm{H}_{L^p}^p \normm{1}_{L^{2p/(p-2)}}^p,
\]
and the final line comes from the previously mentioned inequality of Willmore for any closed surface. \\
Taking $p^{\textnormal{th}}$ roots and using the assumption that $\mathcal{M}_p^{\sigma,\varepsilon}[f_k ; f_0]$ is bounded from above (as it is a minimising sequence) shows that translating the immersions reduces the value of $\mathcal{M}_p^{\sigma,\varepsilon}[f_k ; f_0]$ by at least some fixed constant. \\

\noindent The convergence of the sequence $(f_k)$ to some limit $f_\infty$ follows, and it remains to argue that $f_\infty$ is a minimiser of $\mathcal{M}_p^{\sigma,\varepsilon}[ \,\bigcdot\, ;f_0]$, i.e. we need to establish the inequality
\[
\liminf_k \mathcal{M}_p^{\sigma,\varepsilon}[ f_k ;f_0] \leq \inf_f \mathcal{M}_p^{\sigma,\varepsilon}[ \,f ;f_0]
\]
with the admissible set of immersions as given in Section \ref{subsec:Lpmin}. \\
First, the convergence of the penalisation term $\mathcal{P}^\sigma[f_\infty;f_0] = \lim_k \mathcal{P}^\sigma[f_k;f_0]$ holds because the weak $W^{2,p}$ convergence of the sequence $(f_k)$ implies strong $C^0$ convergence. \\
Second, Theorem \ref{thm:KLL} directly implies the inequality
\[
\left( \int_\Sigma \left( (\xi_\infty H_\infty)^2 + \varepsilon \right)^{\frac{p}{2}} \,\mathrm{d}\mu_\infty \right)^{\frac{1}{p}} \leq \liminf_k \left( \int_\Sigma \left( (\xi_k H_k)^2 + \varepsilon \right)^{\frac{p}{2}} \,\mathrm{d}\mu_k \right)^{\frac{1}{p}}.
\]
These two statements combined immediately show that $f_\infty$ is minimal for the $\mathcal{M}_p^{\sigma,\varepsilon}[ \,\bigcdot\, ;f_0]$ functional, which completes the proof of the Lemma.
\end{proof}

\noindent Using the tools provided in Lemma \ref{lem:ELcomps}, we can readily compute the Euler-Lagrange equations which critical points of $\mathcal{M}_p^{\sigma,\varepsilon}[ \,\bigcdot\, ;f_0]$ must satisfy:

\begin{prop}\label{prop:pELepseqs}
Let the immersion $f_{p,\varepsilon}$ be a critical point of $\mathcal{M}_p^{\sigma,\varepsilon}[ \,\bigcdot\, ;f_0]$ and take $\Delta_{p,\varepsilon}$, $H_{p,\varepsilon}$, $K_{p,\varepsilon}$, $\xi_{p,\varepsilon}$ and $d_{p,\varepsilon}$ to be defined analogously as in Proposition \ref{prop:pELeqs}. Set
\[
h_{p,\varepsilon} = A^{-\frac{1}{p}} \left( \int_\Sigma \left( (\xi_{p,\varepsilon} H_{p,\varepsilon})^2 + \varepsilon \right)^{\frac{p}{2}} \, \mathrm{d} \mu_{p,\varepsilon} \right)^{\frac{1}{p}},
\]
and define the functions
\[
w_{p,\varepsilon} = h_{p,\varepsilon}^{1-p} ((\xi_{p,\varepsilon} H_{p,\varepsilon})^2 + \varepsilon)^{\frac{p-2}{2}} \xi_{p,\varepsilon}^2 H_{p,\varepsilon}
\]
and
\[
Q_{p,\varepsilon} = 2H_{p,\varepsilon}^2 - K_{p,\varepsilon} - \frac{2}{p} \left( H_{p,\varepsilon}^2 + \frac{\varepsilon}{\xi_{p,\varepsilon}^2} \right) + \frac{H_{p,\varepsilon} D_{\nu_{p,\varepsilon}} \xi_{p,\varepsilon}}{\xi_{p,\varepsilon}}.
\]
Then the Euler-Lagrange equations for $f_{p,\varepsilon}$ are given by
\begin{equation}\label{eq:epsEL}
\frac{1}{2} \Delta_{p,\varepsilon} w_{p,\varepsilon} + Q_{p,\varepsilon} w_{p,\varepsilon} = \lambda_{p,\varepsilon} H_{p,\varepsilon} - \sigma d_{p,\varepsilon} D_{\nu_{p,\varepsilon}} d_{p,\varepsilon} + \sigma d_{p,\varepsilon}^2 H_{p,\varepsilon}
\end{equation}
where again the equation is to be interpreted in the weak sense, and the real number $\lambda_{p,\varepsilon}$ is the Lagrange multiplier arising to account for the area constraint $\mathcal{A}[f_{p,\varepsilon}] = A$.
\end{prop}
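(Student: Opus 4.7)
The plan is to compute the first variation of $\mathcal{M}_p^{\sigma,\varepsilon}[\,\bigcdot\,;f_0]$ along a normal variation with velocity field $\phi\nu_{p,\varepsilon}$, incorporate the area constraint through the Lagrange multiplier $\lambda_{p,\varepsilon}$, and identify the resulting identity with the weak form of \eqref{eq:epsEL}. Since every term in the functional depends only on the image of $f$, normal variations suffice, and the derivation is structurally identical to the (omitted) proof of Proposition \ref{prop:pELeqs}; the only change of substance is the replacement of $(\xi H)^2$ by $(\xi H)^2 + \varepsilon$ inside the $p$-th power, which is what accounts for the new $\varepsilon/\xi_{p,\varepsilon}^2$ term appearing in $Q_{p,\varepsilon}$.

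I would begin by differentiating the integrand $F := ((\xi H)^2+\varepsilon)^{p/2}$ via Lemma \ref{lem:ELcomps}, which produces the factor $p((\xi H)^2+\varepsilon)^{(p-2)/2}\xi H$ multiplying
\[
\tfrac{1}{2}\xi\Delta\phi + \xi\phi(2H^2-K) + H\phi D_\nu\xi,
\]
together with an additional $-2\phi H F$ coming from the variation \eqref{eq:firstvar1} of the measure. The outer $p$-th root and the prefactor $A^{-1/p}$ are pulled through using $A^{-1/p}\bigl(\int F\,d\mu\bigr)^{1/p} = h_{p,\varepsilon}$, contributing an overall factor $\tfrac{1}{pA}h_{p,\varepsilon}^{1-p}$; the factor of $p$ cancels the $p$ from differentiating $F$, while the measure-variation term keeps the $1/p$, which is precisely the source of the $-\tfrac{2}{p}$ coefficient in $Q_{p,\varepsilon}$. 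Integrating by parts against $\phi$ in the $\Delta\phi$ piece, and using the algebraic identity $((\xi H)^2+\varepsilon)H = \xi^2 H\,(H^2 + \varepsilon/\xi^2)$ to rewrite the measure-variation term as a multiple of $w_{p,\varepsilon}$, the total variation of the principal term collapses to $\tfrac{1}{A}\int\phi\bigl(\tfrac{1}{2}\Delta_{p,\varepsilon}w_{p,\varepsilon} + Q_{p,\varepsilon}w_{p,\varepsilon}\bigr)\,d\mu_{p,\varepsilon}$.

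The penalisation term is varied directly using \eqref{eq:firstvar1} and \eqref{eq:firstvar4}, giving $\tfrac{\sigma}{A}\int\phi\bigl(d_{p,\varepsilon}D_{\nu_{p,\varepsilon}}d_{p,\varepsilon} - d_{p,\varepsilon}^2 H_{p,\varepsilon}\bigr)\,d\mu_{p,\varepsilon}$, while the area constraint contributes $-2\int\phi H_{p,\varepsilon}\,d\mu_{p,\varepsilon}$, which is absorbed into $\lambda_{p,\varepsilon}$. Requiring the total variation to vanish for all admissible $\phi$ then produces \eqref{eq:epsEL} in precisely the stated form. The main obstacle is really just careful bookkeeping, verifying that the four separate terms of $Q_{p,\varepsilon}$ reassemble exactly as claimed. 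The weak interpretation is essential here because a priori $w_{p,\varepsilon} \in L^{p/(p-1)}$ only, so $\Delta_{p,\varepsilon} w_{p,\varepsilon}$ exists only as a distribution; the additional regularity needed to interpret \eqref{eq:epsEL} pointwise is exactly what the analysis in Section \ref{subsec:epsconv} is designed to supply.
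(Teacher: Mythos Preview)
Your proposal is correct and is exactly what the paper does: it states that Proposition~\ref{prop:pELepseqs} ``follows from Lemma~\ref{lem:ELcomps} via routine computations'' and omits the details, and your computation carries out precisely those routine steps (the chain-rule factor $\tfrac{1}{pA}h_{p,\varepsilon}^{1-p}$, the identity $((\xi H)^2+\varepsilon)H=\xi^2 H(H^2+\varepsilon/\xi^2)$ turning the measure-variation term into $-\tfrac{2}{p}(H^2+\varepsilon/\xi^2)w_{p,\varepsilon}$, and the penalisation and area variations). One small correction to your closing remark: the $W^{2,2}$ regularity of $w_{p,\varepsilon}$ that upgrades \eqref{eq:epsEL} from distributional to weak is supplied by Proposition~\ref{prop:regularity}, not by Section~\ref{subsec:epsconv}, which instead transfers that regularity to the $\varepsilon=0$ limit $w_p$.
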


\noindent As was the case with Proposition \ref{prop:pELeqs}, Proposition \ref{prop:pELepseqs} follows from Lemma \ref{lem:ELcomps} via routine computations and so we do not provide a full proof. \\

\noindent We have the following result on the regularity of $w$:
\begin{prop}\label{prop:regularity}
Let $w_{p,\varepsilon}$ be as in Proposition \ref{prop:pELepseqs}, where $p > 2$ and $\varepsilon < \tilde{\varepsilon}$. Then $w_{p,\varepsilon} \in W^{2,2}(\Sigma)$.
\end{prop}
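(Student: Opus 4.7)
The central observation is that the parameter $\varepsilon>0$ regularises the functional $\mathcal{M}_p^{\sigma,\varepsilon}$: the integrand $((\xi H)^2+\varepsilon)^{p/2}$ is a smooth, uniformly strictly convex function of $H$, and the explicit map $H\mapsto w_{p,\varepsilon}$ obtained by freezing $\xi$, $h_{p,\varepsilon}$ and $\varepsilon$ is a smooth diffeomorphism of $\mathbb{R}$ onto itself whose derivative is bounded below by a positive multiple of $\varepsilon^{(p-2)/2}$. Pointwise estimates on $|w_{p,\varepsilon}|$ therefore translate into pointwise estimates on $|H_{p,\varepsilon}|$, and \eqref{eq:epsEL} can be viewed as a uniformly elliptic linear second-order equation for $w_{p,\varepsilon}$ on the $W^{2,p}$ surface $f_{p,\varepsilon}(\Sigma)$. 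This is exactly the set-up in which a classical elliptic bootstrap applies.

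The plan is to carry out such a bootstrap in three steps. First, I would record the starting regularity: by Lemma \ref{lem:epsminexist} and Theorem \ref{thm:KLL} the minimiser satisfies $f_{p,\varepsilon}\in W^{2,p}$, so $H_{p,\varepsilon}\in L^p$ and $K_{p,\varepsilon}\in L^{p/2}$, while the defining formula gives $w_{p,\varepsilon}\in L^{p/(p-1)}$. Since $W^{2,p}\hookrightarrow C^{1,\beta}$ with $\beta=1-2/p>0$ in two dimensions, the coefficients of $\Delta_{p,\varepsilon}$ are $C^{0,\beta}$ and standard $L^q$ elliptic regularity is available. Second, I would estimate the right-hand side of \eqref{eq:epsEL}: the linear terms $\lambda_{p,\varepsilon}H_{p,\varepsilon}$, $\sigma d_{p,\varepsilon}^2 H_{p,\varepsilon}$ and $\sigma d_{p,\varepsilon}D_{\nu_{p,\varepsilon}}d_{p,\varepsilon}$ lie in $L^p$ or $L^\infty$, while the semilinear term $Q_{p,\varepsilon}w_{p,\varepsilon}$ can be placed in some $L^{q_0}$ via H\"older, using the $L^{p/2}$ control on $Q_{p,\varepsilon}$ (whose worst term is quadratic in $H$ or linear in $K$) and the $L^{p/(p-1)}$ control on $w_{p,\varepsilon}$. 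Elliptic regularity then upgrades $w_{p,\varepsilon}$ to $W^{2,q_0}$, and the two-dimensional Sobolev embedding $W^{2,q}(\Sigma)\hookrightarrow L^\infty(\Sigma)$ (valid for any $q>1$) combined with the smooth inverse $w\mapsto H$ improves the pointwise control on $H_{p,\varepsilon}$. Third, iterating this procedure finitely many times yields $w_{p,\varepsilon},H_{p,\varepsilon}\in L^\infty$; then every coefficient and inhomogeneous term in \eqref{eq:epsEL} is in $L^\infty$, and one last application of elliptic regularity delivers $w_{p,\varepsilon}\in W^{2,q}$ for every finite $q$, in particular the desired $W^{2,2}$.

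The main obstacle is closing the very first iteration of the bootstrap: $L^{p/(p-1)}$ is weaker than $L^2$, and the nonlinear product $Q_{p,\varepsilon}w_{p,\varepsilon}$ has to be placed in a space good enough that elliptic regularity actually upgrades integrability rather than just preserving it. To handle this I would exploit the specific form of $Q_{p,\varepsilon}$, which is controlled by $H_{p,\varepsilon}^2+|K_{p,\varepsilon}|+|H_{p,\varepsilon}|$ times bounded factors, so that its integrability is pinned down by the $L^p$ and $L^{p/2}$ bounds above together with the Willmore energy bound built into the low-energy hypothesis inherited from Lemma \ref{lem:epsminexist}. Should the bootstrap through the $w$-equation fail to close cleanly, a fallback is to regard \eqref{eq:epsEL} as a fourth-order quasilinear elliptic system for $f_{p,\varepsilon}$ and invoke the same higher-order regularity machinery underlying the analyticity statement of Theorem \ref{thm:KLL}; the restriction $\xi\in W^{2,\infty}_{\mathrm{loc}}$ precludes full smoothness, but yields at least $f_{p,\varepsilon}\in W^{4,2}$, from which the smooth dependence $w=w(H,\xi)$ gives $w_{p,\varepsilon}\in W^{2,2}$ directly.
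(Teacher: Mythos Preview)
The paper does not give a detailed proof of this proposition; it simply refers to the regularity arguments in \cite[Section~3.1]{kuwert2015two} that establish the smoothness part of Theorem~\ref{thm:KLL}, noting that only a minor alteration is needed to accommodate the weight $\xi\in W^{2,\infty}_{\mathrm{loc}}$ and the lower-order terms coming from the penalisation. That approach treats the Euler--Lagrange equation as a fourth-order quasilinear elliptic system for the immersion $f_{p,\varepsilon}$ and bootstraps on $f$ itself; once $f_{p,\varepsilon}$ has two extra derivatives, the chain rule and the smooth dependence $w=w(H,\xi)$ give $w_{p,\varepsilon}\in W^{2,2}$. This is precisely your \emph{fallback}, and it is the route that actually works.

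Your primary plan---bootstrap directly on the scalar equation \eqref{eq:epsEL} for $w_{p,\varepsilon}$---does not get off the ground as written. With only $Q_{p,\varepsilon}\in L^{p/2}$ and $w_{p,\varepsilon}\in L^{p/(p-1)}$, H\"older gives $Q_{p,\varepsilon}w_{p,\varepsilon}\in L^{q_0}$ with
\[
\frac{1}{q_0}=\frac{2}{p}+\frac{p-1}{p}=\frac{p+1}{p}>1,
\]
so $q_0<1$ and the right-hand side of \eqref{eq:epsEL} is not in any genuine Lebesgue space. Standard $L^q$ elliptic regularity therefore cannot be invoked at the first step, and there is no $W^{2,q_0}$ to feed into the embedding $W^{2,q}\hookrightarrow L^\infty$. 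Your proposed remedy---``exploit the specific form of $Q_{p,\varepsilon}$''---does not help: the bottleneck is the integrability of $w_{p,\varepsilon}$, not of $Q_{p,\varepsilon}$, and the dominant contribution $H^2 w\sim |H|^{p+1}$ is genuinely worse than $L^1$ from $H\in L^p$ alone. The Kuwert--Lamm--Li machinery sidesteps this by working with the fourth-order divergence-form system for $f$, where the structure of the nonlinearity is favourable. So your fallback should be the main argument, not a contingency.
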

\noindent We omit a detailed proof of Proposition \ref{prop:regularity}, owing to the fact that the arguments used require only a very minor alteration of the arguments in \cite[Section 3.1]{kuwert2015two} used to prove the smoothness part of Theorem \ref{thm:KLL}. Because we are considering the weighted mean curvature $\xi H$ with $\xi$ as in Theorem \ref{thm:mainthm} (and in particular $\xi$ possibly non-smooth), we do not in general get full smoothness of $w$, although this is not a problem as our conclusion of the existence of the second order weak derivatives of $w$ is sufficient for our needs. \\

\noindent In fact the integrability of $w$ can be improved, although since the bounds provided in \cite{kuwert2015two} are not uniform this is not important to us; we only need to know that the second derivatives of $w$ exist in the weak sense so that our bounds in the following sections are valid. \\


\newpage
\section{Convergence of Approximations}\label{sec:compactness}

\subsection{Convergence of Immersions}\label{subsec:invconv}

\noindent We first show that our approximation of $\mathcal{M}_p^\sigma[\,\bigcdot\, ; f_0]$ by $\mathcal{M}_p^{\sigma,\varepsilon}[\,\bigcdot\, ; f_0]$ works on the level of immersions: that is, we can obtain a minimiser of $\mathcal{M}_p^\sigma[\,\bigcdot\, ; f_0]$ as the limit as $\varepsilon \rightarrow 0$ of a sequence of $\mathcal{M}_p^{\sigma,\varepsilon}[\,\bigcdot\, ; f_0]$ minimisers.
\begin{lem}\label{lem:epsconvimm}
For a given $\infty$-Willmore sphere satisfying the low-energy assumption of Theorem \ref{thm:mainthm} and a given fixed number $p \in (2,\infty)$, let $(f_k)$ be a sequence of minimisers of $\mathcal{M}_p^{\sigma,\varepsilon_k}[ \,\bigcdot\, ;f_0]$ with $\varepsilon_k \rightarrow 0$. Then there exists a minimiser $f_p$ of $\mathcal{M}_p^\sigma[\,\bigcdot\, ; f_0]$ such that $f_k \rightharpoonup f_p$ in the weak $W^{2,p}$ sense of Theorem \ref{thm:KLL}.
\end{lem}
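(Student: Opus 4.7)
\bigskip

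\textbf{Proof proposal for Lemma \ref{lem:epsconvimm}.}

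The plan is to show: (i) the sequence $(f_k)$ is precompact in the sense of Theorem \ref{thm:KLL} and hence has a weak $W^{2,p}$ limit $f_p$; (ii) $f_p$ is a minimiser of $\mathcal{M}_p^\sigma[\,\bigcdot\,;f_0]$. The structure is essentially the direct method, with the twist that the functional $\mathcal{M}_p^{\sigma,\varepsilon_k}$ varies with~$k$.

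First, to invoke Theorem \ref{thm:KLL}, I would verify its hypotheses for the sequence $(f_k)$. Since each $f_k$ minimises $\mathcal{M}_p^{\sigma,\varepsilon_k}[\,\bigcdot\,;f_0]$, I get
\[
\mathcal{M}_p^{\sigma,\varepsilon_k}[f_k;f_0] \leq \mathcal{M}_p^{\sigma,\varepsilon_k}[f_0;f_0] \leq \norm{\sqrt{(\xi_0 H_0)^2+\varepsilon_k}}_{L^\infty} + 0,
\]
which together with $\xi\geq 1$ and the chain of H\"{o}lder estimates used in the proof of Lemma \ref{lem:epsminexist} yields a uniform $L^p$ bound on $\sqrt{\varepsilon_k+H_k^2}$, hence the analogue of \eqref{eq:Wpineq}; and choosing $\varepsilon_k < \tilde\varepsilon$ (which is eventually true) it yields $\normm{H_k}_{L^2}<\sqrt{8\pi}-\delta$, hence \eqref{eq:Willmoreineq}. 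Preventing translation to infinity is handled exactly as in Lemma \ref{lem:epsminexist}: for $\sigma>0$ a translated minimising sequence would force the penalisation term to blow up, while for $\sigma=0$ I use assumption \eqref{cond:alph3} on $\xi$ to contradict minimality by translating each $f_k$ back near the origin. After reparametrising and translating so that $0\in f_k(\Sigma)$, Theorem \ref{thm:KLL} supplies a subsequence with $f_k \rightharpoonup f_p$ in $W^{2,p}$, strong $C^{1,\beta}$ convergence, and the graph-radius bound \eqref{eq:graphrad}; the area constraint passes to the limit from strong $C^1$ convergence of the metrics.

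Next I show $f_p$ is a minimiser. For an arbitrary admissible competitor $f$, monotone/dominated convergence gives
\[
\lim_{k\to\infty}\mathcal{M}_p^{\sigma,\varepsilon_k}[f;f_0] = \mathcal{M}_p^{\sigma}[f;f_0],
\]
since $((\xi H)^2+\varepsilon_k)^{p/2}\downarrow(\xi H)^{p}$ pointwise with dominating upper envelope $((\xi H)^2+\varepsilon_1)^{p/2}\in L^1$. By minimality of $f_k$,
\[
\mathcal{M}_p^{\sigma,\varepsilon_k}[f_k;f_0] \leq \mathcal{M}_p^{\sigma,\varepsilon_k}[f;f_0] \longrightarrow \mathcal{M}_p^{\sigma}[f;f_0].
\]
For the lower-semicontinuity step I need
\[
\mathcal{M}_p^{\sigma}[f_p;f_0] \leq \liminf_{k\to\infty}\mathcal{M}_p^{\sigma,\varepsilon_k}[f_k;f_0].
\]
The penalisation term converges by strong $C^0$ convergence together with \eqref{eq:distineqs} and boundedness of $Dd$. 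For the curvature term, Remark \ref{rem:meancurvconv} gives $H_k\rightharpoonup H_p$ weakly in $L^p$, while $\xi_k\to\xi_p$ uniformly (by $\xi\in W^{2,\infty}_{\text{loc}}$ and strong $C^0$ convergence of $f_k$), so $\xi_k H_k\rightharpoonup \xi_p H_p$ weakly in $L^p$ with respect to a common reference measure; combined with strong convergence of the area elements this promotes to convergence on $\Sigma$ with the moving measure $\mu_k$. Lower semicontinuity of the $L^p$ norm under weak convergence then gives
\[
\normm{\xi_p H_p}_{L^p(\mu_p)} \leq \liminf_k \normm{\xi_k H_k}_{L^p(\mu_k)} \leq \liminf_k \left(\int_\Sigma ((\xi_k H_k)^2+\varepsilon_k)^{p/2}\,\mathrm{d}\mu_k\right)^{1/p},
\]
the last inequality being pointwise. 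Dividing by $A^{1/p}$ and combining the two estimates yields $\mathcal{M}_p^{\sigma}[f_p;f_0]\leq \mathcal{M}_p^{\sigma}[f;f_0]$ for every admissible $f$, proving minimality.

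The main technical obstacle I anticipate is the lower-semicontinuity step, specifically the joint passage $\varepsilon_k\to 0$ and $f_k\rightharpoonup f_p$ under a \emph{moving} surface measure $\mu_k$. The cleanest route is to pull everything back via the graph parametrisations guaranteed by \eqref{eq:graphrad}: in local charts around each point the mean-curvature expression is linear in second derivatives, the Jacobian factors converge strongly in $C^0$, and $(\xi_k H_k)\sqrt{\det g_k}\rightharpoonup(\xi_p H_p)\sqrt{\det g_p}$ weakly in $L^p$ on the chart; a standard convexity argument (Mazur's lemma or the Ioffe--Olech theorem applied to the integrand $t\mapsto(t^2+\varepsilon_k)^{p/2}$ which is convex in $t$) then delivers the liminf inequality uniformly in $\varepsilon_k$. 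Everything else reduces to bookkeeping.
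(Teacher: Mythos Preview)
Your proposal is correct and follows essentially the same route as the paper's proof: verify the hypotheses of Theorem~\ref{thm:KLL} by the computations of Lemma~\ref{lem:epsminexist}, extract a weak $W^{2,p}$ limit, and then run the chain
\[
\mathcal{M}_p^\sigma[f_p;f_0] \;\leq\; \liminf_k \mathcal{M}_p^\sigma[f_k;f_0] \;\leq\; \liminf_k \mathcal{M}_p^{\sigma,\varepsilon_k}[f_k;f_0] \;\leq\; \liminf_k \mathcal{M}_p^{\sigma,\varepsilon_k}[\tilde f;f_0] \;=\; \mathcal{M}_p^\sigma[\tilde f;f_0],
\]
which is exactly your decomposition (lower semicontinuity of the $\varepsilon=0$ curvature term, the pointwise inequality $|\xi H|^p \leq ((\xi H)^2+\varepsilon)^{p/2}$, minimality of $f_k$, monotone convergence at the competitor). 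The only difference is cosmetic: the paper states the first inequality as ``weak lower semicontinuity of $\mathcal{M}_p^\sigma$'' without further comment, whereas you unpack it via Remark~\ref{rem:meancurvconv}, uniform convergence of $\xi_k$, and strong convergence of the area elements, and then propose a local-graph/convexity argument as backup. Your detailed justification is correct, but the Ioffe--Olech machinery is more than you need---once you have $\xi_k H_k \rightharpoonup \xi_p H_p$ in $L^p$ against a fixed reference measure and strong $C^0$ convergence of $\sqrt{\det g_k}$, the standard weak lower semicontinuity of the $L^p$ norm already gives the inequality.
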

\begin{proof}
The same computations as seen earlier in Section \ref{subsec:epsapprox} show that the assumptions of Theorem \ref{thm:KLL} hold for the sequence of immersions $(f_k)$, which therefore converges weakly in $W^{2,p}$ to some limiting immersion $f$. To show that $f =: f_p$ is indeed a minimiser of $\mathcal{M}_p^{\sigma}[ \,\bigcdot\, ;f_0]$, it suffices to consider the chain of inequalities for any given immersion $\tilde{f}$
\begin{align*}
\mathcal{M}_p^\sigma [ f ; f_0 ] &\leq \liminf_k \mathcal{M}_p^\sigma [ f_k ; f_0 ] \\
&\leq \liminf_k \mathcal{M}_p^{\sigma,\varepsilon_k} [ f_k ; f_0 ] \\
&\leq \liminf_k \mathcal{M}_p^{\sigma,\varepsilon_k} [ \tilde{f} ; f_0 ] \\
&= \mathcal{M}_p^{\sigma} [ \tilde{f} ; f_0 ]
\end{align*}
which is valid because of (respectively) the weak lower semicontinuity of $\mathcal{M}_p^\sigma[ \,\bigcdot\, ; f_0 ]$ with respect to $W^{2,p}$ convergence, the definitions of the $\mathcal{M}_p^\sigma[ \,\bigcdot\, ; f_0 ]$ and $\mathcal{M}_p^{\sigma,\varepsilon}[ \,\bigcdot\, ; f_0 ]$ functionals, the minimising characterisation of the immersions $(f_k)$, and the monotone convergence theorem. 
\end{proof}

\noindent Next, we show that our $L^p$ approximation works as a way of selecting $\infty$-Willmore spheres-- i.e. we can obtain any $\infty$-Willmore sphere $f_0$ satisfying the low energy assumption of Theorem \ref{thm:mainthm} as the limit of a sequence of $\mathcal{M}_p^{\sigma}[ \,\bigcdot\, ;f_0]$ minimisers:
\begin{prop}\label{prop:penconv}
Let $f_0$ be any given $\infty$-Willmore sphere which satisfies the low-energy assumption of Theorem \ref{thm:mainthm}, and let $(f_p)$ be a sequence of minimisers of $\mathcal{M}_p^{\sigma}[\,\bigcdot\, ; f_0]$. If $\sigma$ is chosen to be sufficiently large, the sequence $(f_p)$ converges to $f_0$ weakly in $W^{2,q}$ for all $q < \infty$.
\end{prop}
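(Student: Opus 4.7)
The plan is to combine three ingredients: the minimising property of $f_p$, the $L^p$-to-$L^\infty$ compactness machinery of Theorem \ref{thm:KLL}, and the defining inequality of an $\infty$-Willmore sphere (Definition \ref{def:infWillmoresurface}). After extracting a subsequential weak limit $\tilde f$ of $(f_p)$, taking $\sigma$ sufficiently large will force $\tilde f = f_0$ up to reparametrisation, and a routine subsequence argument will upgrade this to convergence of the whole sequence.

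From the minimality of $f_p$ and the vanishing $\mathcal{P}^\sigma[f_0;f_0]=0$ I extract the comparison
\[
A^{-1/p}\|\xi_p H_p\|_{L^p} + \mathcal{P}^\sigma[f_p;f_0] \le A^{-1/p}\|\xi_0 H_0\|_{L^p}.
\]
Combined with $\xi\geq 1$, H\"older, and the low-energy hypothesis on $f_0$, this yields $\limsup_p \|H_p\|_{L^2}^2 < 8\pi$, so (after a translation to ensure $0\in f_p(\Sigma)$, as in Lemma \ref{lem:epsminexist}) Theorem \ref{thm:KLL} produces a subsequence with $f_p \rightharpoonup \tilde f$ weakly in $W^{2,q}(\Sigma,\mathbb{R}^3)$ for every finite $q$ (suppressing the implicit diffeomorphisms). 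I then pass to the limit in the comparison. The penalisation is continuous under this convergence, since $d(f_p)^2 \to d(\tilde f)^2$ uniformly and $\sqrt{\det g_p}$ converges strongly in $L^q$, so $\mathcal{P}^\sigma[f_p;f_0] \to \mathcal{P}^\sigma[\tilde f;f_0]$. For the curvature term, Remark \ref{rem:meancurvconv}, weak $L^q$ lower semicontinuity applied to $\xi_p H_p$ for each fixed $q<p$, the Jensen-type monotonicity of $p \mapsto A^{-1/p}\|\varphi\|_{L^p}$, and finally $q\to\infty$ combine to give $\liminf_p A^{-1/p}\|\xi_p H_p\|_{L^p} \ge \mathcal{M}_\infty[\tilde f]$. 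Since $A^{-1/p}\|\xi_0 H_0\|_{L^p}\to\mathcal{M}_\infty[f_0]$, the comparison limits to $\mathcal{M}_\infty[\tilde f] + \mathcal{P}^\sigma[\tilde f;f_0] \le \mathcal{M}_\infty[f_0]$.

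The decisive step applies Definition \ref{def:infWillmoresurface} to the $\infty$-Willmore sphere $f_0$ with test immersion $\tilde f$:
\[
\mathcal{M}_\infty[f_0] \le \mathcal{M}_\infty[\tilde f] + \frac{M}{2A} \int_\Sigma \textnormal{dist}(\tilde f(x),f_0(\Sigma))^2\, \mathrm{d}\mu_{\tilde f}.
\]
Subtracting from the previous bound and using the lower bound on $d$ in \eqref{eq:distineqs} produces
\[
\frac{\sigma}{C^2}\int_\Sigma \textnormal{dist}(\tilde f(x),f_0(\Sigma))^2\, \mathrm{d}\mu_{\tilde f} \le M \int_\Sigma \textnormal{dist}(\tilde f(x),f_0(\Sigma))^2\, \mathrm{d}\mu_{\tilde f}.
\]
Any choice $\sigma > M C^2$ thus forces the integral to vanish, so $\tilde f(\Sigma) \subset f_0(\Sigma)$. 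The low-energy hypothesis yields $\|H_0\|_{L^2}^2 < 8\pi$, so by the Li--Yau inequality $f_0$ is embedded; combined with the area constraint this gives $\tilde f(\Sigma)=f_0(\Sigma)$, and hence $\tilde f = f_0$ up to a diffeomorphism of $\Sigma$. Because every subsequence of $(f_p)$ admits a further subsequence converging to $f_0$ in this sense, the whole sequence does.

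The main obstacle I expect is the two-parameter limit used to establish $\liminf_p A^{-1/p}\|\xi_p H_p\|_{L^p} \ge \mathcal{M}_\infty[\tilde f]$: the $p$-dependence of both the norm and the space of weak convergence requires one to freeze some $q<p$, invoke weak $L^q$ lower semicontinuity of the norm on the weakly convergent sequence $\xi_p H_p$, and only afterwards let $q\to\infty$. A minor bookkeeping point is that Theorem \ref{thm:KLL} yields convergence only after composition with a sequence of diffeomorphisms of $\Sigma$; accordingly, "$f_p \to f_0$" is understood modulo reparametrisation, consistent with the convention set out earlier in the paper.
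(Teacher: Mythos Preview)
Your proof is correct and follows essentially the same route as the paper: apply Theorem \ref{thm:KLL} to extract a subsequential limit, pass to the limit in the minimality comparison via the two-step $\lim_q \liminf_p$ argument you describe, invoke Definition \ref{def:infWillmoresurface}, and choose $\sigma$ large enough to force the penalisation integral to vanish. The only cosmetic difference is the final step from $\tilde f(\Sigma)\subset f_0(\Sigma)$ to equality: the paper uses a direct topological argument (a sphere cannot sit as a proper subset inside another sphere), whereas you invoke Li--Yau embeddedness together with the area constraint; both are valid, though your phrasing would be cleaner if you also noted that $\tilde f$ is embedded (again by Li--Yau, since $\|H_{\tilde f}\|_{L^2}^2<8\pi$ by lower semicontinuity), or alternatively observed that $f_0^{-1}\circ\tilde f$ is a local diffeomorphism of $S^2$ and hence a diffeomorphism.
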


\begin{proof}
The subsequential convergence (again not made explicit) of the sequence $(f_p)$ in the weak $W^{2,q}$ sense for any finite $q$ follows from Theorem \ref{thm:KLL}. The calculations to ensure the necessary assumptions are satisfied follow along exactly the same lines as the calculations in Section \ref{subsec:epsapprox}. It remains to be shown that the limit, which we write as $f_\infty$ for now, is in fact $f_0$. \\
Suppose for a contradiction that $f_\infty$ differs from $f_0$. By the strong $C^{1,\beta}$ convergence of $(f_p)$ we have that $\lim_p \mathcal{P}^\sigma [ f_p ; f_0 ] = \mathcal{P}^\sigma [ f_\infty ; f_0 ]$, and because of the lower semicontinuity of the $L^q$ norm with respect to weak convergence we have that $\normm{\xi_\infty H_\infty}_{L^q} \leq \liminf_p \normm{\xi_p H_p}_{L^q}$. Combining these with the characterisation of the $L^\infty$ norm as the limit of $L^q$ norms as $q \rightarrow \infty$, we have that
\begin{align*}
\mathcal{M}_\infty^{\sigma}[f_\infty ; f_0 ] = \lim_q \mathcal{M}_q^{\sigma}[f_\infty ; f_0 ] \leq \lim_q \liminf_p \mathcal{M}_q^{\sigma}[f_p ; f_0 ].
\end{align*}
The chain of inequalities
\[
\mathcal{M}_q^{\sigma} [f_p ; f_0] \leq \mathcal{M}_p^{\sigma} [f_p ; f_0] \leq \mathcal{M}_p^{\sigma} [f_0 ; f_0] = \mathcal{M}_p [f_0] \leq \mathcal{M}_\infty [f_0]
\]
for $p \geq q$, which comes from H\"{o}lder's inequality, the definitions of the functionals involved and the definition of the sequence $(f_p)$, then implies that
\[
\mathcal{M}_\infty^{\sigma}[f_\infty ; f_0 ] \leq \mathcal{M}_\infty [f_0].
\]
By assumption $f_0$ is a $\infty$-Willmore sphere, and so there exists $M > 0$ such that
\begin{align*}
\mathcal{M}_\infty[f_0] &\leq \mathcal{M}_\infty[f_\infty] + \frac{M}{2A} \int_\Sigma \textnormal{dist}(f_\infty(x),f_0(\Sigma))^2 \, \mathrm{d}\mu_\infty \\
&\leq \mathcal{M}_\infty[f_\infty] + \frac{CM}{2A} \int_\Sigma d(f_\infty(x))^2 \, \mathrm{d}\mu_\infty
\end{align*}
where $d$ is some regularisation of the distance from $f_0(\Sigma)$ and the constant $C$ is as in \eqref{eq:distineqs}, so we have that
\[
\mathcal{M}_\infty^{\sigma}[f_\infty ; f_0] \leq \mathcal{M}_\infty^{CM}[f_\infty ; f_0].
\]
Picking $\sigma > CM$ forces the equality
\[
\int_\Sigma d(f_\infty(x),f_0(\Sigma))^2 \, \mathrm{d}\mu_0 = 0
\]
and so $f_0(\Sigma) \subseteq f_\infty(\Sigma)$. A straightforward topological argument then implies the equality of sets $f_0(\Sigma) = f_\infty(\Sigma)$. Both $f_0(\Sigma)$ and $f_\infty(\Sigma)$ are diffeomorphic to the round sphere $S$, so without loss of generality we may deform $f_\infty(\Sigma)$ into $S$. If $f_0(\Sigma) \neq f_\infty(\Sigma)$, the image of $f_0(\Sigma)$ under this deformation must be a strict subset of $S$, but it must also be diffeomorphic to $S$, which is impossible. \\
The above arguments show that any subsequence of $(f_p)$ has a further subsequence which converges to $f_0$, and hence the original sequence must itself converge to $f_0$.
\end{proof}

\noindent We have the following corollaries:
\begin{cor}\label{cor:distconv}
In the context of Proposition \ref{prop:penconv}, the sequence of functions $d_p(x) = d(f_p(x))$ converges uniformly to $0$ as $p \rightarrow \infty$.
\end{cor}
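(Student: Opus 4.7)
The plan is to combine the uniform Lipschitz regularity of the regularised distance function $d$ with the strong uniform convergence $f_p \to f_0$ that follows from Proposition \ref{prop:penconv}. The statement is essentially a direct consequence of these two ingredients, so the work is in identifying and assembling them rather than in any substantive estimate.

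First I would upgrade the weak convergence given by Proposition \ref{prop:penconv} to strong $C^0$ convergence. Since $f_p \rightharpoonup f_0$ in $W^{2,q}(\Sigma,\mathbb{R}^3)$ for every $q < \infty$, choosing $q$ large enough and invoking the compact Sobolev embedding $W^{2,q}(\Sigma) \hookrightarrow C^0(\Sigma)$ (on the two-dimensional surface $\Sigma$ this works already for any $q > 2$, and in fact yields strong $C^{1,\beta}$ convergence as used elsewhere in the paper) gives $\|f_p - f_0\|_{C^0(\Sigma)} \to 0$.

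Next I would exploit the properties of $d$ recorded in \eqref{eq:distineqs}. The bound $|Dd(y)| \leq C$ for every $y \in \mathbb{R}^3$ says that $d$ is globally $C$-Lipschitz on $\mathbb{R}^3$. Moreover, for every $x \in \Sigma$ the point $f_0(x)$ lies in $f_0(\Sigma)$, so $\textnormal{dist}(f_0(x),f_0(\Sigma)) = 0$, and the upper bound in \eqref{eq:distineqs} forces $d(f_0(x)) = 0$. Combining these two observations yields the pointwise inequality
\[
|d_p(x)| = |d(f_p(x)) - d(f_0(x))| \leq C\,|f_p(x) - f_0(x)|
\]
for every $x \in \Sigma$, and taking the supremum gives $\|d_p\|_{C^0(\Sigma)} \leq C\,\|f_p - f_0\|_{C^0(\Sigma)} \to 0$, which is exactly the required uniform convergence.

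There is no real obstacle here; the only thing to watch is to use a version of the convergence from Proposition \ref{prop:penconv} that is actually strong in $C^0$ (so that Lipschitz continuity of $d$ transfers the decay of $f_p - f_0$ to decay of $d_p$), and to remember that the Lipschitz constant of $d$ in \eqref{eq:distineqs} is global, which is what makes the pointwise estimate uniform in $x$.
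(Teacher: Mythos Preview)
Your proposal is correct and follows essentially the same approach as the paper: the paper simply notes that weak $W^{2,q}$ convergence for all finite $q$ gives strong $C^{1,\beta}$ (hence $C^0$) convergence by Rellich--Kondrachov, and that this $C^0$ convergence suffices for the distance functions. You have merely spelled out the last step in more detail, using the global Lipschitz bound on $d$ from \eqref{eq:distineqs} and the observation $d(f_0(x))=0$, which the paper leaves implicit.
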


\begin{cor}\label{cor:lapbeltconv}
The coefficients of the Laplace-Beltrami operators $(\Delta_p)$ on $f_p(\Sigma)$ converge as $p \rightarrow \infty$ to the coefficients of the Laplace-Beltrami operator (in divergence form) on $f_0(\Sigma)$ in $C^{0,\beta}$ for all $\beta < 1$.
\end{cor}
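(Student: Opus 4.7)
The plan is to leverage the strong convergence that follows from Proposition \ref{prop:penconv} via a standard Sobolev embedding. Since $\Sigma$ is two-dimensional and closed, the embedding $W^{2,q}(\Sigma) \hookrightarrow C^{1,\beta}(\Sigma)$ is compact whenever $\beta < 1 - 2/q$. Taking $q$ arbitrarily large, the weak $W^{2,q}$ convergence $f_p \rightharpoonup f_0$ therefore upgrades to strong $C^{1,\beta}$ convergence for every $\beta < 1$; in particular $\nabla f_p \to \nabla f_0$ in $C^{0,\beta}(\Sigma)$.

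Next, fix a finite atlas of coordinate charts on $\Sigma$. In such a chart, the Laplace-Beltrami operator in divergence form reads
\[
\Delta_p u = \frac{1}{\sqrt{\det g_p}} \, \partial_i \!\left( \sqrt{\det g_p}\, g_p^{ij} \, \partial_j u \right),
\]
whose coefficients are the functions $a_p^{ij} := \sqrt{\det g_p}\, g_p^{ij}$, together with the normalising factor $(\det g_p)^{-1/2}$. The components of the induced metric are $(g_p)_{ij} = \partial_i f_p \cdot \partial_j f_p$, which are continuous bilinear expressions in $\nabla f_p$. Hence $(g_p)_{ij} \to (g_0)_{ij}$ in $C^{0,\beta}$ for every $\beta < 1$.

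The one technical point is to ensure that $\det g_p$ is bounded uniformly away from zero along the sequence, so that matrix inversion and the square root are Lipschitz continuous on the relevant range. This follows from the graph-radius inequality \eqref{eq:graphrad} in Theorem \ref{thm:KLL} applied to $(f_p)$, which gives a uniform positive lower bound on the graph radius; equivalently, one can cover $\Sigma$ by a finite atlas adapted to the immersion $f_0$ and invoke the strong $C^1$ convergence $f_p \to f_0$ to conclude that $\det g_p$ stays uniformly positive on each chart for all large $p$. With this non-degeneracy in hand, $g_p^{ij}$ and $\sqrt{\det g_p}$ are smooth functions of the bounded non-degenerate matrices $(g_p)_{ij}$, so they converge in $C^{0,\beta}$ to $g_0^{ij}$ and $\sqrt{\det g_0}$ respectively.

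Combining these steps, the coefficients $a_p^{ij}$ of the Laplace-Beltrami operators $\Delta_p$ converge to $a_0^{ij}$ in $C^{0,\beta}$ for every $\beta < 1$, which is the claim. I do not anticipate a serious obstacle; the only point requiring any care is the uniform non-degeneracy of the metric, and this is a routine consequence of Theorem \ref{thm:KLL} together with the fact that the limit $f_0$ is itself an immersion.
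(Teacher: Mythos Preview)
Your proposal is correct and follows essentially the same approach as the paper: weak $W^{2,q}$ convergence for all $q<\infty$ upgrades via Rellich--Kondrachov to strong $C^{1,\beta}$ convergence, and since the divergence-form coefficients of $\Delta_p$ are built from the metric components $(g_p)_{ij}=\partial_i f_p\cdot\partial_j f_p$, they converge in $C^{0,\beta}$. Your additional remark on the uniform non-degeneracy of $\det g_p$ is a helpful clarification that the paper leaves implicit.
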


\noindent These corollaries follows because the weak $W^{2,q}$ convergence of the immersions for all finite $q$ implies strong $C^{1,\beta}$ convergence for all $\beta < 1$ by Rellich-Kondrachov compactness. For the distance functions it suffices to have mere $C^0$ convergence, while the operator $\Delta_p$ is given in local co-ordinates by 
\[
\Delta_p = \frac{1}{\sqrt{\det g_p}} \partial_i \left( \sqrt{\det g_p} (g_p)^{ij} \partial_j  \right)
\]
and hence the metric components $(g_p)^{ij}$ (i.e. first derivatives of $f_p$) are convergent in $C^{0,\beta}$ to $(g_0)^{ij}$. \\

\noindent From this point onwards we will always assume with no loss of generality that $\sigma$ is large enough for Proposition \ref{prop:penconv} to apply. \\

\subsection{Uniform Bounds; Notation \& Terminology}\label{subsec:uniformbounds}

\noindent In this section we derive higher estimates on the terms in the Euler-Lagrange equation \eqref{eq:epsEL} which will be used to ensure that we have not just the convergence of immersions as seen in Section \ref{subsec:invconv} but also the convergence of the Euler-Lagrange equations \eqref{eq:pEL} and \eqref{eq:epsEL}. \\

\noindent Because we are interested in taking limits in two different ways-- first, the limit as $\varepsilon \rightarrow 0$ of our $\mathcal{M}_p^{\sigma,\varepsilon}[ \,\bigcdot\, ;f_0]$ minimisers and Euler-Lagrange equations for a fixed number $p$, and second, the limit as $p \rightarrow \infty$ of our $\mathcal{M}_p^{\sigma}[ \,\bigcdot\, ;f_0]$ minimisers and Euler-Lagrange equations-- we must be precise with our terminology. We have already established in Section \ref{subsec:epsapprox} that there exists some threshold $\tilde{\varepsilon}$ beneath which the functionals $\mathcal{M}_p^{\sigma,\varepsilon}[ \,\bigcdot\, ;f_0]$ have minimisers, and as $\varepsilon \rightarrow 0$ these minimisers converge to a minimiser of $\mathcal{M}_p^{\sigma}[ \,\bigcdot\, ;f_0]$. However, for some of our estimates on the terms of the Euler-Lagrange equations \eqref{eq:pEL} and \eqref{eq:epsEL} to be valid we will need to assume that our immersions are ``sufficiently close'' to the $\infty$-Willmore sphere $f_0$. This means that $p$ needs to be sufficiently large (so $f_p$ is close to $f_0$), say $p > \hat{p}$ for some $\hat{p}$, and $\varepsilon$ needs to be sufficiently small (so $f_{p,\varepsilon}$ is close to $f_p$). However, the rate of convergence of $f_{p,\varepsilon}$ to $f_p$ may well not be independent of $p$, and so we can only guarantee that our bounds hold when $p > \hat{p}$ and $\varepsilon$ is beneath some value $\hat{\varepsilon}(p)$. \\

\noindent Recall from Lemma \ref{lem:epsminexist} that there exists a constant $\tilde{\varepsilon}$ independent of $p$ such that for all $p > 2$ and $\varepsilon < \tilde{\varepsilon}$, the functional $\mathcal{M}_p^{\sigma,\varepsilon}[ \,\bigcdot\, ;f_0]$ admits a minimiser. Hence we can freely assume that $\hat{\varepsilon}(p) < \tilde{\varepsilon}$. \\

\noindent In view of the above discussions, when we make statements in this section of the form ``for all $p$ sufficiently large and $\varepsilon$ sufficiently small,'' it is to be understood that we mean it in the precise sense detailed above, for all $p > \hat{p}$ and $\varepsilon < \hat{\varepsilon}(p)$. When we talk about ``uniform bounds'' it will be meant unless explicitly stated otherwise that the bound in question does not depend on either $p$ or $\varepsilon$, although it may apply only for $p > \hat{p}$ sufficiently large and $\varepsilon < \hat{\varepsilon}(p)$ in a range depending on $p$. \\
Also, as we are allowing both $p$ and $\varepsilon$ to vary, for the convenience of our notation we will drop the subscripts from the immersions and elements of the Euler-Lagrange equation in Proposition \ref{prop:pELepseqs}, leaving it implicit that the immersions, mean curvatures, etc. that we talk about depend on $p$ and $\varepsilon$. \\
To demonstrate how we might use this terminology, the sentence ``$w$ is uniformly bounded in $L^1$ for all $p$ sufficiently large and $\varepsilon$ sufficiently small'' formally means that there exists a constant $C > 0$ such that for all $w = w_{p,\varepsilon}$ as in Proposition \ref{prop:pELepseqs} with $p > \hat{p}$ and $\varepsilon < \hat{\varepsilon}(p)$, the inequality $\normm{w}_{L^1} \leq C$ holds. \\

\noindent When writing uniform bounds, we will often use the notation $\lesssim$, which is commonly used in the PDE literature although not always explained. Formally, we say that $A \lesssim B$ for a set of pairs $(A_i,B_i)$, $i \in I$ some indexing set, if there exists a constant $C$ independent of $i$ such that $A_i \leq C B_i$ for all $i \in I$. In the context of this paper, the set $I$ will be given by the set of pairs $\{ (p, \varepsilon) \}$ with $p > \hat{p}$ and $\varepsilon < \hat{\varepsilon}(p)$. \\
For example, we may write the inequality
\[
\normm{H_{p,\varepsilon}}_{L^1} \leq  A^{\frac{1}{2}} \normm{H_{p,\varepsilon}}_{L^2}
\]
(which is valid by H\"{o}lder's inequality and the fixed-surface-area condition) as $\normm{H}_{L^1} \lesssim \normm{H}_{L^2}$, and the above uniform bound $\normm{w}_{L^1} \leq C$ becomes $\normm{w}_{L^1} \lesssim 1$.  \\

\noindent The purpose of this subsection is to prove the following statement:
\begin{prop}\label{prop:finalunifbounds}
Let $f_0$ be an $\infty$-Willmore surface satisfying the low energy assumption of Theorem \ref{thm:mainthm} and let $\sigma$ be large enough that Proposition \ref{prop:penconv} applies. Then for any fixed $p$ sufficiently large and all $\varepsilon$ sufficiently small, the following quantities are uniformly bounded in $\varepsilon$:
\[
\normm{f}_{W^{2,p}}, \, \normm{w}_{W^{2,\frac{p}{2}}}, \, \normm{Q}_{L^{\frac{p}{2}}}, \, \vert \lambda \vert, \, h.
\]
If we instead allow $p$ to vary, then for any fixed $q < \infty$ the following quantities are uniformly bounded in $p$ and $\varepsilon$ for all $p \geq 2q$ sufficiently large and $\varepsilon$ sufficiently small:
\[
\normm{f}_{W^{2,q}}, \, \normm{w}_{W^{2,q}}, \, \normm{Q}_{L^q}, \, \vert \lambda \vert, \, h.
\]
\end{prop}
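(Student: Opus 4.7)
The plan is to establish the bounds sequentially: first $h$ and the $L^p$ norms of $H$ and $\xi H$ directly from the minimizing property; then $\|f\|_{W^{2,\cdot}}$ and $\|A\|_{L^\cdot}$ by combining Theorem~\ref{thm:KLL} with the convergence results already proved in Lemma~\ref{lem:epsconvimm} and Proposition~\ref{prop:penconv}; then $\|Q\|_{L^{p/2}}$ from the explicit form of $Q$; and finally the Euler--Lagrange equation~\eqref{eq:epsEL} combined with elliptic regularity to bound $\lambda$ and the Sobolev norms of $w$.

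Testing the minimality of $f$ against the competitor $f_0$ and discarding the non-negative penalty gives $h_{p,\varepsilon}\leq\|\sqrt{(\xi_0 H_0)^2+\tilde\varepsilon}\,\|_{L^\infty}$ uniformly in $p$ and $\varepsilon<\tilde\varepsilon$; whence $\|\xi H\|_{L^p}\leq h A^{1/p}$ and (since $\xi\geq 1$) $\|H\|_{L^p}\leq\|\xi H\|_{L^p}$ are uniformly bounded. For $f$: Lemma~\ref{lem:epsconvimm} provides $\varepsilon$-uniform $W^{2,p}$ bounds for fixed $p$, while Proposition~\ref{prop:penconv} provides $p$-uniform $W^{2,q}$ bounds on the limits $f_p$; combining the two and choosing $\hat\varepsilon(p)$ sufficiently small gives uniformity in both $p$ and $\varepsilon$. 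In local graph representations $|A|\lesssim|D^2 f|+\textrm{l.o.t.}$, and the graph-radius bound~\eqref{eq:graphrad} transfers the $W^{2,\cdot}$ bounds on $f$ to the corresponding $L^{\cdot}$ bounds on $A$. Using the identity $2H^2-K=\tfrac{1}{2}|A|^2$, I can rewrite $Q=\tfrac{1}{2}|A|^2-\tfrac{2}{p}(H^2+\varepsilon/\xi^2)+HD_\nu\xi/\xi$, so that $|Q|\lesssim|A|^2+|H|+1$ and $\|Q\|_{L^{p/2}}$ (respectively $\|Q\|_{L^q}$ for any $q\leq p/2$) is uniformly bounded. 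The pointwise estimate $|w|\lesssim h^{1-p}\bigl((\xi H)^2+\varepsilon\bigr)^{(p-1)/2}$ combined with $\int\bigl((\xi H)^2+\varepsilon\bigr)^{p/2}d\mu=h^p A$ then yields $\|w\|_{L^{p/(p-1)}}\lesssim 1$ by a direct application of H\"older's inequality.

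To bound $\lambda$ I will test~\eqref{eq:epsEL} against a smooth function $\phi$ for which $\int H\phi\,d\mu$ is bounded away from zero; such a $\phi$ exists because $f\to f_0$ in $C^{1,\beta}$ so any such $\phi$ may be borrowed from $f_0$ (for example via a Minkowski-type identity applied to a coordinate direction), and the resulting expression for $\lambda$ is a combination of integrals already controlled by the bounds above. Writing~\eqref{eq:epsEL} as $\Delta_{p,\varepsilon}w=2\lambda H-2Qw+O(\sigma)$ with $C^{0,\beta}$-controlled coefficients (Corollary~\ref{cor:lapbeltconv}), I will then run an elliptic bootstrap: H\"older controls $\|Qw\|_{L^s}$ in terms of $\|Q\|_{L^{p/2}}$ and the current $L^r$ bound on $w$, elliptic regularity for the Laplace--Beltrami operator yields $\|w\|_{W^{2,s}}$, Sobolev embedding on the closed $2$-surface $\Sigma$ returns a higher $L^t$ bound on $w$, and iterating reaches $\|w\|_{W^{2,p/2}}$ for part~(i) and $\|w\|_{W^{2,q}}$ for part~(ii). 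The main technical obstacle is arranging this bootstrap to be uniform in $p$ as $p\to\infty$: since the initial exponent $p/(p-1)$ degenerates to $1$, the interplay between the H\"older exponents and the Sobolev gain on a $2$-manifold (which gives $W^{2,r}\hookrightarrow L^\infty$ as soon as $r>1$) must be organised so that each step produces a fixed, $p$-independent improvement in integrability, with the iteration terminating in finitely many steps before the exponents become incompatible.
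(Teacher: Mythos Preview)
Your plan has a genuine circularity at the step where you bound $\lambda$. After testing \eqref{eq:epsEL} against a smooth $\phi$ and integrating by parts, the expression for $\lambda$ contains the term $\int_\Sigma \phi\, Q\, w\,\mathrm{d}\mu$. At that point your only uniform information on $w$ is $\|w\|_{L^{p/(p-1)}}\lesssim 1$ and on $Q$ is $\|Q\|_{L^{p/2}}\lesssim 1$, but $\tfrac{2}{p}+\tfrac{p-1}{p}=\tfrac{p+1}{p}>1$, so H\"older does not even place $Qw$ in $L^1$; the claim that this integral is ``already controlled by the bounds above'' is therefore false. For the same reason your bootstrap in the last paragraph cannot start: with $w\in L^{p/(p-1)}$ the right-hand side $2\lambda H-2Qw+O(\sigma)$ is not a priori integrable, so no $L^s$ elliptic estimate applies at the first step, regardless of how you organise the exponents.

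The paper breaks this circularity by interposing an intermediate estimate of the form $\|w\|_{W^{1,2}}^2\lesssim 1+|\lambda|$ (Lemmas \ref{lem:Hqw2bounds}--\ref{lem:w2bounds}), obtained by testing \eqref{eq:epsEL} against $w$ itself and using the Michael--Simon--Sobolev inequality; the key auxiliary bound is $\int_\Sigma |Q|w^2\,\mathrm{d}\mu\lesssim \delta\|w^2\|_{L^2}+\delta^{-3}$. Only then does the paper test against a fixed smooth $\psi$ (Lemma \ref{lem:Lagrangebound}): the dangerous term is estimated as $\int|\psi Qw|\leq\int|Q|+\int\psi^2|Q|w^2\lesssim 1+\delta\|w^2\|_{L^2}+\delta^{-3}\lesssim 1+\delta|\lambda|$, and the small $\delta|\lambda|$ is absorbed into the left-hand side. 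After $|\lambda|$ and $\|w\|_{W^{1,2}}$ are bounded, the paper still does not run a naive bootstrap; it first upgrades $\nabla w$ to $L^{2+\delta}$ by hand, then uses Bochner's formula (which is where the Gauss curvature's limited integrability enters) to reach $W^{2,2}$, and only afterwards applies standard elliptic regularity to reach $W^{2,q}$. Your proposal is missing precisely this $W^{1,2}$--in--terms--of--$\lambda$ step, which is what makes both the $\lambda$ bound and the subsequent regularity gain possible.
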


\noindent We begin our path to proving Proposition \ref{prop:finalunifbounds} with the following lemma:
\begin{lem}\label{lem:simplebounds}
Let $q < \infty$ be given. For all $p \geq q$ sufficiently large and $\varepsilon$ sufficiently small, $\normm{f}_{W^{2,q}}$ is uniformly bounded. For all $p \geq 2q$ sufficiently large and $\varepsilon$ sufficiently small, the following quantities are uniformly bounded:
\[
\normm{H}_{L^p}, \normm{Q}_{L^q}, \normm{w}_{L^{p'}}, \normm{Hw}_{L^1}.
\]
\end{lem}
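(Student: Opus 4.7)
The plan is to exploit the minimising property of $f = f_{p,\varepsilon}$ to first pin down $h = h_{p,\varepsilon}$, then propagate bounds to everything else. Starting from $\mathcal{M}_p^{\sigma,\varepsilon}[f;f_0] \leq \mathcal{M}_p^{\sigma,\varepsilon}[f_0;f_0]$ and noting that by monotone convergence the right-hand side tends to $\mathcal{M}_\infty[f_0]$ as $\varepsilon \to 0$ and $p \to \infty$, we obtain a uniform upper bound on $h$. The definition of $h$ then gives $\normm{\xi H}_{L^p}^p \leq \int_\Sigma ((\xi H)^2 + \varepsilon)^{p/2}\,\mathrm{d}\mu = h^p A$, and since $\xi \geq 1$ by \eqref{cond:alph2}, this yields $\normm{H}_{L^p} \lesssim 1$.

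For the $W^{2,q}$ bound on $f$, I would apply Theorem \ref{thm:KLL} at exponent $q$ (for $q > 2$; the case $q \leq 2$ follows from Hölder on the fixed-area domain). For $p \geq q$ the $L^q$ bound on $H$ is inherited from the $L^p$ bound via Hölder, and the Willmore bound $\int H^2\,\mathrm{d}\mu < 8\pi$ follows uniformly from the low-energy hypothesis by a further Hölder estimate. Theorem \ref{thm:KLL} then gives weak $W^{2,q}$ subsequential convergence, and uniqueness of the limit via Proposition \ref{prop:penconv} promotes this to weak convergence of the whole sequence, which is therefore uniformly bounded. For $Q$, I would break it into its four pieces: $H^2$ is controlled in $L^{p/2}$ by the previous step; $\varepsilon/\xi^2$ is bounded pointwise; for $K$, the Gauss identity $|II|^2 = 4H^2 - 2K$ gives $|K| \lesssim H^2 + |II|^2$, with $|II|$ expressible in local graph coordinates in terms of $|D^2 f|$ (using the uniform graph radius from Theorem \ref{thm:KLL}) so that $\normm{K}_{L^{p/2}} \lesssim \normm{f}_{W^{2,p}}^2 + \normm{H}_{L^p}^2 \lesssim 1$; and for $HD_\nu\xi/\xi$, the strong $C^0$ convergence of the immersions keeps $f(\Sigma)$ inside a common compact set on which $\xi \in W^{2,\infty}_{\mathrm{loc}}$ gives uniform bounds, producing $\normm{HD_\nu\xi/\xi}_{L^p} \lesssim \normm{H}_{L^p}$. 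Summing and using $L^{p/2} \hookrightarrow L^q$ for $p \geq 2q$ on a fixed-area domain yields $\normm{Q}_{L^q} \lesssim 1$.

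For $\normm{w}_{L^{p'}}$, the key observation is the pointwise inequality
\[
|w| \leq h^{1-p}\bigl((\xi H)^2 + \varepsilon\bigr)^{(p-1)/2}\xi,
\]
which follows from $\xi^2 |H| = \xi|\xi H| \leq \xi\sqrt{(\xi H)^2 + \varepsilon}$. Raising to the $p'$-th power and integrating, the factor $h^{-p}$ cancels exactly with $\int_\Sigma((\xi H)^2+\varepsilon)^{p/2}\,\mathrm{d}\mu = h^p A$, leaving
\[
\normm{w}_{L^{p'}}^{p'} \lesssim \Bigl(\sup_{f(\Sigma)}\xi\Bigr)^{p/(p-1)} A,
\]
which is uniformly bounded since $f(\Sigma)$ sits in a fixed compact set and $p/(p-1) \to 1$. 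For $\normm{Hw}_{L^1}$, observe that $Hw \geq 0$ pointwise and $(\xi H)^2 \leq (\xi H)^2 + \varepsilon$, so $Hw \leq h^{1-p}\bigl((\xi H)^2+\varepsilon\bigr)^{p/2}$, and integrating yields $\normm{Hw}_{L^1} \leq hA$, uniformly bounded by the bound on $h$.

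The main obstacle is really bookkeeping: ensuring every constant appearing in the chain of estimates is independent of both $p$ and $\varepsilon$ throughout the allowed range $p > \hat{p}$, $\varepsilon < \hat{\varepsilon}(p)$. The two delicate ingredients are uniform control of $\xi$ and its first derivatives on $f(\Sigma)$—which relies on Proposition \ref{prop:penconv} to trap all immersions in one compact set—and the uniform lower bound on the graph radius needed to pass from $W^{2,q}$ control of $f$ to $L^q$ control of $|II|$ (and hence $K$). Both issues are resolved by combining the low-energy hypothesis (to keep the Willmore energy strictly below $8\pi$) with the $\varepsilon$- and $p$-independent threshold $\tilde{\varepsilon}$ from Lemma \ref{lem:epsminexist}, ensuring Theorem \ref{thm:KLL} applies uniformly.
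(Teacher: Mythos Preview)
Your proposal is essentially correct and follows the same route as the paper: minimality gives the $h$ and $\normm{H}_{L^p}$ bounds, Theorem \ref{thm:KLL} gives the $W^{2,q}$ bound on $f$, and the remaining bounds follow from the definitions and H\"older. Your treatment of $\normm{w}_{L^{p'}}$ and $\normm{Hw}_{L^1}$ via the pointwise inequalities is in fact more explicit than the paper's, which simply says these ``follow from the definition of $w$ and the $\normm{H}_{L^p}$ bound''.

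One small point: your derivation of the $W^{2,q}$ bound is slightly muddled. You invoke Proposition \ref{prop:penconv} to get uniqueness of the limit and then ``convergence of the whole sequence'', but Proposition \ref{prop:penconv} concerns the $\varepsilon=0$ minimisers $f_p$ as $p\to\infty$, not the two-parameter family $f_{p,\varepsilon}$, and in any case there is no single sequence here but a family indexed by $(p,\varepsilon)$. The paper instead argues by contradiction: if the bound failed, one could extract a sequence with norms tending to infinity, yet Theorem \ref{thm:KLL} would yield a weakly convergent (hence norm-bounded) subsequence. That is all you need; uniqueness of limits plays no role.
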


\noindent Here the quantity $p'$ denotes the arithmetic conjugate of $p$, given by
\[
p' = \frac{p}{p-1}
\]
so that
\[
\frac{1}{p} + \frac{1}{p'} = 1
\]
\noindent which is particularly useful in H\"{o}lder's inequality. \\
\begin{proof}[Proof of Lemma \ref{lem:simplebounds}]
The bound on $\normm{f}_{W^{2,q}}$ can be proved by contradiction: suppose instead that there exists a sequence $(p_k,\varepsilon_k)$ of immersions $f_k$ minimising $\mathcal{M}_{p_k}^{\sigma,\varepsilon_k}[\,\bigcdot\, ; f_0]$ minimisers with each $p_k > q$ and $\varepsilon_k < \hat{\varepsilon}(p_k)$ such that the real sequence $(\normm{f_k}_{W^{2,q}})$ tends to infinity. Without loss of generality, assume $\hat{\varepsilon}(p_k) \leq \tilde{\varepsilon}/2$ for all $k$. Using the same calculations as shown in Section \ref{subsec:epsapprox}, we see that $(f_k)$ satisfies the assumptions of Theorem \ref{thm:KLL} where in the statement of the theorem we take $p = q$ and the factor of $1/4$ is replaced by $\tilde{\varepsilon}/2$. The sequence $(f_k)$ must then admit a weakly convergent subsequence, hence a norm-bounded subsequence, which gives the contradiction. \\
The bound on $\normm{H}_{L^p}$ comes from the fact that $f$ is minimal for $\mathcal{M}_{p}^{\sigma,\varepsilon}[\,\bigcdot\, ; f_0]$. Indeed, by H\"{o}lder's inequality, the definitions of the functionals involved and the minimality assumption on $f$ we have that
\[
\normm{H}_{L^p} \leq A^{\frac{1}{p}} \mathcal{M}_{p}^{\sigma,\varepsilon}[ f ; f_0] \leq \textnormal{max}(1,A) \mathcal{M}_{p}^{\sigma,\varepsilon}[ \tilde{f} ; f_0] \leq \textnormal{max}(1,A) \mathcal{M}_{\infty}^{\sigma,\varepsilon}[ \tilde{f} ; f_0] 
\]
for some given immersion $\tilde{f}$. \\
To get the $\normm{Q}_{L^q}$ bound, we look at the separate components of $Q$, which we recall is given by
\[
Q = 2H^2 - K - \frac{2}{p} \left( H^2 + \frac{\varepsilon}{\xi^2} \right) + \frac{H D_{\nu} \xi}{\xi}.
\]
The $H^2$ terms are uniformly bounded in $L^q$ by H\"{o}lder's inequality, the fact that $p \geq 2q$ and the existing uniform bound on $\normm{H}_{L^p}$. The $\xi^{-1}$, $\xi^{-2}$ and $D_\nu \xi$ terms are bounded by conditions \eqref{cond:alph1}--\eqref{cond:alph2}. The $K$ term is uniformly $L^q$ bounded because $K$ depends on (quadratic terms in the) second derivatives of $f$ and these are controlled by the bounds on the immersion $f$ established earlier in the lemma. \\
Finally, the bounds on $\normm{w}_{L^{p'}}$ and $\normm{Hw}_{L^1}$ follow from the definition of $w$ and the $\normm{H}_{L^p}$ bound.
\end{proof}

\noindent It is also not too difficult to show the following bounds:
\begin{lem}\label{lem:Hqw2bounds}
For all $p \geq 16$ sufficiently large and $\varepsilon$ sufficiently small, the inequalities
\[
\int_\Sigma H^2 w^2 \, \mathrm{d}\mu, \int_\Sigma \vert H \vert w^2 \, \mathrm{d}\mu, \int_\Sigma w^2 \, \mathrm{d}\mu, \int_\Sigma \vert Q \vert w^2 \, \mathrm{d}\mu \lesssim \delta \normm{w^2}_{L^2} + \delta^{-3}
\]
hold for any $\delta > 0$.
\end{lem}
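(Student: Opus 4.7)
All four inequalities follow from the same three-step strategy. First, I would apply H\"older's inequality to split the integrand into a factor controlled in a high-exponent Lebesgue space by Lemma~\ref{lem:simplebounds} (namely $\normm{H}_{L^p}$ or $\normm{Q}_{L^{p/2}}$) times a single power $w^2$, reducing each estimate to bounding a norm $\normm{w^2}_{L^r}$ for some $r \in (1,2)$. Second, I would interpolate $\normm{w^2}_{L^r}$ between the low-exponent endpoint $\normm{w^2}_{L^{p'/2}} = \normm{w}_{L^{p'}}^2$, which is uniformly bounded by Lemma~\ref{lem:simplebounds}, and the high-exponent endpoint $\normm{w^2}_{L^2}$, which is the quantity to be absorbed. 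Third, I would apply Young's inequality in the form $x^{1-\theta} \leq \delta x + C\delta^{-(1-\theta)/\theta}$ (valid for $0 < \theta < 1$) to obtain a bound of the required shape.

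Concretely, for $\int_\Sigma H^2 w^2\,\mathrm{d}\mu$ I would use the H\"older pair $(p/2,\, p/(p-2))$, obtaining
\[
\int_\Sigma H^2 w^2 \, \mathrm{d}\mu \leq \normm{H}_{L^p}^2 \, \normm{w^2}_{L^{p/(p-2)}}.
\]
Setting $s = p/(2(p-1))$, standard Lebesgue interpolation yields $\normm{w^2}_{L^{p/(p-2)}} \leq \normm{w^2}_{L^s}^{\theta} \normm{w^2}_{L^2}^{1-\theta}$ with $\theta = (p-4)/(3p-4)$. The other three integrals are handled analogously: $\int_\Sigma |H|w^2\,\mathrm{d}\mu$ uses the pair $(p,\, p/(p-1))$ and gives $\theta = (p-2)/(3p-4)$; $\int_\Sigma w^2\,\mathrm{d}\mu = \normm{w^2}_{L^1}$ skips the H\"older step and goes directly to interpolation between $L^s$ and $L^2$, yielding $\theta = p/(3p-4)$; and $\int_\Sigma |Q|w^2\,\mathrm{d}\mu$ uses $(p/2,\, p/(p-2))$ together with the uniform $\normm{Q}_{L^{p/2}}$ bound and reproduces the exponents of the first estimate.

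The main bookkeeping point, which is also the only mildly technical issue, is to verify that the Young exponent $(1-\theta)/\theta$ never exceeds $3$ once $p \geq 16$. A short calculation shows that in each case $\theta$ is monotonically increasing in $p$, tending to $1/3$ as $p \to \infty$, so that $(1-\theta)/\theta$ is decreasing in $p$; the worst case is the $H^2 w^2$ estimate at the threshold $p=16$, where $(1-\theta)/\theta = 8/3 < 3$. One must also check that the interpolation endpoints are correctly ordered, i.e.\ that $s < r$ for each target $r \in \{1,\, p/(p-1),\, p/(p-2)\}$, which holds as soon as $p > 2$. Beyond this, the proof is essentially a routine application of H\"older, interpolation, and Young's inequality, combined with the uniform bounds already supplied by Lemma~\ref{lem:simplebounds}.
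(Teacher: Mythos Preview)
Your argument is correct and rests on the same ingredients as the paper's (H\"older, the bounds of Lemma~\ref{lem:simplebounds}, Young's inequality), but the execution differs. The paper avoids $p$-dependent exponents altogether by using a single three-factor H\"older split with \emph{fixed} exponents,
\[
\int_\Sigma H^2 w^2 \, \mathrm{d}\mu \;\leq\; \normm{H^2}_{L^8}\,\normm{w^{1/2}}_{L^2}\,\normm{w^{3/2}}_{L^{8/3}},
\]
where the first two factors are controlled by $\normm{H}_{L^{16}}$ and $\normm{w}_{L^1}$ (both uniformly bounded once $p\geq 16$) and the third equals $\normm{w^2}_{L^2}^{3/4}$; Young's inequality then lands exactly on $\delta^{-3}$ with no further bookkeeping, which also explains the threshold $p\geq 16$. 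Your route via two-factor H\"older plus interpolation between $\normm{w}_{L^{p'}}^2$ and $\normm{w^2}_{L^2}$ yields a $p$-dependent exponent $1-\theta\leq 3/4$, which still suffices but requires the monotonicity check you sketched.

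Two small corrections. First, your monotonicity claim is not quite right in the $\int w^2$ case: there $\theta = p/(3p-4)$ is \emph{decreasing} in $p$, so $(1-\theta)/\theta$ increases from $7/4$ toward $2$; the conclusion $(1-\theta)/\theta<3$ nonetheless holds in every case. Second, for the $|Q|w^2$ term you invoke a uniform bound on $\normm{Q}_{L^{p/2}}$, but Lemma~\ref{lem:simplebounds} only provides uniform-in-$p$ control of $\normm{Q}_{L^q}$ for \emph{fixed} $q$. You should instead pair with a fixed exponent, e.g.\ $\normm{Q}_{L^8}$ (available once $p\geq 16$), and then interpolate $\normm{w^2}_{L^{8/7}}$ as before; the resulting Young exponent is again at most $8/3$.
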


\begin{proof}[Proof of Lemma \ref{lem:Hqw2bounds}]
By H\"{o}lder's inequality, we have that
\begin{align*}
\int_\Sigma H^2 w^2 \, \mathrm{d}\mu &\leq \normm{H^2}_{L^8} \normm{w^{\frac{1}{2}}}_{L^2} \normm{w^{\frac{3}{2}}}_{L^{\frac{8}{3}}}. 
\intertext{The first two terms on the right-hand-side are uniformly bounded by Lemma \ref{lem:simplebounds} and H\"{o}lder's inequality. We therefore find that}
\int_\Sigma H^2 w^2 \, \mathrm{d}\mu &\lesssim \normm{w^{\frac{3}{2}}}_{L^{\frac{8}{3}}},
\intertext{and so Young's inequality implies that}
\int_\Sigma H^2 w^2 \, \mathrm{d}\mu &\lesssim \delta \normm{w^2}_{L^2} + \delta^{-3}.
\end{align*}
Essentially the same argument shows the claim for the remaining integrals.
\end{proof}

\noindent The compactness of $\Sigma$ allows us to make use of the Michael-Simon-Sobolev inequality \cite{Michael1973}, which we note here in the specific form we will apply it in: \\
\textbf{Michael-Simon-Sobolev Inequality}. \textit{The inequality
\[
\normm{w}_{L^2} \lesssim \left( \normm{\nabla w}_{L^1} + \normm{H w}_{L^1} \right)
\]
holds, where the norms and gradient are taken with respect to the immersion which induces $w$.} \\
The specific constant in the Michael-Simon-Sobolev inequality depends only on the dimension of the surface being immersed (in our case, two), not on the choice of function, nor the specific choice of immersion or even the topology of the surface. \\

\noindent We can now improve the uniform regularity of $w$ by obtaining a $W^{1,2}$ bound in terms of $\lambda$.
\begin{lem}\label{lem:w2bounds}
For all $p \geq 16$ sufficiently large and $\varepsilon$ sufficiently small, the inequality
\begin{equation}\label{eq:testreqs}
\normm{w}_{W^{1,2}}^2 \lesssim 1 + \vert \lambda \vert
\end{equation}
holds.
\end{lem}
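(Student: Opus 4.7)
The plan is to test the Euler-Lagrange equation \eqref{eq:epsEL} against $w$ itself (which is legitimate because Proposition \ref{prop:regularity} gives $w \in W^{2,2}(\Sigma)$) and integrate by parts. Since $\Sigma$ is closed this produces the identity
\[
\frac{1}{2} \int_\Sigma |\nabla w|^2 \, \mathrm{d}\mu = \int_\Sigma Q w^2 \, \mathrm{d}\mu - \lambda \int_\Sigma Hw \, \mathrm{d}\mu + \sigma \int_\Sigma d(D_\nu d) w \, \mathrm{d}\mu - \sigma \int_\Sigma d^2 H w \, \mathrm{d}\mu.
\]
I will then estimate the four terms on the right-hand side. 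The second is bounded by $|\lambda| \cdot \normm{Hw}_{L^1} \lesssim |\lambda|$ using Lemma \ref{lem:simplebounds}; the fourth is bounded by $\sigma \normm{d}_{L^\infty}^2 \normm{Hw}_{L^1} \lesssim 1$ thanks to the uniform boundedness of $d$; and the third, by $|Dd| \leq C$ together with $\normm{d}_{L^\infty} \lesssim 1$ (Corollary \ref{cor:distconv}), is controlled by $\normm{w}_{L^1} \lesssim \normm{w}_{L^2}$. The first term is the most delicate: Lemma \ref{lem:Hqw2bounds} yields
\[
\left| \int_\Sigma Q w^2 \, \mathrm{d}\mu \right| \lesssim \delta \normm{w^2}_{L^2} + \delta^{-3}
\]
for any $\delta > 0$, so I will need to control $\normm{w^2}_{L^2} = \normm{w}_{L^4}^2$ by $\normm{\nabla w}_{L^2}$ up to lower order terms.

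To do this I would apply the Michael-Simon-Sobolev inequality twice. Applied to $w$ directly, it gives
\[
\normm{w}_{L^2} \lesssim \normm{\nabla w}_{L^1} + \normm{Hw}_{L^1} \lesssim \normm{\nabla w}_{L^2} + 1,
\]
using H\"older's inequality and the uniform bound on $\normm{Hw}_{L^1}$. Applied to $w^2$, together with Cauchy-Schwarz on $\nabla(w^2) = 2w\nabla w$ and Lemma \ref{lem:Hqw2bounds} applied to $\int |H| w^2$, it yields
\[
\normm{w^2}_{L^2} \lesssim \normm{w}_{L^2} \normm{\nabla w}_{L^2} + \delta \normm{w^2}_{L^2} + \delta^{-3},
\]
from which, absorbing the $\delta$-term, $\normm{w}_{L^4}^2 \lesssim \normm{w}_{L^2}\normm{\nabla w}_{L^2} + 1$. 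Combining the two Sobolev inequalities and an application of Young's inequality delivers
\[
\normm{w}_{L^4}^2 \lesssim \normm{\nabla w}_{L^2}^2 + 1.
\]

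Substituting all these estimates back into the integrated Euler-Lagrange identity gives
\[
\tfrac{1}{2}\normm{\nabla w}_{L^2}^2 \lesssim \delta\bigl(\normm{\nabla w}_{L^2}^2 + 1\bigr) + \delta^{-3} + |\lambda| + \normm{\nabla w}_{L^2} + 1.
\]
Choosing $\delta$ small enough to absorb the $\delta\normm{\nabla w}_{L^2}^2$ term on the left, and using Young's inequality to absorb the linear $\normm{\nabla w}_{L^2}$ term, closes the estimate for the gradient: $\normm{\nabla w}_{L^2}^2 \lesssim 1 + |\lambda|$. The corresponding bound on $\normm{w}_{L^2}^2$ then follows from the first Michael-Simon-Sobolev inequality above, completing the desired estimate on $\normm{w}_{W^{1,2}}^2$.

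The main obstacle, as indicated above, is controlling the $\int Q w^2$ term: a naive estimate would require an a priori $L^4$ bound on $w$, which is not yet available uniformly in $p$ and $\varepsilon$. The trick is to use the Michael-Simon-Sobolev inequality on $w^2$ (rather than on $w$), combined with the specific form of the $\delta$-inequality in Lemma \ref{lem:Hqw2bounds}, to trade the $L^4$ norm against $\normm{\nabla w}_{L^2}$ plus harmless constants. Everything else is a careful bookkeeping exercise in H\"older's and Young's inequalities.
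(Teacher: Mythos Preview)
Your proposal is correct and follows essentially the same approach as the paper: test the Euler-Lagrange equation \eqref{eq:epsEL} against $w$, bound the resulting terms using Lemma~\ref{lem:simplebounds} and Lemma~\ref{lem:Hqw2bounds}, and handle the troublesome $\int |Q|w^2$ term by applying the Michael--Simon--Sobolev inequality to both $w$ and $w^2$ to obtain $\normm{w^2}_{L^2} \lesssim \normm{\nabla w}_{L^2}^2 + 1$, then absorb. The only difference is organisational --- the paper first establishes $\normm{w^2}_{L^2} \lesssim \normm{\nabla w}_{L^2}^2 + \delta^{-3}$ and then invokes the Euler-Lagrange equation, whereas you start from the tested equation and then feed in the Sobolev inequality --- and one minor citation slip: the uniform $L^\infty$ bound on $d$ for the $f_{p,\varepsilon}$ follows from \eqref{eq:distineqs} together with the fact that the immersions stay in a bounded region (cf.\ the proof of Lemma~\ref{lem:epsminexist}), not from Corollary~\ref{cor:distconv}, which concerns the $\varepsilon = 0$ limits $f_p$.
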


\begin{proof}[Proof of Lemma \ref{lem:w2bounds}]
By the Michael-Simon-Sobolev inequality and H\"{o}lder's inequality we see that
\begin{align}
\normm{w^2}_{L^2} &\lesssim \normm{w}_{L^2} \normm{\nabla w}_{L^2} + \normm{H w^2}_{L^1} \nonumber \\
\intertext{and by Lemma \ref{lem:Hqw2bounds}, we can bound the last term to get}
\normm{w^2}_{L^2} &\lesssim \normm{w}_{L^2} \normm{\nabla w}_{L^2} + \delta\normm{w^2}_{L^2} + \delta^{-3} \nonumber  \\
\intertext{then choose $\delta$ to be small so that the inequality becomes}
\normm{w^2}_{L^2} &\lesssim \normm{w}_{L^2} \normm{\nabla w}_{L^2} + \delta^{-3}. \nonumber  \\
\intertext{Applying the Michael-Simon-Sobolev inequality again, we find that}
\normm{w^2}_{L^2} &\lesssim \left( \normm{\nabla w}_{L^1} + \normm{H w}_{L^1} \right) \normm{\nabla w}_{L^2} + \delta^{-3} \nonumber 
\intertext{then use Lemma \ref{lem:simplebounds} and H\"{o}lder's and Young's inequalities to write}
\normm{w^2}_{L^2} &\lesssim \normm{\nabla w}_{L^2}^2 + \delta^{-3}. \label{eq:messy1} \\
\intertext{Integrating by parts, this becomes}
\normm{w^2}_{L^2} &\lesssim \int_\Sigma \,\vert w \Delta w \vert \, \mathrm{d}\mu + \delta^{-3}. \nonumber 
\end{align}
We have that
\begin{align*}
w \Delta w &= - 2 Q w^2 + 2 \lambda H w - 2 \sigma w d D_\nu d + 2 \sigma d^2 w H
\end{align*}
from substituting in the Euler-Lagrange equation \eqref{eq:epsEL}, and so
\begin{align*}
\normm{\nabla w}_{L^2}^2 = \int_\Sigma w \Delta w \, \mathrm{d}\mu &\lesssim\int_\Sigma \vert Q \vert w^2 + \vert \lambda \vert w H + \sigma \vert w d D_\nu d \vert + \sigma \vert d^2 w H \vert \, \mathrm{d}\mu.
\end{align*}
The first term can be dealt with by Lemma \ref{lem:Hqw2bounds}, the second term is bounded by a constant multiple of $\lambda$ from Lemma \ref{lem:simplebounds}, and the third and fourth terms are bounded by Lemma \ref{lem:simplebounds} and the boundedness of $d$ and $D_\nu d$ (cf. \eqref{eq:distineqs}). Combining these, we have the inequality
\begin{equation}\label{eq:messy2}
\normm{\nabla w}_{L^2}^2 \lesssim \delta \normm{w^2}_{L^2} + \delta^{-3} + \vert \lambda \vert
\end{equation}
and this means inequality \eqref{eq:messy1} becomes
\[
\normm{w^2}_{L^2} \lesssim \delta^{-3} + \vert \lambda \vert
\]
which provides a uniform $L^2$ bound on $w$ in the form we desired after considering the inequality $\normm{w}_{L^2}^2 \lesssim \normm{w^2}_{L^2}$. Because we have a uniform $L^2$ bound on $w^2$ in terms of $\vert \lambda \vert$, inequality \eqref{eq:messy2} provides a uniform $L^2$ bound on $\nabla w$ of the same form, so we have the desired $W^{1,2}$ bound on $w$.
\end{proof}

\noindent To obtain a uniform $W^{1,2}$ bound on $w$, it remains only to bound $\vert \lambda \vert$. We first need the following result:
\begin{lem}\label{lem:testfnexist}
There exists a test function $\psi \in C^\infty_c(\Sigma)$ such that whenever $p$ is sufficiently large and $\varepsilon$ sufficiently small, then
\[
1 \leq \int_\Sigma \psi H \, \mathrm{d}\mu \leq 3.
\]
\end{lem}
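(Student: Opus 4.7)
The strategy is to choose $\psi$ by reference to the limit immersion $f_0$, arranging $\int_\Sigma \psi H_0 \, \mathrm{d}\mu_0 = 2$, and then to transfer this to the approximating immersions $f_{p,\varepsilon}$ via the convergence results of Lemma \ref{lem:epsconvimm} and Proposition \ref{prop:penconv}. The first task is the existence of a suitable $\psi$. Since $f_0$ is a closed immersion into $\mathbb{R}^3$, the Willmore inequality $\int_\Sigma H_0^2 \, \mathrm{d}\mu_0 \geq 4\pi$ (noted earlier in the paper) forces $H_0$ to be nonzero on a set of positive $\mu_0$-measure, so at least one of $\{H_0 > 0\}$ and $\{H_0 < 0\}$ carries a positive integral of $H_0$. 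Mollifying a suitable indicator of such a set and rescaling then produces a $\psi \in C^\infty_c(\Sigma)$ with $\int_\Sigma \psi H_0 \, \mathrm{d}\mu_0 = 2$; note that $\psi$ is fixed once and for all at this stage, independently of $p$ and $\varepsilon$.

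Next I would transfer this identity to the approximating sequence. Proposition \ref{prop:penconv} gives $f_p \to f_0$ weakly in $W^{2,q}$ for every finite $q$, which by Rellich-Kondrachov yields strong $C^{1,\beta}$ convergence of the immersions (so in particular uniform convergence of $\sqrt{\det g_p}$), while Remark \ref{rem:meancurvconv} gives weak $L^q$ convergence $H_p \rightharpoonup H_0$. Pairing these in local coordinates shows $\int_\Sigma \psi H_p \, \mathrm{d}\mu_p \to 2$ as $p \to \infty$. For each fixed $p$, an entirely analogous application of Lemma \ref{lem:epsconvimm} gives $\int_\Sigma \psi H_{p,\varepsilon} \, \mathrm{d}\mu_{p,\varepsilon} \to \int_\Sigma \psi H_p \, \mathrm{d}\mu_p$ as $\varepsilon \to 0$. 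A two-stage selection -- first choose $\hat{p}$ so that $|\int_\Sigma \psi H_p \, \mathrm{d}\mu_p - 2| < 1/2$ whenever $p > \hat{p}$, then for each such $p$ choose $\hat{\varepsilon}(p)$ so that $\int_\Sigma \psi H_{p,\varepsilon} \, \mathrm{d}\mu_{p,\varepsilon}$ lies within $1/2$ of $\int_\Sigma \psi H_p \, \mathrm{d}\mu_p$ -- gives $\bigl|\int_\Sigma \psi H \, \mathrm{d}\mu - 2\bigr| \leq 1$, hence the required bounds.

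The only mildly delicate point is that the convergence happens in two nested stages rather than as a joint limit in $(p,\varepsilon)$, but this fits exactly into the ``$p > \hat{p}$, $\varepsilon < \hat{\varepsilon}(p)$'' framework set up in Section \ref{subsec:uniformbounds}. The key move is choosing $\psi$ before invoking any convergence: this avoids the need for uniform pointwise control on the $H_{p,\varepsilon}$ (which we do not yet have at this stage of the paper), since weak $L^q$ convergence of $H_{p,\varepsilon}$ paired against a fixed smooth $\psi$ and the strongly convergent area element is enough.
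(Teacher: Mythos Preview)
Your proposal is correct and follows essentially the same approach as the paper: fix $\psi$ with $\int_\Sigma \psi H_0\,\mathrm{d}\mu_0 = 2$ (possible since $H_0 \not\equiv 0$ on a closed surface), then use the two-stage weak $W^{2,q}$ convergence $f_{p,\varepsilon} \rightharpoonup f_p \rightharpoonup f_0$ together with Remark~\ref{rem:meancurvconv} to transfer the integral, choosing thresholds $\hat p$ and $\hat\varepsilon(p)$ so that the total error is at most $1$. The only cosmetic difference is that you invoke the Willmore inequality to rule out $H_0 \equiv 0$, whereas the paper cites the nonexistence of compact minimal surfaces.
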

\begin{proof}
For $\varepsilon > 0$ sufficiently small, the $W^{2,p}$ convergence $f_{p,\varepsilon} \rightharpoonup f_p$ as $\varepsilon \rightarrow 0$ implies that
\[
\int_\Sigma \psi H_{p} \, \mathrm{d}\mu_{p} - \delta \leq \int_\Sigma \psi H_{p,\varepsilon} \, \mathrm{d}\mu_{p,\varepsilon} \leq \int_\Sigma \psi H_{p} \, \mathrm{d}\mu_{g_{p}} + \delta
\]
for a given $\delta > 0$, where we have made the dependences on $p$ and $\varepsilon$ explicit. Then, taking $p$ sufficiently large, the $W^{2,q}$ convergence $f_p \rightharpoonup f_0$ implies similarly that
\[
\int_\Sigma \psi H_{0} \, \mathrm{d}\mu_{0} - \delta \leq \int_\Sigma \psi H_{p} \, \mathrm{d}\mu_{p} \leq \int_\Sigma \psi H_{0} \, \mathrm{d}\mu_{0} + \delta,
\]
so that
\[
\int_\Sigma \psi H_{0} \, \mathrm{d}\mu_{0} - 2\delta \leq \int_\Sigma \psi H_{p,\varepsilon} \, \mathrm{d}\mu_{p,\varepsilon} \leq \int_\Sigma \psi H_{0} \, \mathrm{d}\mu_{0} + 2\delta.
\]
Because $\Sigma$ is compact, we are implicitly assuming that no minimal surfaces are admissible (because of the well-known result that there are no compact minimal surfaces), i.e. we are assuming that it is not possible to have $H_0 \equiv 0$. Thus we may choose $\psi$ such that $\int_\Sigma \psi H_{0} \, \mathrm{d}\mu_{g_0} = 2$. Taking $\delta = 1/2$ then implies the result.
\end{proof}

\begin{lem}\label{lem:Lagrangebound}
For all $p$ sufficiently large and $\varepsilon$ sufficiently small, the uniform bound
\[
\vert \lambda \vert\, \leq C
\]
holds.
\end{lem}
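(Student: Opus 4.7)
The plan is to test the Euler--Lagrange equation \eqref{eq:epsEL} against the fixed smooth function $\psi$ provided by Lemma \ref{lem:testfnexist}, whose key feature is the two-sided uniform bound $1 \leq \int_\Sigma \psi H \, \mathrm{d}\mu \leq 3$. This choice isolates $\lambda$ on one side of the resulting integral identity and lets me estimate it in terms of norms already controlled (possibly by $\sqrt{1+|\lambda|}$) in Lemmas \ref{lem:simplebounds} and \ref{lem:w2bounds}.

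Concretely, I would multiply \eqref{eq:epsEL} by $\psi$, integrate over $\Sigma$, and integrate by parts on the Laplacian term. Since $\Sigma$ is closed, no boundary terms arise and $w \in W^{2,2}(\Sigma)$ (Proposition \ref{prop:regularity}) makes the integration by parts legitimate, giving
\begin{equation*}
\lambda \int_\Sigma \psi H \, \mathrm{d}\mu = -\frac{1}{2} \int_\Sigma \nabla \psi \cdot \nabla w \, \mathrm{d}\mu + \int_\Sigma Q w \psi \, \mathrm{d}\mu + \sigma \int_\Sigma \psi \, d\, D_\nu d \, \mathrm{d}\mu - \sigma \int_\Sigma \psi d^2 H \, \mathrm{d}\mu.
\end{equation*}

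I then estimate each right-hand-side term. For the first, Cauchy--Schwarz and Lemma \ref{lem:w2bounds} give $|\int \nabla \psi \cdot \nabla w| \leq \|\nabla \psi\|_{L^2} \|\nabla w\|_{L^2} \lesssim \sqrt{1+|\lambda|}$; here $\|\nabla \psi\|_{L^2}$ (relative to the varying metric induced by $f$) is uniformly controlled by the $W^{2,q}$ bounds on the immersions from Lemma \ref{lem:simplebounds} (or directly by Corollary \ref{cor:lapbeltconv}). For the second, Hölder with exponents $2,2$ together with Lemmas \ref{lem:simplebounds} (applied with $q=2$, valid once $p \geq 4$) and \ref{lem:w2bounds} yields $|\int Q w \psi| \lesssim \|Q\|_{L^2} \|w\|_{L^2} \lesssim \sqrt{1+|\lambda|}$. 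The two penalisation terms are $O(1)$: $d$ and $D_\nu d$ are uniformly bounded by \eqref{eq:distineqs}, and $\|H\|_{L^1} \lesssim \|H\|_{L^2}$ is uniformly bounded by Lemma \ref{lem:simplebounds}.

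Combining these estimates with the lower bound $\int_\Sigma \psi H \, \mathrm{d}\mu \geq 1$ from Lemma \ref{lem:testfnexist} yields an inequality of the form $|\lambda| \leq C(1 + \sqrt{1+|\lambda|})$, and Young's inequality then absorbs the $\sqrt{|\lambda|}$ term into $|\lambda|/2$ on the left, producing the desired uniform bound. The only delicate step is the first one: I need to make sure that the metric-dependent $L^2$ norm of $\nabla \psi$ (for a fixed abstract $\psi$ on $\Sigma$) is uniformly controlled as $(p,\varepsilon)$ vary, which is why the earlier $W^{2,q}$ convergence of the immersions is needed. Everything else is a routine application of the bounds already assembled in Section \ref{subsec:uniformbounds}.
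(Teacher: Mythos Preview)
Your proof is correct and follows the same strategy as the paper's: test \eqref{eq:epsEL} against the $\psi$ from Lemma \ref{lem:testfnexist}, bound each resulting term via Lemmas \ref{lem:simplebounds} and \ref{lem:w2bounds} to obtain an inequality of the form $|\lambda| \lesssim 1 + (\text{sublinear in } |\lambda|)$, then absorb. The only tactical differences are that the paper integrates by parts twice (pairing $w$ with $\Delta\psi$ rather than $\nabla w$ with $\nabla\psi$) and handles the $Qw\psi$ term via Young's inequality together with Lemma \ref{lem:Hqw2bounds}, whereas your single integration by parts and direct H\"older bound $\|Q\|_{L^2}\|w\|_{L^2}\|\psi\|_{L^\infty}$ are slightly more streamlined.
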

\begin{proof}
Multiplying the Euler-Lagrange equation \eqref{eq:epsEL} by the test function $\psi$ from Lemma \ref{lem:testfnexist} and integrating by parts, we see that for appropriate $p, \varepsilon$ we have the inequality
\begin{align*}
\vert \lambda \vert \, &\leq \vert \lambda \vert \int_\Sigma \psi H \, \mathrm{d} \mu \\
&= \pm \int_\Sigma \frac{1}{2} w \Delta \psi + \psi Q w + \sigma d D_\nu d - \sigma d^2 H \, \mathrm{d} \mu \\
&\leq \int_\Sigma \vert w \Delta \psi \vert \, \mathrm{d}\mu + \int_\Sigma \vert \psi Q w \vert \, \mathrm{d} \mu + \sigma \int_\Sigma d \left( \vert D_\nu d \vert + d \vert H \vert \right) \, \mathrm{d}\mu \\
&:= I_1 + I_2 + I_3.
\end{align*}
By Young's inequality and the area bounds we find that
\begin{align*}
I_1 &\leq \delta \normm{w^2}_{L^1} + \delta^{-1} \int_\Sigma \vert \Delta \psi \vert^2 \, \mathrm{d}\mu \\
&\lesssim \delta \normm{w^2}_{L^2} + \delta^{-1} \\
&\lesssim \delta\vert \lambda \vert\, +\, \delta + \delta^{-1},
\end{align*}
where the bound on the integral of $\vert \Delta \psi \vert^2$ comes from the $W^{2,2}$ bound on the immersions $f_{p,\varepsilon}$ and we have used Lemma \ref{lem:w2bounds}. \\
We also find using Lemmas \ref{lem:simplebounds} and \ref{lem:Hqw2bounds} that
\begin{align*}
I_2 &\leq \int_\Sigma \vert Q \vert \, \mathrm{d}\mu + \int_\Sigma \psi^2 \vert Q \vert w^2 \, \mathrm{d}\mu \\
&\lesssim 1 + \delta \normm{w^2}_{L^2} + \delta^{-3} \\
&\lesssim 1 + \delta \vert \lambda \vert\, +\, \delta + \delta^{-3}.
\end{align*}
Finally, the uniform $L^\infty$ bounds on $d$ and $\vert D d \vert$ and the uniform $L^p$ bound on $H$ allows us to control $I_3$ by a constant: $I_3 \lesssim 1$. \\
We may now combine the inequalities for the three above integrals and obtain
\begin{align*}
\vert \lambda \vert\, &\lesssim 1 + \delta^{-3} + \delta \vert \lambda \vert
\end{align*}
from which we get the desired result by taking $\delta$ sufficiently small.
\end{proof}

\noindent By the Sobolev embedding theorem, the uniform $W^{1,2}$ bounds on $w$ imply uniform $L^q$ bounds on $w$ for any given $q < \infty$ (this also requires us to make use of the $L^\infty$ bounds we have on the metric components to control the constant in the embedding theorem). \\

\noindent To obtain the bounds we need on the second derivatives of $w$, we will require the following results: \\

\noindent \textbf{Bochner's Formula}. \textit{For any function $u \in W^{2,2}(\Sigma)$,}
\[
\int_\Sigma \left( \Delta u \right)^2 \, \mathrm{d}\mu = \int_\Sigma \left\vert \nabla^2 u \right\vert^2 \,\mathrm{d}\mu + \int_\Sigma K \left\vert \nabla u \right\vert^2 \, \mathrm{d}\mu.
\]
\noindent This formula can be found in, for example, the book \cite{lee2018introduction}. 

\begin{lem}\label{lem:D2ineq}
There exists a constant $C > 0$ such that for any function $u \in W^{2,2}(\Sigma)$ the inequality
\[
\left\vert \nabla \left( \left\vert \nabla u \right\vert^2 \right) \right\vert \leq C \left\vert \nabla u \right\vert \left\vert \nabla^2 u \right\vert
\]
is satisfied a.e. on $\Sigma$.
\end{lem}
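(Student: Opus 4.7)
The plan is to prove the stated bound by a direct pointwise computation on the Riemannian surface $(\Sigma,g)$ induced by the immersion, using the product rule, metric compatibility of the Levi-Civita connection, and the pointwise Cauchy--Schwarz inequality on tensors. Since smooth functions are dense in $W^{2,2}(\Sigma)$, I will first carry out the calculation for $u \in C^\infty(\Sigma)$ and then extend to $W^{2,2}$ by approximation, selecting a subsequence along which $u$ and its first two derivatives converge almost everywhere; the resulting inequality then passes to the limit pointwise a.e.

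For the smooth case, work in local coordinates on $\Sigma$. Using that $\nabla g = 0$, the Leibniz rule gives
\[
\nabla_k(|\nabla u|^2) \;=\; \nabla_k\bigl(g^{ij}\, \nabla_i u\, \nabla_j u\bigr) \;=\; 2\, g^{ij}\, (\nabla_k \nabla_i u)\, \nabla_j u,
\]
since $\nabla_k \nabla_i u$ is symmetric in $k,i$ (on a function, the Hessian is symmetric). Taking norms with respect to $g$,
\[
\bigl|\nabla(|\nabla u|^2)\bigr|^2 \;=\; 4\, g^{kl}\, g^{ij}\, g^{mn}\, (\nabla_k \nabla_i u)(\nabla_l \nabla_m u)\, \nabla_j u\, \nabla_n u.
\]
Fixing a point $x \in \Sigma$, diagonalise $g$ at $x$ and apply the pointwise Cauchy--Schwarz inequality on the tangent space $T_x \Sigma$ with the inner product induced by $g$: regard $\nabla_k \nabla_i u \cdot \nabla^j u$ (contraction in the second slot of the Hessian against $\nabla u$) as a $1$-tensor, whose $g$-norm is bounded by $|\nabla^2 u|\cdot |\nabla u|$. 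This yields
\[
\bigl|\nabla(|\nabla u|^2)\bigr| \;\leq\; 2\, |\nabla^2 u|\, |\nabla u|,
\]
so we may take $C = 2$.

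For the general $u \in W^{2,2}(\Sigma)$, pick a sequence $u_n \in C^\infty(\Sigma)$ with $u_n \to u$ in $W^{2,2}$. Passing to a subsequence, $u_n$, $\nabla u_n$, and $\nabla^2 u_n$ converge a.e. on $\Sigma$ to their respective weak limits. The inequality $|\nabla(|\nabla u_n|^2)| \leq 2 |\nabla^2 u_n||\nabla u_n|$ then survives in the limit a.e., noting that $\nabla(|\nabla u|^2) = 2\, g(\nabla^2 u, \nabla u)$ makes sense as a distribution which coincides a.e. with the classical expression by the Sobolev product rule.

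There is no substantive obstacle here: the argument is essentially a linear-algebra computation combined with a standard density/approximation step. The only care needed is in justifying that the Leibniz rule for $|\nabla u|^2$ holds in the $W^{2,2}$ setting, for which one can invoke the standard Sobolev calculus (the product of an $L^\infty_{\mathrm{loc}}$-unavailable first derivative with itself is fine because $u \in W^{2,2}$ on a compact surface gives $\nabla u \in L^q$ for all $q<\infty$ by Sobolev embedding on the two-dimensional $\Sigma$, so $|\nabla u|^2 \in W^{1,2}$ with the claimed distributional derivative).
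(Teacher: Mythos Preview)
Your proof is correct and follows the same approach as the paper, which simply takes normal coordinates at an arbitrary point and declares the inequality a straightforward computation; you spell out that computation (yielding the sharp constant $C=2$) and supply the density argument passing from smooth to $W^{2,2}$ functions, which the paper omits. One minor slip: your final claim that $|\nabla u|^2 \in W^{1,2}$ is slightly too strong---since $\nabla u$ is only in $L^q$ for $q<\infty$ (not $L^\infty$) and $\nabla^2 u \in L^2$, H\"older gives $\nabla(|\nabla u|^2) \in L^p$ for all $p<2$ but not $p=2$ in general---though this does not affect the argument, as $|\nabla u|^2 \in W^{1,1}$ suffices.
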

\begin{proof}[Proof of Lemma \ref{lem:D2ineq}]
Taking normal co-ordinates around a point $x_0 \in M$, the inequality can be seen to hold at $x_0$ from straightforward computations. Since $x_0$ was arbitrary, the lemma follows.
\end{proof}

\noindent We now possess the tools to show the following bound:

\begin{lem}\label{lem:W2qbounds}
Let $q < \infty$ be given. Then the quantity
\[
\int_\Sigma \vert \nabla^2 w \vert^q \, \mathrm{d}\mu
\]
is uniformly bounded for all $p \geq 2q$ sufficiently large and $\varepsilon$ sufficiently small.
\end{lem}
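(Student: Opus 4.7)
The plan is to derive the $L^q$ bound on $\nabla^2 w$ from an $L^q$ bound on $\Delta w$, with the latter coming directly from the Euler--Lagrange equation \eqref{eq:epsEL}.

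First I would rewrite \eqref{eq:epsEL} as $\Delta w = F$ with
\[
F := -2Qw + 2\lambda H - 2\sigma d\, D_\nu d + 2\sigma d^2 H,
\]
and show that $\|F\|_{L^q} \lesssim 1$. The penalisation terms are handled by \eqref{eq:distineqs} (the uniform $L^\infty$ bounds on $d$ and $|Dd|$; in fact $d \to 0$ uniformly by Corollary \ref{cor:distconv}), while the $\lambda H$ term is controlled by Lemmas \ref{lem:Lagrangebound} and \ref{lem:simplebounds}. For the main term $Qw$, observe that Lemmas \ref{lem:w2bounds} and \ref{lem:Lagrangebound} give a uniform $W^{1,2}$ bound on $w$, and since $\dim \Sigma = 2$ the Sobolev embedding $W^{1,2} \hookrightarrow L^r$ then produces a uniform bound $\|w\|_{L^r} \lesssim 1$ for every $r < \infty$. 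A H\"older splitting of the form $\|Qw\|_{L^q} \leq \|Q\|_{L^{q+\delta}} \|w\|_{L^{q(q+\delta)/\delta}}$ for small $\delta > 0$ then closes this estimate, since $\|Q\|_{L^{q+\delta}}$ is uniformly bounded by Lemma \ref{lem:simplebounds} once $p \geq 2(q+\delta)$, which is compatible with the hypothesis that $p \geq 2q$ is sufficiently large.

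Second, I would transfer the $L^q$ control from $\Delta w$ to $\nabla^2 w$ by $L^q$ elliptic regularity on $\Sigma$. On local coordinate patches of uniform size, whose existence is guaranteed by the graph-radius inequality \eqref{eq:graphrad} of Theorem \ref{thm:KLL}, the equation $\Delta w = F$ becomes a linear second-order elliptic PDE whose coefficients are the components of the metric, and by Corollary \ref{cor:lapbeltconv} these coefficients converge in $C^{0,\beta}$ and are therefore uniformly controlled in that norm. Standard interior Calder\'on--Zygmund estimates, followed by a patching argument, then deliver
\[
\|\nabla^2 w\|_{L^q} \lesssim \|\Delta w\|_{L^q} + \|w\|_{L^q} \lesssim 1
\]
with constants independent of $p$ and $\varepsilon$.

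The main difficulty is not in any single estimate above but in ensuring that all of the constants arising from elliptic regularity, H\"older and Sobolev embedding are genuinely uniform in the approximation parameters $p$ and $\varepsilon$. This uniformity rests on the $W^{2,q}$ convergence of the immersions supplied by Proposition \ref{prop:penconv}, which via Rellich--Kondrachov upgrades to strong $C^{1,\beta}$ convergence and hence to $C^{0,\beta}$ control of the metric. Should one wish to avoid invoking Calder\'on--Zygmund directly, the same conclusion can be reached by iterating Bochner's formula: starting from the $L^2$ identity $\int(\Delta w)^2 = \int|\nabla^2 w|^2 + \int K|\nabla w|^2$ one obtains $\|\nabla^2 w\|_{L^2} \lesssim 1$ at once, and testing the pointwise Bochner identity $\tfrac{1}{2}\Delta|\nabla w|^2 = |\nabla^2 w|^2 + \langle\nabla w,\nabla\Delta w\rangle + K|\nabla w|^2$ against suitable powers of $|\nabla w|$, using Lemma \ref{lem:D2ineq} to absorb the resulting gradient terms, provides a bootstrap to every finite $q$.
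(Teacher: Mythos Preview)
Your approach is correct and reaches the same conclusion as the paper, but by a more direct route. The paper first passes through an explicit $W^{2,2}$ estimate: it improves $\nabla w$ from $L^2$ to $L^{2+\delta}$ by testing against $|\nabla w|^\delta \nabla w$ and integrating by parts (using Lemma~\ref{lem:D2ineq} to handle the Hessian term), then feeds this into Bochner's formula to control $\int|\nabla^2 w|^2$, and only afterwards invokes elliptic $L^q$ regularity for $\Delta w = 2\zeta$ with $\zeta = \lambda H - Qw - \sigma d D_\nu d + \sigma d^2 H \in L^q$ to bootstrap to $W^{2,q}$. You instead skip the Bochner stage entirely and apply Calder\'on--Zygmund from the outset, using the uniform $L^r$ bound on $w$ (the paper records this immediately after Lemma~\ref{lem:Lagrangebound} via the borderline embedding $W^{1,2}\hookrightarrow L^r$) to estimate $\|Qw\|_{L^q}$ directly. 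This is cleaner, and the paper's own final paragraph is essentially your argument; the intermediate Bochner step appears to be there for self-containedness rather than necessity, since Proposition~\ref{prop:regularity} already guarantees the qualitative $W^{2,2}$ regularity needed to justify the elliptic estimate. One small imprecision: you cite Corollary~\ref{cor:lapbeltconv} for the uniform $C^{0,\beta}$ control of the metric coefficients, but that corollary concerns the single-index sequence $(f_p)$; for the double-index family $(f_{p,\varepsilon})$ the uniform metric control should instead be drawn from the $W^{2,q}$ bound on the immersions in Lemma~\ref{lem:simplebounds} together with Rellich--Kondrachov.
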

\begin{proof}
We first obtain bounds in the special case $q = 2$. We cannot apply Bochner's formula immediately because we only have an $L^2$ bound on the gradient $\nabla w$ and an $L^q$ bound on the Gauss curvature $K$ (from the bounds on the immersions, cf. Lemma \ref{lem:simplebounds}), so we will first improve the integrability of $\nabla w$ and then use Bochner's formula. To this end let $\delta \in (0,2)$ be given and consider
\begin{align*}
\int_\Sigma \left\vert \nabla w \right\vert^{2 + \delta} \, \mathrm{d}\mu &= \int_\Sigma \left\langle \nabla w, \left\vert \nabla w \right\vert^{\delta} \nabla w \right\rangle \, \mathrm{d}\mu \\
&= - \int_\Sigma w \, \textnormal{div}\left( \left\vert \nabla w \right\vert^{\delta} \nabla w \right) \, \mathrm{d}\mu \\
&= - \int_\Sigma w \left\vert \nabla w \right\vert^{\delta} \Delta w + w \left\langle \nabla \left( \left\vert \nabla w \right\vert^{\delta} \right), \nabla w \right\rangle \, \mathrm{d}\mu \\
&= - \int_\Sigma w \left\vert \nabla w \right\vert^{\delta} \Delta w \, \mathrm{d}\mu - \int_\Sigma w \left\langle \nabla \left( \left( \left\vert \nabla w \right\vert^{2} \right)^{\delta/2} \right), \nabla w \right\rangle \, \mathrm{d}\mu \\
&\leq \int_\Sigma \vert w \vert \left\vert \nabla w \right\vert^{\delta} \vert \Delta w \vert \, \mathrm{d}\mu + \frac{\delta}{2} \int_\Sigma \vert w \vert \left\vert \nabla w \right\vert^{\delta - 1} \left\vert \nabla \left( \left\vert \nabla w \right\vert^{2} \right) \right\vert \, \mathrm{d}\mu \\
&\leq \int_\Sigma \vert w \vert \left\vert \nabla w \right\vert^{\delta} \vert \Delta w \vert \, \mathrm{d}\mu + \frac{C \delta}{2} \int_\Sigma \vert w \vert \left\vert \nabla w \right\vert^{\delta} \left\vert \nabla^2 w \right\vert \, \mathrm{d}\mu \\
\intertext{(using Lemma \ref{lem:D2ineq})}
&= I_1 + I_2.
\end{align*}
First we deal with the $I_1$ term: by applying Young's inequality with three terms to
\[
\vert w \vert \left\vert \nabla w \right\vert^{\delta} \left\vert \Delta w \right\vert = \left( \epsilon^{-\frac{3\delta + 2}{2(2 + \delta)}} \vert w \vert \right) \left( \epsilon^{\frac{\delta}{2 + \delta}} \left\vert \nabla w \right\vert^\delta \right) \left( \epsilon^{\frac{1}{2}} \left\vert \Delta w \right\vert \right)
\]
we can write
\[
I_1 \leq \frac{(2 - \delta)\epsilon^{-\frac{3\delta + 2}{2 - \delta}}}{2(2 + \delta)} \int_\Sigma \vert w \vert^{\frac{2(2 + \delta)}{2 - \delta}} \, \mathrm{d}\mu + \frac{\delta \epsilon}{2 + \delta} \int_\Sigma \left\vert \nabla w \right\vert^{2 + \delta} \, \mathrm{d}\mu + \frac{\epsilon}{2} \int_\Sigma \left\vert \Delta w \right\vert^2 \, \mathrm{d}\mu.
\]
The first and third terms are uniformly bounded in $p$ because of Lemmas \ref{lem:simplebounds}, \ref{lem:w2bounds}, and \ref{lem:Lagrangebound}, and the Euler-Lagrange equation \eqref{eq:epsEL}. We can ignore the second term if we pick $\epsilon$ small enough (by Proposition \ref{prop:regularity} the integral must be finite). \\
\noindent We now deal with the $I_2$ term. We use essentially the same trick with Young's inequality, but the calculations are slightly more involved due to the appearance of the Hessian $\nabla^2 w$ instead of the Laplacian $\Delta w$, which requires us to use Bochner's formula. Specifically, we apply Young's inequality to
\[
\vert w \vert \left\vert \nabla w \right\vert^{\delta} \left\vert \nabla^2 w \right\vert = \left( \epsilon^{-\frac{3\delta + 2}{2(2 + \delta)}} \vert w \vert \right) \left( \epsilon^{\frac{\delta}{2 + \delta}} \left\vert \nabla w \right\vert^\delta \right) \left( \epsilon^{\frac{1}{2}} \left\vert \nabla^2 w \right\vert \right)
\]
and find that
\begin{align*}
I_2 &\leq \frac{(2 - \delta)\epsilon^{-\frac{3\delta + 2}{2 - \delta}}}{2(2 + \delta)} \int_\Sigma \vert w \vert^{\frac{2(2 + \delta)}{2 - \delta}} \, \mathrm{d}\mu + \frac{\delta \epsilon}{2 + \delta} \int_\Sigma \left\vert \nabla w \right\vert^{2 + \delta} \, \mathrm{d}\mu + \frac{\epsilon}{2} \int_\Sigma \left\vert \nabla^2 w \right\vert^2 \, \mathrm{d}\mu.
\end{align*}
The first term is bounded uniformly in $p$ by our existing bounds on $w$ (Lemmas \ref{lem:w2bounds} and \ref{lem:Lagrangebound}), the second term can be ignored by picking $\epsilon$ sufficiently small, and the third term can be bounded using Bochner's formula, Young's inequality, and the uniform $L^q$ bounds for $K$ by
\begin{align*}
\frac{\epsilon}{2} \int_\Sigma \left\vert \nabla^2 w \right\vert^2 \, \mathrm{d}\mu &= \frac{\epsilon}{2} \int_\Sigma \left\vert \Delta w \right\vert^2 \, \mathrm{d}\mu - \frac{\epsilon}{2} \int_\Sigma K \left\vert \nabla w \right\vert^2 \, \mathrm{d}\mu \\
&\leq \frac{\epsilon}{2} \int_\Sigma \left\vert \Delta w \right\vert^2 \, \mathrm{d}\mu + \frac{\epsilon}{2 + \delta} \int_\Sigma \left\vert \nabla w \right\vert^{2 + \delta} \, \mathrm{d}\mu + \frac{\delta \epsilon}{2(2 + \delta)} \int_\Sigma \vert K \vert^{\frac{2+\delta}{\delta}} \, \mathrm{d}\mu.
\end{align*}
Again, the first and third integrals are uniformly bounded in $p$ by the Euler-Lagrange equation \eqref{eq:epsEL} and our existing bounds, and the second integral can be moved to the left-hand-side for small $\epsilon$. \\
\noindent This argument gives us a $W^{1,2 + \delta}$ bound on $w$ for chosen $\delta < 2$, and this is the last ingredient we need to go back to Bochner's formula which then gives the $W^{2,2}$ bound. \\
\noindent To improve the integrability of $\nabla^2 w$ to $L^q$ for given $q < \infty$, we employ standard elliptic regularity theory. The function $w$ solves the Dirichlet problem $L w = \zeta$ in $\Sigma$ for $L = \frac{1}{2}\Delta$ given as half the Laplace-Beltrami operator and
\[
\zeta = \lambda H - Q w - \sigma d D_\nu d + \sigma d^2 H \in L^{\frac{p}{2}}(\Sigma),
\]
where the $L^{\frac{p}{2}}$ regularity comes from our prior regularity results on all the terms on the right-hand-side. Here the $L^{\frac{p}{2}}$ regularity is the best we can attain, because of the presence of the Gauss curvature in the $Q$ term. We do not have a uniform $L^{\frac{p}{2}}$ bound on $\zeta$, but from the form of $\zeta$ we can see that so long as $q < \frac{p}{2}$, $\zeta$ is uniformly bounded in $L^q$ (hence by taking $p$ large we get a uniform $L^q$ bound for any fixed $1 \leq q < \infty$). By standard existence/uniqueness results for the Dirichlet problem \cite{gilbarg1977elliptic} and the compactness of $\Sigma$, after recalling Corollary \ref{cor:lapbeltconv}, we obtain the same $L^q$ uniform bounds on $\nabla^2 w$.
\end{proof}

\begin{proof}[Proof of Proposition \ref{prop:finalunifbounds}]
\noindent All of Proposition \ref{prop:finalunifbounds}, apart from the statements involving the scalars $h$, follows from Lemmas \ref{lem:simplebounds}, \ref{lem:Lagrangebound}, and \ref{lem:W2qbounds}. The remainder is a straightforward consequence of the definition of $h$ in Proposition \ref{prop:pELepseqs}.
\end{proof}

\subsection{Convergence of $w$ as $\varepsilon \rightarrow 0$ for fixed $p$} \label{subsec:epsconv}

\noindent In this subsection, we prove the following result:
\begin{prop}\label{prop:epszerobounds}
Let the given number $p$ be sufficiently large. There exists a minimiser $f$ of the functional $\mathcal{M}_p^\sigma[\,\bigcdot\, ; f_0]$ such that the corresponding function $w$ defined in Proposition \ref{prop:pELeqs} possesses the following regularity:
\[
w \in W^{2,\frac{p}{2}}(\Sigma).
\]
In addition, $f$, $w$, and all the terms involved in the Euler-Lagrange equation \eqref{eq:pEL} satisfy the fixed-$p$ uniform bounds of Proposition \ref{prop:finalunifbounds}. 
\end{prop}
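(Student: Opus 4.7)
The plan is to construct the minimiser $f$ as the weak $W^{2,p}$ limit of a sequence of $\mathcal{M}_p^{\sigma,\varepsilon_k}$-minimisers $f_{p,\varepsilon_k}$ with $\varepsilon_k \to 0$, exactly as in Lemma \ref{lem:epsconvimm}, and to exhibit its associated $w$ as the corresponding limit of $w_{p,\varepsilon_k}$ from Proposition \ref{prop:pELepseqs}. With $p$ fixed sufficiently large that Proposition \ref{prop:finalunifbounds} applies, the fixed-$p$, uniform-in-$\varepsilon$ bounds provide exactly the compactness needed: $(w_{p,\varepsilon_k})$ is bounded in $W^{2,p/2}(\Sigma)$, $(Q_{p,\varepsilon_k})$ in $L^{p/2}(\Sigma)$, and $(\lambda_{p,\varepsilon_k})$, $(h_{p,\varepsilon_k})$ are bounded in $\mathbb{R}$.

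After passing to a subsequence I obtain weak $W^{2,p/2}$ convergence $w_{p,\varepsilon_k} \rightharpoonup w^*$, weak $L^{p/2}$ convergence $Q_{p,\varepsilon_k} \rightharpoonup Q^*$, and numerical convergence $\lambda_{p,\varepsilon_k} \to \lambda^*$, $h_{p,\varepsilon_k} \to h^*$. Taking $p > 4$ (which is harmless since $p$ is taken large), the compact Sobolev embedding $W^{2,p/2}(\Sigma) \hookrightarrow C^0(\Sigma)$ upgrades convergence of $w_{p,\varepsilon_k}$ to uniform convergence. Combined with the strong $C^{1,\beta}$ convergence of $f_{p,\varepsilon_k}$ to $f_p$, the uniform convergence of $\xi_{p,\varepsilon_k}$ to $\xi_p$ and Corollary \ref{cor:lapbeltconv} for the Laplace--Beltrami coefficients, this puts all the ingredients of equation \eqref{eq:epsEL} into a form where the limit can be taken.

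The main obstacle is to identify $w^*$ with the function $w_p$ attached to $f_p$ by Proposition \ref{prop:pELeqs}, since the defining relation for $w_{p,\varepsilon}$ is nonlinear in $H_{p,\varepsilon}$, which converges only weakly in $L^p$ (Remark \ref{rem:meancurvconv}). My approach is pointwise: the map $G_\varepsilon \colon u \mapsto (u^2+\varepsilon)^{(p-2)/2} u$ is a strictly increasing continuous bijection of $\mathbb{R}$ to itself, with inverse depending continuously on $\varepsilon \in [0, \tilde\varepsilon]$ uniformly on compact sets. Setting $u_{p,\varepsilon} = \xi_{p,\varepsilon} H_{p,\varepsilon}$, the definition of $w_{p,\varepsilon}$ rearranges to $G_{\varepsilon}(u_{p,\varepsilon}) = \xi_{p,\varepsilon}^{-1} h_{p,\varepsilon}^{p-1} w_{p,\varepsilon}$. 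The uniform convergence of the right-hand side (with $\xi \geq 1$ keeping the factor $\xi_{p,\varepsilon}^{-1}$ bounded) forces $u_{p,\varepsilon_k}$ to converge pointwise almost everywhere to $u^* := G_0^{-1}(\xi_p^{-1} (h^*)^{p-1} w^*)$. Since $u_{p,\varepsilon_k} \rightharpoonup \xi_p H_p$ weakly in $L^p$, uniqueness of weak and almost-everywhere limits (via the equi-integrability supplied by the uniform $L^p$ bound) yields $u^* = \xi_p H_p$ a.e. The equality of functional values obtained in the proof of Lemma \ref{lem:epsconvimm}, combined with strong convergence of the penalisation term, forces $h^* = h_p$, and substituting back gives $w^* = w_p$.

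With $w^* = w_p \in W^{2,p/2}(\Sigma)$ identified, the fixed-$p$ uniform bounds transfer to $f_p$, $w_p$, $Q_p$, $\lambda_p$, $h_p$ by weak lower semicontinuity of the relevant norms. Passing to the limit $\varepsilon_k \to 0$ in the weak formulation of \eqref{eq:epsEL}, the Laplacian term converges by weak $W^{2,p/2}$ convergence of $w_{p,\varepsilon_k}$ combined with the $C^{0,\beta}$ convergence of the metric coefficients from Corollary \ref{cor:lapbeltconv}, and the remaining terms converge by the strong convergences established above. The limiting equation is precisely \eqref{eq:pEL} for $f_p$ and $w_p$, completing the proof.
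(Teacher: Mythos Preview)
Your core identification argument is essentially the same as the paper's: pass $(w_{p,\varepsilon_k})$ to a weak $W^{2,p/2}$ limit $w^*$, upgrade to uniform convergence, invert the strictly monotone map $G_\varepsilon$ to deduce that $\xi_{p,\varepsilon_k}H_{p,\varepsilon_k}$ converges pointwise, and match this against the weak $L^p$ limit $\xi_p H_p$ coming from Remark~\ref{rem:meancurvconv}. The paper isolates the convergence $h_{p,\varepsilon_k}\to h_p$ in a separate Lemma~\ref{lem:epsconvh}, but your argument via convergence of functional values and strong convergence of the penalisation term is equivalent and correct.

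One small point deserves care. Your claim that the fixed-$p$ bounds ``transfer by weak lower semicontinuity of the relevant norms'' is fine for $\|f_p\|_{W^{2,p}}$, $\|w_p\|_{W^{2,p/2}}$ and $h_p$, but for $Q_p$ and $\lambda_p$ it presupposes that the weak limits $Q^*$ and $\lambda^*$ coincide with $Q_p$ and $\lambda_p$. Since $K_{p,\varepsilon_k}$ is quadratic in the second derivatives of $f_{p,\varepsilon_k}$, weak $W^{2,p}$ convergence of the immersions does not by itself give $K_{p,\varepsilon_k}\rightharpoonup K_p$ in $L^{p/2}$, so the identification $Q^*=Q_p$ (and hence, via your limit of \eqref{eq:epsEL}, also $\lambda^*=\lambda_p$) is not justified by what you have written. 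The paper avoids this entirely: once $w_p\in W^{2,p/2}$ is established, Remark~\ref{rem:epszerobounds} simply re-runs all the estimates of Section~\ref{subsec:uniformbounds} directly on equation~\eqref{eq:pEL}, which yields the bounds on $\|Q_p\|_{L^{p/2}}$ and $|\lambda_p|$ without needing to identify $Q^*$ or $\lambda^*$. Your final paragraph on taking the limit in \eqref{eq:epsEL} is therefore unnecessary for the Proposition as stated, and the gap there is harmless once you adopt the paper's route for the bounds.
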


\noindent We have already shown in Lemma \ref{lem:epsconvimm} that for a sequence of immersions $(f_n)$ which minimise $\mathcal{M}_p^{\sigma,\varepsilon_n}[ \,\bigcdot\, ;f_0]$, where $(\varepsilon_n)$ is a sequence tending to zero, we have the convergence of $f_n$ to some minimiser $f$ of $\mathcal{M}_p^{\sigma}[ \,\bigcdot\, ;f_0]$. We will show now that the functions $w_n$ converge to the function $w$ corresponding to $f$ via Proposition \ref{prop:pELeqs}. \\

\noindent We first need to establish a convergence result we do not have already:
\begin{lem}\label{lem:epsconvh}
Let $(f_k)$ be a sequence of minimisers of $\mathcal{M}_p^{\sigma,\varepsilon_k}[ \,\bigcdot\, ;f_0]$ where $\varepsilon_k \rightarrow 0$, converging by Lemma \ref{lem:epsconvimm} to a limit $f$ minimising $\mathcal{M}_p^\sigma[\,\bigcdot\, ; f_0]$. Then the scalars $(h_k)$ defined in Proposition \ref{prop:pELepseqs} with respect to $(f_k)$ converge to the scalar $h$ defined in Proposition \ref{prop:pELeqs} with respect to $f$.
\end{lem}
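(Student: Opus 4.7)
The plan is to establish $h_k \to h$ by sandwiching with matching upper and lower bounds: $\limsup_k h_k \leq h \leq \liminf_k h_k$. The upper bound will exploit the minimising property of $f_k$ by testing against the $\mathcal{M}_p^\sigma$-minimiser $f$ itself, while the lower bound will rely on weak $L^p$ lower semicontinuity, combining $H_k \rightharpoonup H$ weakly in $L^p$ (Remark \ref{rem:meancurvconv}) with uniform convergence of the metric density and of the weight.

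For the upper bound, $f \in \mathcal{G} \subset W^{2,p}(\Sigma,\mathbb{R}^3)$ with $\mathcal{A}[f] = A$ is admissible for $\mathcal{M}_p^{\sigma,\varepsilon_k}[\,\bigcdot\,;f_0]$, so minimality of $f_k$ yields
\[
h_k + \mathcal{P}^\sigma[f_k;f_0] \leq A^{-1/p}\Bigl(\int_\Sigma\bigl((\xi H)^2+\varepsilon_k\bigr)^{p/2}\,\mathrm{d}\mu\Bigr)^{1/p} + \mathcal{P}^\sigma[f;f_0].
\]
As $\varepsilon_k \downarrow 0$ the integrand decreases pointwise to $|\xi H|^p$ and is uniformly dominated by $\bigl((\xi H)^2+\tilde{\varepsilon}\bigr)^{p/2} \in L^1(\mathrm{d}\mu)$, so dominated convergence drives the right-hand integral to $\|\xi H\|_{L^p}^p$. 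The strong $C^0$ convergence $f_k \to f$ (from Rellich-Kondrachov applied to the weak $W^{2,p}$ convergence established in Lemma \ref{lem:epsconvimm}) gives $\mathcal{P}^\sigma[f_k;f_0] \to \mathcal{P}^\sigma[f;f_0]$, and thus $\limsup_k h_k \leq h$.

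For the lower bound, the pointwise inequality $((\xi_k H_k)^2+\varepsilon_k)^{p/2} \geq |\xi_k H_k|^p$ reduces the task to showing $\int_\Sigma |\xi H|^p\,\mathrm{d}\mu \leq \liminf_k \int_\Sigma |\xi_k H_k|^p\,\mathrm{d}\mu_k$. I would pass to a fixed smooth reference measure $\mathrm{d}\lambda$ on $\Sigma$, writing $\mathrm{d}\mu_k = \rho_k\,\mathrm{d}\lambda$. Strong $C^{1,\beta}$ convergence of $f_k$ gives $\rho_k \to \rho$ uniformly, while the continuity of $\xi$ together with the uniform convergence of $f_k$ gives $\xi_k \to \xi$ uniformly. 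Setting $\phi_k = \xi_k^p \rho_k$, we have $\phi_k \to \phi$ uniformly with $\phi$ bounded below on the compact $\Sigma$. The weak $L^p(\mathrm{d}\lambda)$ convergence $H_k \rightharpoonup H$ from Remark \ref{rem:meancurvconv}, combined with the uniform convergence of $\phi_k^{1/p}$, yields $\phi_k^{1/p} H_k \rightharpoonup \phi^{1/p} H$ weakly in $L^p(\mathrm{d}\lambda)$, and the standard weak lower semicontinuity of the $L^p$ norm then supplies the desired inequality.

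The main obstacle is the passage to the limit in the weighted integral with varying surface measures, since we only have weak convergence of $H_k$ but no compactness strong enough to pass $|\cdot|^p$ through in general. Reducing to a fixed reference measure and exploiting the uniform convergence of the density $\rho_k$ (from $C^{1,\beta}$ convergence of the immersions) and of $\xi_k$ converts the issue into standard weak lower semicontinuity of a weighted $L^p$ norm with a uniformly convergent positive weight, after which Remark \ref{rem:meancurvconv} closes the argument.
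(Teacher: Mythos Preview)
Your proof is correct, and the upper bound $\limsup_k h_k \leq h$ is obtained exactly as in the paper: test the minimality of $f_k$ against $f$, pass to the limit in the integral by dominated (the paper says monotone) convergence, and handle the penalisation term via $C^0$ convergence.

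For the lower bound $h \leq \liminf_k h_k$, however, you and the paper take genuinely different routes. You drop the $\varepsilon_k$ term by the pointwise inequality $((\xi_k H_k)^2+\varepsilon_k)^{p/2} \geq |\xi_k H_k|^p$ and then prove weak lower semicontinuity of the weighted $L^p$ norm directly: passing to a fixed reference measure, using uniform convergence of the densities $\rho_k$ and weights $\xi_k$, and invoking Remark~\ref{rem:meancurvconv} to get $\phi_k^{1/p}H_k \rightharpoonup \phi^{1/p}H$ weakly in $L^p$. The paper instead exploits the minimality of the \emph{limit} $f$ for $\mathcal{M}_p^\sigma$:
\[
h + \mathcal{P}^\sigma[f;f_0] = \mathcal{M}_p^\sigma[f;f_0] \leq \mathcal{M}_p^\sigma[f_k;f_0] \leq \mathcal{M}_p^{\sigma,\varepsilon_k}[f_k;f_0] = h_k + \mathcal{P}^\sigma[f_k;f_0],
\]
and then cancels the penalisation terms in the $\liminf$. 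This is considerably shorter and avoids any direct appeal to weak convergence of $H_k$; the price is that it uses the additional information (available here from Lemma~\ref{lem:epsconvimm}) that $f$ is not merely the limit but actually a minimiser of $\mathcal{M}_p^\sigma$. Your argument, by contrast, would still work for any weak $W^{2,p}$ limit $f$ regardless of minimality, which is a mild gain in generality but not needed in this paper.
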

\begin{proof}
On the one hand, for each $k \in \mathbb{N}$ we have by the definitions of $h$ and $h_k$ (cf. Propositions \ref{prop:pELeqs} and \ref{prop:pELepseqs}) and the minimality of $f$ that
\begin{align*}
h + \mathcal{P}^\sigma[f;f_0] &= \mathcal{M}_p^\sigma [f ; f_0] \\
&\leq \mathcal{M}_p^\sigma [f_k ; f_0] \\
&\leq \mathcal{M}_p^{\sigma,\varepsilon_k} [f_k ; f_0] \\
&= h_k + \mathcal{P}^\sigma[f_k ; f_0],
\end{align*}
As in the proof of Lemma \ref{lem:epsminexist}, the weak $W^{2,p}$ convergence $f_k \rightharpoonup f$ implies the convergence of the penalisation functionals $\mathcal{P}^\sigma[f_k ; f_0] \rightarrow \mathcal{P}^\sigma[f;f_0]$. Therefore, we can take the $\liminf$ on both sides of the above chain of inequalities and deduce that $h \leq \liminf_k h_k$. On the other hand, because $f_k$ is minimal for $\mathcal{M}_p^{\sigma,\varepsilon_k}[ \,\bigcdot\, ;f_0]$ we may write that
\[
h_k \leq \left( \int_\Sigma ( \vert \xi H \vert^2 + \varepsilon_k )^{\frac{p}{2}} \, \mathrm{d}\mu \right)^{\frac{1}{p}}
\]
and so by the monotone convergence theorem and the definition of $h$ we find that $\limsup_k h_k \leq h$.
\end{proof}

\begin{proof}[Proof of Proposition \ref{prop:epszerobounds}]
\noindent As before, let $(f_k)$ be a sequence of minimisers of the functionals $\mathcal{M}_p^{\sigma,\varepsilon_k}[ \,\bigcdot\, ;f_0]$ where $\varepsilon_k \rightarrow 0$. By Proposition \ref{prop:finalunifbounds} we know that the sequence $(w_k)$ corresponding to the sequence $(f_k)$ via the Euler-Lagrange equations \eqref{eq:epsEL} is bounded in $W^{2,p/2}$, thus it admits a subsequence (not explicitly labelled) which converges weakly in the $W^{2,p/2}$ sense. That is, labelling the limit as $\tilde{w}$, we have that
\begin{equation}\label{eq:wepsconv}
h_k^{1-p} ((\xi_k H_k)^2 + \varepsilon_k)^{\frac{p-2}{2}} \xi_k^2 H_k = w_k \rightharpoonup \tilde{w} \in W^{2,\frac{p}{2}}(\Sigma)
\end{equation}
with $h_k$, $\xi_k$, $\varepsilon_k$ and $H_k$ as in Proposition \ref{prop:pELepseqs}. We will show that $\tilde{w}$ is in fact given by the function $w$ as in the statement of Proposition \ref{prop:epszerobounds}. We have the following modes of convergence:
\begin{equation*}
\begin{dcases}
\textnormal{$w_k \rightarrow \tilde{w}$ uniformly by the weak $W^{2,p/2}$ convergence $w_k \rightharpoonup \tilde{w}$,} \\
\textnormal{$h_k \rightarrow h$ in $\mathbb{R}$ by Lemma \ref{lem:epsconvh},} \\
\textnormal{$\xi_k \rightarrow \xi$ uniformly by the uniform convergence $f_k \rightarrow f$,} \\
\textnormal{$\varepsilon_k \rightarrow 0$ in $\mathbb{R}$ by assumption,}
\end{dcases}
\end{equation*}
and so from equation \eqref{eq:wepsconv} and the fact that the function
\[
t \mapsto h_k^{1-p} ((\xi_k t)^2 + \varepsilon_k)^{\frac{p-2}{2}} \xi_k^2 t
\]
is invertible, we see that we must have the uniform convergence of $H_k$ to some limit. However, we already know that $H_k \rightharpoonup H$ weakly in $L^p$ where $H$ is the mean curvature of $f$ because of Remark \ref{rem:meancurvconv}. By the coincidence of the weak and uniform limits we see that $H_k \rightarrow H$ uniformly. Going back to \eqref{eq:wepsconv}, it follows that
\[
\tilde{w} = h^{1-p} ((\xi H)^2 + \varepsilon)^{\frac{p-2}{2}} \xi^2 H = w
\]
with $w$ as in Proposition \ref{prop:pELeqs}, i.e. we have that $w \in W^{2,p/2}(\Sigma)$.
\end{proof}

\begin{rem}\label{rem:epszerobounds}
Because we now know that $w \in W^{2,p/2}(\Sigma)$, it is possible to apply all the arguments used in Section \ref{subsec:uniformbounds} to the Euler-Lagrange equation \eqref{eq:pEL} as well, so all the uniform bounds of Proposition \ref{prop:finalunifbounds} also apply in the limiting $\varepsilon = 0$ case.
\end{rem}

\subsection{Convergence of Euler-Lagrange as $p \rightarrow \infty$} \label{subsec:pconv}

\noindent In the previous subsection we saw that for any fixed $p$ there exists a minimiser $f$ of the functional $\mathcal{M}_p^{\sigma}[ \,\bigcdot\, ;f_0]$ whose corresponding function $w$ as in Proposition \ref{prop:pELeqs} is $W^{2,p/2}$ regular, and that the terms in the Euler-Lagrange equation \eqref{eq:pEL} obey the uniform bounds of Proposition \ref{prop:finalunifbounds} (cf. Remark \ref{rem:epszerobounds}). In this section we will begin the proof of Theorem \ref{thm:mainthm}: we will obtain $\infty$-Willmore spheres and their limiting equation system \eqref{eq:main1}--\eqref{eq:main2} by taking the limit as $p \rightarrow \infty$ of both a sequence of minimisers of $\mathcal{M}_p^{\sigma}[ \,\bigcdot\, ;f_0]$ as constructed in Section \ref{subsec:epsconv} and of their Euler-Lagrange equations \eqref{eq:pEL}. We must therefore adjust our notation to keep track of the index $p$ as it varies, and so we write $f_p$ for the minimiser of $\mathcal{M}_p^{\sigma}[ \,\bigcdot\, ;f_0]$ and additionally include the subscript $p$ in all the terms in the Euler-Lagrange equations \eqref{eq:pEL}. \\

\noindent We require a corollary of Proposition \ref{prop:penconv}: namely, that we gain control over the convergence of the sequence $(h_p)$ defined in Proposition \ref{prop:pELeqs}:
\begin{cor}\label{cor:hpconv}
In the context of Proposition \ref{prop:penconv}, the sequence $(h_p)$ converges to the limit given by
\[
h = \normm{\xi_0 H_0}_{L^\infty}.
\]
\end{cor}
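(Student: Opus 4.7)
The plan is to sandwich $h_p$ between quantities that both tend to $\normm{\xi_0 H_0}_{L^\infty(\mu_0)}$. First, for the upper bound, I would exploit the minimality of $f_p$ for $\mathcal{M}_p^{\sigma}[\,\bigcdot\,;f_0]$ by testing against the competitor $f_0$ itself. By \eqref{eq:distineqs}, the regularised distance satisfies $d(f_0(x)) = 0$ for all $x \in \Sigma$, so $\mathcal{P}^\sigma[f_0;f_0] = 0$; combining with $\mathcal{P}^\sigma[f_p;f_0] \geq 0$ yields
\[
h_p \;\leq\; h_p + \mathcal{P}^\sigma[f_p;f_0] \;=\; \mathcal{M}_p^\sigma[f_p;f_0] \;\leq\; \mathcal{M}_p^\sigma[f_0;f_0] \;=\; A^{-1/p}\normm{\xi_0 H_0}_{L^p(\mu_0)}.
\]
Since $\xi_0 H_0 \in L^\infty(\mu_0)$ and $\mu_0$ is finite with total mass $A$, the normalised $L^p$ norm on the right converges to $\normm{\xi_0 H_0}_{L^\infty(\mu_0)}$ by the standard property of $L^p$ norms on a finite measure space, giving $\limsup_p h_p \leq \normm{\xi_0 H_0}_{L^\infty(\mu_0)}$.

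For the matching lower bound I would fix an arbitrary $q < \infty$ and apply H\"older's inequality with exponents $p/q$ and $p/(p-q)$ to obtain, for all $p \geq q$,
\[
\int_\Sigma |\xi_p H_p|^q\, \mathrm{d}\mu_p \;\leq\; \Bigl(\int_\Sigma |\xi_p H_p|^p\, \mathrm{d}\mu_p\Bigr)^{q/p}\! A^{1 - q/p} \;=\; A\,h_p^{\,q},
\]
i.e.\ $A^{-1/q}\normm{\xi_p H_p}_{L^q(\mu_p)} \leq h_p$. The core step is then the weak lower semicontinuity
\[
\normm{\xi_0 H_0}_{L^q(\mu_0)} \;\leq\; \liminf_p \normm{\xi_p H_p}_{L^q(\mu_p)}.
\]
For this I would use that Proposition \ref{prop:penconv} gives strong $C^{1,\beta}$ convergence $f_p \to f_0$, hence uniform convergence of the local densities $\sqrt{\det g_p} \to \sqrt{\det g_0}$ (cf.\ Corollary \ref{cor:lapbeltconv}) and of $\xi_p \to \xi_0$, while Remark \ref{rem:meancurvconv} yields $H_p \rightharpoonup H_0$ weakly in $L^q$ and therefore $\xi_p H_p \rightharpoonup \xi_0 H_0$ weakly in $L^q$. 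Working in local charts via a partition of unity, I would split
\[
\int |\xi_p H_p|^q \sqrt{\det g_p}\, \mathrm{d}x \;=\; \int |\xi_p H_p|^q \sqrt{\det g_0}\, \mathrm{d}x \;+\; \int |\xi_p H_p|^q \bigl(\sqrt{\det g_p}-\sqrt{\det g_0}\bigr)\, \mathrm{d}x,
\]
where the second integral vanishes in the limit thanks to uniform density convergence together with the uniform $L^q$ bound on $\xi_p H_p$ from Proposition \ref{prop:finalunifbounds}, while the first is weakly lower semicontinuous in $\xi_p H_p$ because its integrand is convex in $\xi_p H_p$ against a fixed positive weight. Sending $q \to \infty$ then gives $\normm{\xi_0 H_0}_{L^\infty(\mu_0)} \leq \liminf_p h_p$.

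The main obstacle I anticipate is the bookkeeping around the varying measures $\mu_p$, which at first glance obstructs a direct application of weak lower semicontinuity; the resolution is precisely the uniform convergence of the metric densities just noted, which reduces the question to the classical lower semicontinuity against a fixed positive weight on a fixed domain. Combining the two bounds then yields $h_p \to \normm{\xi_0 H_0}_{L^\infty(\mu_0)} = h$, as claimed.
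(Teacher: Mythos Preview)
Your proposal is correct and follows essentially the same two-sided sandwich argument as the paper: minimality of $f_p$ against the competitor $f_0$ for the upper bound, and H\"older plus weak lower semicontinuity of the $L^q$ norm followed by $q\to\infty$ for the lower bound. If anything, your treatment is more careful than the paper's, since you explicitly address the varying-measure issue in the lower semicontinuity step (via uniform convergence of $\sqrt{\det g_p}$ and the uniform $L^q$ bound on $\xi_p H_p$), whereas the paper simply invokes ``weak lower semicontinuity of the $L^q$ norm'' without comment.
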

\begin{proof}
On the one hand,
\begin{align*}
\normm{\xi_0 H_0}_{L^\infty} &= \lim_q \normm{\xi_0 H_0}_{L^q} \\
&\leq \lim_q \liminf_p \normm{\xi_p H_p}_{L^p} A^{\frac{1}{q} - \frac{1}{p}} \\
&= \lim_q \liminf_p h_p A^{\frac{1}{q}} \\
&= \liminf_p h_p
\end{align*}
using the weak lower semicontinuity of the $L^q$ norm, H\"{o}lder's inequality, and the prescribed surface area. \\
On the other hand,
\begin{align*}
\limsup_p h_p &= \limsup_p A^{-\frac{1}{p}} \normm{\xi_p H_p}_{L^p} \\
&\leq \limsup_p \mathcal{M}_p^\sigma [ f_p ; f_0] \\
&\leq \limsup_p \mathcal{M}_p^\sigma [ f_0 ; f_0] \\
&\leq \limsup_p \mathcal{M}_\infty^\sigma [ f_0 ; f_0] \\
&= \mathcal{M}_\infty^\sigma [ f_0 ; f_0] \\
&= \normm{\xi_0 H_0}_{L^\infty}
\end{align*}
using the characterisation of $(f_p)$ as a sequence of minimisers of $\mathcal{M}_p^\sigma [ \,\bigcdot\, ; f_0]$, the definition of $\mathcal{M}_p^\sigma [ \,\bigcdot\, ; f_0]$, and H\"{o}lder's inequality.
\end{proof}

\noindent We now show that the limiting system of equations \eqref{eq:main1}--\eqref{eq:main2} is satisfied on $f_0$. Consider the Euler-Lagrange equations \eqref{eq:pEL},
\[
\frac{1}{2} \Delta_p w_p + Q_p w_p = \lambda_p H_p - \sigma d_p D_{\nu_p} d_p + \sigma d_p^2 H_p,
\]
repeated here for convenience. By Proposition \ref{prop:finalunifbounds}, Remark \ref{rem:meancurvconv} and Corollary \ref{cor:distconv}, we have the existence of functions $w$ and $Q$ such that the following modes of convergence hold (modulo taking subsequences): \\
\begin{equation*}
\begin{dcases}
w_p \rightharpoonup w \text{ in $W^{2,q}$ for all $q < \infty$}, \\
Q_p \rightharpoonup Q \text{ in $L^{q}$ for all $q < \infty$}, \\
\lambda_p \rightarrow \lambda \text{ in $\mathbb{R}$}, \\
H_p \rightharpoonup H \text{ in $L^{q}$ for all $q < \infty$}, \\
d_p \rightarrow 0 \text{ in $C^0$}, \\
\end{dcases}
\end{equation*}
Also, by Corollary \ref{cor:lapbeltconv} the coefficients of $\Delta_p$ converge in $C^0$ to the coefficients of the Laplace-Beltrami operator on $f(\Sigma)$, and by construction of the regularised distance $d$ the bound $\normm{D_{\nu_p} d_p}_{L^\infty} \leq C$ holds. Taking the limit of equation \eqref{eq:pEL}, the first limiting equation \eqref{eq:main1} follows. \\
The second limiting equation \eqref{eq:main2} is trivially satisfied at zeroes of $w$, and to see that it holds where $w \neq 0$ we consider the equation
\begin{equation}\label{eq:Hpconv}
H_p = \vert w_p \vert^{\frac{1}{p-1}} h_p \frac{w_p}{\vert w_p \vert} \xi^{-\frac{p}{p-1}},
\end{equation}
which follows from the definition of $w_p$. Away from zeroes of $w$, we have the following modes of convergence due to Corollary \ref{cor:hpconv} and the uniform convergence of $w_p \rightarrow w$ and $f_p \rightarrow f$:
\begin{equation*}
\begin{dcases}
\vert w_p \vert^{\frac{1}{p-1}} \rightarrow 1 \text{ locally uniformly}, \\
h_p \rightarrow h \text{ in $\mathbb{R}$}, \\
\frac{w_p}{\vert w_p \vert} \rightarrow \frac{w}{\vert w \vert} \text{ locally uniformly} \\
\xi_p \rightarrow \xi \text{ locally uniformly},
\end{dcases}
\end{equation*}
which shows that the right-hand-side of equation \eqref{eq:Hpconv} converges locally uniformly away from zeroes of $w$. Hence the left-hand-side of \eqref{eq:Hpconv} converges in the same way, and because we already know that $H_p \rightharpoonup H$ in $L^q$ for finite $q$ then the limits must coincide and we in fact have the local uniform convergence of $H_p$ to $H$ away from zeroes of $w$, so equation \eqref{eq:main2} holds almost everywhere on $\Sigma$. \\

\noindent We remark that the limiting $Q$ in \eqref{eq:main1} is not necessarily expressible in the same form as the $Q_p$ terms, because the square of the mean curvature $H_p^2$ and the Gauss curvature $K_p$ involve terms which are quadratic in the second derivatives of $f$, and in general the product of weakly convergent sequences does not converge weakly to the product of the respective limits.


\newpage
\section{Behaviour and Existence of $\infty$-Willmore spheres}\label{sec:limitsystem}

\noindent Having established that the limiting equations \eqref{eq:main1}--\eqref{eq:main2} are satisfied on any given $\infty$-Willmore sphere $f_0$, which we now relabel as $f$ to fit into the notation of Theorem \ref{thm:mainthm}, we can now prove the rest of Theorem \ref{thm:mainthm}: that the function $w$ does not vanish everywhere, and that it governs the behaviour of the mean curvature of $f$.
\begin{prop}\label{prop:nonvanishing}
In the context of Theorem \ref{thm:mainthm}, the function $w$ is not identically zero.
\end{prop}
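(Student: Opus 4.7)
The plan is to exploit the explicit form $w_p = h_p^{1-p}\xi_p^p|H_p|^{p-2}H_p$ from Proposition \ref{prop:pELeqs} to obtain a quantitative lower bound on the pairing of $w$ against $H$ in the limit. Multiplying the definition of $w_p$ by $H_p$ gives the pointwise identity $w_p H_p = h_p^{1-p}|\xi_p H_p|^p$, and integrating over $\Sigma$ against $\mu_p$ together with $\|\xi_p H_p\|_{L^p}^p = A h_p^p$ yields the clean identity
\[
\int_\Sigma w_p H_p \, \mathrm{d}\mu_p \;=\; h_p^{1-p}\|\xi_p H_p\|_{L^p}^p \;=\; A h_p.
\]

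Next, I would pass to the limit $p \to \infty$ on both sides. The right-hand side tends to $Ah$ by Corollary \ref{cor:hpconv}. For the left-hand side, the weak $W^{2,q}$ convergence $w_p \rightharpoonup w$ for every finite $q$ combined with Sobolev embedding yields strong $C^0$ convergence $w_p \to w$; coupled with the weak $L^q$ convergence $H_p \rightharpoonup H$ from Remark \ref{rem:meancurvconv} and the strong $C^{0,\beta}$ convergence of the metric coefficients (Corollary \ref{cor:lapbeltconv}), one can write the integrals in a finite atlas of graph charts with a partition of unity and see that each local contribution is a product of a uniformly convergent factor $w_p\sqrt{\det g_p}$ with a weakly $L^q$-convergent factor $H_p$, giving
\[
\int_\Sigma w_p H_p \, \mathrm{d}\mu_p \;\longrightarrow\; \int_\Sigma w H \, \mathrm{d}\mu.
\]
Therefore $\int_\Sigma w H \, \mathrm{d}\mu = Ah$.

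To conclude I would argue $h > 0$: since $\Sigma$ is closed there are no compact minimal immersions into $\mathbb{R}^3$ (as already invoked in Lemma \ref{lem:testfnexist}), so $H \not\equiv 0$, and because $\xi \geq 1$ by \eqref{cond:alph2} we get $h = \|\xi H\|_{L^\infty} > 0$. Thus $\int_\Sigma w H \, \mathrm{d}\mu = Ah > 0$, which rules out $w \equiv 0$.

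The only delicate point is the limit passage in $\int_\Sigma w_p H_p \, \mathrm{d}\mu_p$, because the surface measure $\mu_p$ itself varies with $p$; this is handled by the interplay between the strong $C^0$ convergence of $w_p$ and $\sqrt{\det g_p}$ on one hand and the weak $L^q$ convergence of $H_p$ on the other. Every remaining step is either algebraic manipulation of the definitions or a direct appeal to the convergence results already established in Section \ref{sec:compactness}, so no genuinely new estimate is required.
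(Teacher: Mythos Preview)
Your argument is correct. You derive the identity $\int_\Sigma w_p H_p\,\mathrm{d}\mu_p = A h_p$ from the definition of $w_p$, pass to the limit using the uniform convergence $w_p\to w$, the weak $L^q$ convergence $H_p\rightharpoonup H$, and the $C^{0,\beta}$ convergence of the metric coefficients, and conclude $\int_\Sigma wH\,\mathrm{d}\mu = Ah>0$. All the convergence facts you invoke are indeed available from Section~\ref{sec:compactness}, and the limit passage is justified exactly as you describe.

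The paper's proof is shorter and takes a slightly different route: it observes directly from the definition of $w_p$ that $\|w_p\|_{L^{p'}}$ is bounded below away from zero (essentially $\|w_p\|_{L^{p'}}^{p'}\geq A$ using $\xi\geq 1$), and then the uniform convergence $w_p\to w$ forces $w\not\equiv 0$. This avoids any appeal to weak convergence of $H_p$ or to the varying surface measures, so it is marginally more self-contained. Your approach, on the other hand, yields the extra quantitative information $\int_\Sigma wH\,\mathrm{d}\mu = Ah$, which is a pleasant byproduct not recorded in the paper. Both arguments rest on the same underlying observation that the algebraic form of $w_p$ encodes a nontrivial integral constraint.
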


\begin{proof}[Proof of Proposition \ref{prop:nonvanishing}]
This statement is an immediate consequence of the lower bound away from zero on $\normm{w_p}_{L^{p'}(\Sigma)}$ from the definition of $w_p$ and the uniform convergence $w_p \rightarrow w$.
\end{proof}

\noindent Now that we know that $w \not\equiv 0$, we can analyse the system \eqref{eq:main1}--\eqref{eq:main2} to prove the remainder of Theorem \ref{thm:mainthm}.

\begin{prop}\label{prop:threevalues}
In the context of Theorem \ref{thm:mainthm}, the function $\xi(f(x)) H(x)$ takes on only three values up to null sets: $\xi H = h \, \sgn(w)$ where $w \neq 0$ and $\xi H = 0$ where $w = 0$.
\end{prop}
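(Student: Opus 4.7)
Equation \eqref{eq:main2} gives immediately that $\xi H = h\,\sgn(w)$ wherever $w \neq 0$, so the substantive content of the proposition concerns the nodal set $N := \{x \in \Sigma : w(x) = 0\}$. Since $\xi \geq 1$, on $N$ the assertion $\xi H = 0$ reduces to proving $H = 0$ almost everywhere on $N$.

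The main idea is to collapse equation \eqref{eq:main1} on $N$ via a Stampacchia-type chain argument. Theorem \ref{thm:mainthm} supplies $w \in W^{2,q}(\Sigma)$ for every $q < \infty$. Working in local coordinates, the classical Stampacchia lemma applied to $w$ gives $\partial_i w = 0$ almost everywhere on $\{w = 0\}$ for each index $i$; applying the lemma componentwise to each $\partial_i w \in W^{1,q}$ then gives $\partial_j \partial_i w = 0$ almost everywhere on $\{\partial_i w = 0\}$, and hence on $\{w = 0\}$ up to null sets. Passing back to the Laplace--Beltrami operator (whose coefficients are controlled by the metric of $f(\Sigma)$), we conclude that $\Delta w = 0$ almost everywhere on $N$. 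Since the term $Qw$ vanishes trivially on $N$, equation \eqref{eq:main1} then collapses on $N$ to the pointwise identity $\lambda H = 0$.

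I then split on the Lagrange multiplier $\lambda$. If $\lambda \neq 0$, the identity just derived immediately yields $H = 0$ almost everywhere on $N$, as desired. If instead $\lambda = 0$, equation \eqref{eq:main1} becomes the homogeneous linear elliptic equation $\tfrac{1}{2}\Delta w + Qw = 0$ with potential $Q \in \bigcap_{q<\infty} L^q(\Sigma)$; combining this with the nontriviality $w \not\equiv 0$ furnished by Proposition \ref{prop:nonvanishing} and a strong unique continuation theorem for second-order elliptic operators with such potentials (applied in the graph charts supplied by Theorem \ref{thm:KLL}), the nodal set $N$ must have two-dimensional Lebesgue measure zero, and the required assertion on $N$ holds vacuously.

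I expect the most delicate step to be the invocation of strong unique continuation in the case $\lambda = 0$: one must verify that a suitable version of the theorem (for instance the two-dimensional versions of Hartman--Wintner or the general Jerison--Kenig framework) applies with our specific regularity on $Q$ and on the metric of $f(\Sigma)$ coming from the $W^{2,q}$-convergence of the $f_p$. The Stampacchia iteration and the case analysis on $\lambda$ are by contrast routine once the $W^{2,q}$-regularity of $w$ is in hand.
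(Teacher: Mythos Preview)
Your proposal is correct, and for the case $\lambda \neq 0$ it is genuinely different from, and cleaner than, the paper's argument. The paper reasons contrapositively: at a point with $w=0$ and $H \neq 0$ equation \eqref{eq:main1} forces $\Delta w \neq 0$, and it then runs a geometric argument (foliating a coordinate chart by lines, analysing the second $t$-derivative of $w$ along each line, and invoking the Lebesgue differentiation theorem) to conclude that such points form a null set. Your Stampacchia iteration bypasses all of this: from $w \in W^{2,q}$ one gets $\partial_i w = 0$ a.e.\ on $\{w=0\}$, then $\partial_j\partial_i w = 0$ a.e.\ on $\{\partial_i w = 0\}\supseteq \{w=0\}$ (up to null sets), hence $\Delta w = 0$ a.e.\ on $\{w=0\}$ directly, and \eqref{eq:main1} collapses to $\lambda H = 0$ there. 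This is the standard ``derivatives vanish on level sets'' argument iterated once, and it requires no foliation or density-point reasoning. For $\lambda = 0$ the two arguments are essentially the same: the paper cites Alessandrini (finite vanishing order) together with Hardt--Simon to get $|\{w=0\}|=0$, which is exactly the strong unique continuation route you outline; your caveat about checking that the available regularity of $Q$ and of the metric suffices for the cited theorem is well placed, but the two-dimensional results you mention do cover potentials in $\bigcap_{q<\infty} L^q$ and Lipschitz leading coefficients.
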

\begin{proof}
At points $x$ where $w(x) \neq 0$, the result follows immediately from equation \eqref{eq:main2}. Elsewhere, we consider two different cases depending on the value of $\lambda$. First, if $\lambda \neq 0$, then at any given $x \in \Sigma$ where $w(x) = 0$ and $H(x) \neq 0$ (recall that $\xi$ cannot be zero as we have assumed $\xi \geq 1$), equation \eqref{eq:main1} implies that $\Delta w \neq 0$. Because $w$ is in $W^{2,q}(\Sigma)$ with $q > 2$, its first derivatives are differentiable almost everywhere (this result can be found, for example, in \cite[Theorem 6.5]{evans2018measure}). Given any subset $\Omega \subset \Sigma$ diffeomorphic to the unit ball $B_1(0)$, take local co-ordinates $(u,v)$ and consider the foliation of $\Omega$ by a family of lines $l_{x,\theta} (t) = (u_1 + t\cos(\theta), v_1 + t\sin(\theta))$ centred at $x = f(u_1,v_1)$ where the familial parameter $\theta$ ranges over $[0,\pi)$. We claim that for almost every $x \in \Omega$, the function $w_{x,\theta}(t) := w(l_{x,\theta}(t))$ is twice differentiable at $0$ for almost every $\theta \in [0,\pi)$. Indeed, assume that there exists a non-null subset $\Omega' \subset \Omega$ such that for every $x \in \Omega'$, there is a non-null subset $A_x \subset [0,\pi)$ with the property that $w_{x,\theta}$ is not twice differentiable at $0$ for every $\theta \in A_x$. Introducing the integral
\begin{align*}
I &= \int_0^\pi \left\vert x \in \Omega \,:\, w_{x,\theta}''(0) \textnormal{ exists} \right\vert \, \mathrm{d}\theta, \\ 
\intertext{it follows by Fubini's theorem and standard computations that}
I &= \int_\Omega \left\vert \theta \in [0,\pi) \,:\, w_{x,\theta}''(0) \textnormal{ exists} \right\vert \, \mathrm{d}x \\
&= \int_{\Omega'} \left\vert \theta \in [0,\pi) \,:\, w_{x,\theta}''(0) \textnormal{ exists} \right\vert \, \mathrm{d}x + \int_{\Omega \backslash \Omega'} \left\vert \theta \in [0,\pi) \,:\, w_{x,\theta}''(0) \textnormal{ exists} \right\vert \, \mathrm{d}x  \\
&\leq \int_{\Omega'} \left\vert [0,\pi) \backslash A_x \right\vert \, \mathrm{d}x + \int_{\Omega \backslash \Omega'} \pi \, \mathrm{d}x  \\
&< \pi \left( \vert \Omega' \vert + \vert \Omega \backslash \Omega' \vert \right),
\end{align*}
where the final inequality comes from the assumption on $A_x$, and hence we have that $I < \pi \vert \Omega \vert$. However, a standard result on Sobolev spaces (see e.g. \cite[Section 1.1.3]{Mazja1985}) implies that for each fixed $\theta \in [0,\pi)$, $w_{x,\theta}$ is twice differentiable at $0$ for almost every $x \in \Omega$. We thus have the equality $\vert x \in \Omega \,:\, w_{x,\theta}''(0) \textnormal{ exists} \vert \,= \vert \Omega \vert$ for each fixed $\theta$, so integrating over $\theta$ gives 
\[
I = \int_0^\pi \left\vert x \in \Omega \,:\, w_{x,\theta}''(0) \textnormal{ exists} \right\vert \, \mathrm{d}\theta = \pi \vert \Omega \vert,
\]
a contradiction. \\
Now, take normal co-ordinates $(u,v)$ about some fixed point $x = \omega^{-1}(0,0)$, where $\omega$ is the co-ordinate chart map, so the Laplace-Beltrami operator at $x$ is given by the standard Laplacian operator $\partial^2_{uu} + \partial^2_{vv}$. The second derivative of $w(l_\theta(t))$ evaluated at $t=0$ is given by
\[
\partial^2_{tt} w(l_\theta(t)) \Bigr|_{t=0} = \cos^2(\theta) \partial^2_{uu}w(x) + \sin^2(\theta) \partial^2_{vv}w(x) + \sin(\theta)\cos(\theta) (\partial^2_{uv}w(x) + \partial^2_{vu}w(x)),
\]
and because $\partial^2_{uu} w(x) + \partial^2_{vv} w(x) \neq 0$ we have that $\partial^2_{tt} w(l_\theta(t)) \bigr|_{t=0}$ viewed as a function of $\theta$ is real analytic and not identically zero, so the set of values of $\theta$ such that $\partial^2_{tt} w(l_\theta(t)) \bigr|_{t=0} = 0$ must be null in $[0,\pi)$ (with respect to the one-dimensional Lebesgue measure). Thus, along almost every line $l_\theta = l_{x,\theta}$ of our foliation, $x$ must be an isolated zero of the restriction of $w$ to $l_\theta$: that is, for almost every $\theta \in [0,\pi)$ there exists $\varepsilon > 0$ such that $w(l_\theta(t)) \neq 0$ for $t \in (-\varepsilon,\varepsilon)\backslash \{ 0 \}$. This implies the convergence
\begin{equation}\label{eq:lebdiff}
\frac{\mu \left( B_\varepsilon(x) \cap \{ w = 0 \} \right)}{\mu \left( B_\varepsilon(x) \right)} \rightarrow 0 \textnormal{ as } \varepsilon \rightarrow 0.
\end{equation}
Indeed, assume for a contradiction that there exists some $c \in (0,1)$ and a sequence $\varepsilon_n \rightarrow 0$ such that
\begin{equation}\label{eq:lebdiff2}
\frac{\mu \left( B_{\varepsilon_n}(x) \cap \{ w = 0 \} \right)}{\mu \left( B_{\varepsilon_n}(x) \right)} \geq c.
\end{equation}
For each $n \in \mathbb{N}$, consider the set
\[
A_n = \{ \theta \in [0,\pi) \,:\, w(l_\theta(t)) \textnormal{ has a zero in }(-\varepsilon_n, \varepsilon_n)\backslash \{ 0 \} \}.
\]
By equation \eqref{eq:lebdiff2} the Lebesgue measure of $A_n$ is at least $c \pi$, and by definition we have the nesting of sets
\[
A_1 \supseteq A_2 \supseteq \cdots
\]
so we find that the set
\[
A := \bigcap_{n \in \mathbb{N}} A_n 
\]
has Lebesgue measure $\vert A \vert\, \geq c \pi > 0$. This contradicts the fact that $t = 0$ is an isolated zero of $w(l_\theta(t))$ for almost every $\theta \in [0,\pi)$, and so \eqref{eq:lebdiff} is proven. \\
We now consider the indicator function $\mathbbm{1}_{ \{ w = 0 \, \& \, H \neq 0 \} }$. By definition, this function is zero at all points $x \in \Sigma$ where either $w(x) \neq 0$ or $H(x) = 0$. At the remaining points, \eqref{eq:lebdiff} and the Lebesgue differentiation theorem imply that $\mathbbm{1}_{ \{ w = 0 \, \& \, H \neq 0 \} } = 0$ up to null sets. That is, $\mathbbm{1}_{ \{ w = 0 \, \& \, H \neq 0 \} } = 0$ almost everywhere on $\Sigma$, i.e. the set of zeroes of $w$ where $H$ does not vanish is null, which is the statement we wanted to prove. \\
If instead $\lambda = 0$, equation \eqref{eq:main1} reduces to $\Delta w + 2 Q w = 0$, and a result from \cite{ALESSANDRINI2012669} implies that all zeroes of $w$ must then have finite orders of vanishing. This in turn allows us to use a result of \cite{hardt1989nodal} and conclude that the zero set of $w$ is null, i.e. the condition that $H = 0$ almost everywhere on the set $\{w = 0\}$ with respect to $\mu$ is vacuously true.
\end{proof}

\noindent All that remains now is to prove Proposition \ref{prop:sufexist}. 
\begin{proof}[Proof of Proposition \ref{prop:sufexist}]
The proposition follows from $L^p$ approximation with no penalisation term included ($\sigma = 0$). The same calculations as in Section \ref{subsec:epsapprox} and in Lemma \ref{lem:epsconvimm} of Section \ref{subsec:invconv} show that for fixed $2 < p < \infty$, minimisers of $\mathcal{M}_p := \mathcal{M}_p^0$ exist. Taking the sequence $(f_p)$ of $\mathcal{M}_p$-minimal immersions, the same calculations again show that $(f_p)$ weakly converges in $W^{2,q}$ for all $q < \infty$ to a limiting immersion $f$. Both times here the assumptions of Proposition \ref{prop:sufexist} have been used to ensure the inequalities in Theorem \ref{thm:KLL} are satisfied. As in Corollary \ref{cor:hpconv}, we have the equality
\[
\mathcal{M}_\infty [f] = \lim_{p} \mathcal{M}_p [f_p]
\]
and we will now argue by contradiction that $f$ is a minimiser of $\mathcal{M}_\infty$ (and hence an $\infty$-Willmore sphere by Definition \ref{def:infWillmoresurface}). Supposing there exists another immersion $f_0$ such that $\mathcal{M}_\infty[f_0] < \mathcal{M}_\infty[f]$, by H\"{o}lder's inequality we have that
\[
\lim_p A^{-\frac{1}{p}} \mathcal{M}_p[f_0] \leq \mathcal{M}_\infty[f_0] < \mathcal{M}_\infty[f] = \lim_p A^{-\frac{1}{p}} \mathcal{M}_p [f_p],
\]
so for some $p \in \mathbb{N}$ the inequality
\[
\mathcal{M}_p[f_0] < \mathcal{M}_p [f_p]
\]
holds, which contradicts the fact that $f_p$ is minimal for $\mathcal{M}_p$.
\end{proof}





\noindent \textbf{Acknowledegements.} Ed Gallagher is grateful for being funded by a studentship from the EPSRC, project reference EP/V520305/1, studentship 2446338.

\newpage
\bibliographystyle{unsrt}
\bibliography{bibliography}

\begin{thebibliography}{10}

\bibitem{willmore1965note}
Thomas~J Willmore.
\newblock Note on embedded surfaces.
\newblock {\em An. Sti. Univ.“Al. I. Cuza” Iasi Sect. I a Mat.(NS) B},
  11(493-496):20, 1965.

\bibitem{marques2014min}
Fernando~C Marques and Andr{\'e} Neves.
\newblock Min-max theory and the willmore conjecture.
\newblock {\em Annals of Mathematics}, pages 683--782, 2014.

\bibitem{li1982new}
Peter Li and Shing-Tung Yau.
\newblock A new conformal invariant and its applications to the {W}illmore
  conjecture and the first eigenvalue of compact surfaces.
\newblock {\em Inventiones mathematicae}, 69(2):269--291, 1982.

\bibitem{li2002willmore}
Haizhong Li.
\newblock {W}illmore surfaces in $\textnormal{S}^n$.
\newblock {\em Annals of Global Analysis and Geometry}, 21(2):203--213, 2002.

\bibitem{riviere2008analysis}
Tristan Riviere.
\newblock Analysis aspects of {W}illmore surfaces.
\newblock {\em Inventiones mathematicae}, 174(1):1--45, 2008.

\bibitem{kuwert2004removability}
Ernst Kuwert and Reiner Sch{\"a}tzle.
\newblock Removability of point singularities of {W}illmore surfaces.
\newblock {\em Annals of Mathematics}, pages 315--357, 2004.

\bibitem{bryant1984duality}
Robert~L Bryant.
\newblock A duality theorem for {W}illmore surfaces.
\newblock {\em Journal of differential geometry}, 20(1):23--53, 1984.

\bibitem{simon1993existence}
Leon Simon.
\newblock Existence of surfaces minimizing the {W}illmore functional.
\newblock {\em Communications in Analysis and Geometry}, 1(2):281--326, 1993.

\bibitem{bohle2008constrained}
Christoph Bohle, G~Paul Peters, and Ulrich Pinkall.
\newblock Constrained {W}illmore surfaces.
\newblock {\em Calculus of Variations and Partial Differential Equations},
  32(2):263--277, 2008.

\bibitem{bernard2014energy}
Yann Bernard and Tristan Riviere.
\newblock Energy quantization for {W}illmore surfaces and applications.
\newblock {\em Annals of Mathematics}, pages 87--136, 2014.

\bibitem{musso1990willmore}
Emilio Musso.
\newblock {W}illmore surfaces in the four-sphere.
\newblock {\em Annals of Global Analysis and Geometry}, 8(1):21--41, 1990.

\bibitem{kuwert2001willmore}
Ernst Kuwert and Reiner Sch{\"a}tzle.
\newblock The {W}illmore flow with small initial energy.
\newblock {\em Journal of Differential Geometry}, 57(3):409--441, 2001.

\bibitem{castro2001willmore}
Ildefonso Castro and Francisco Urbano.
\newblock {W}illmore surfaces of $\mathbb{R}^4$ and the {W}hitney sphere.
\newblock {\em Annals of Global Analysis and Geometry}, 19(2):153--175, 2001.

\bibitem{CANHAM1970}
P.B. Canham.
\newblock The minimum energy of bending as a possible explanation of the
  biconcave shape of the human red blood cell.
\newblock {\em Journal of Theoretical Biology}, 26(1):61--81, 1970.

\bibitem{Helfrich1973elastic}
W~Helfrich.
\newblock Elastic properties of lipid bilayers: Theory and possible
  experiments.
\newblock {\em Zeitschrift für Naturforschung C}, 28(11-12):693--703, 1973.

\bibitem{zhong1987instability}
Ou-Yang Zhong-Can and Wolfgang Helfrich.
\newblock Instability and deformation of a spherical vesicle by pressure.
\newblock {\em Physical review letters}, 59(21):2486, 1987.

\bibitem{gruber2019variation}
Anthony Gruber, Magdalena Toda, and Hung Tran.
\newblock On the variation of curvature functionals in a space form with
  application to a generalized willmore energy.
\newblock {\em Annals of Global Analysis and Geometry}, 56(1):147--165, 2019.

\bibitem{gruber2019curvature}
Anthony Gruber.
\newblock {\em Curvature functionals and p-Willmore energy}.
\newblock PhD thesis, {T}exas {T}ech {U}niversity, 2019.

\bibitem{bernard2019uniform}
Yann Bernard and Tristan Rivi{\`e}re.
\newblock Uniform regularity results for critical and subcritical surface
  energies.
\newblock {\em Calculus of Variations and Partial Differential Equations},
  58(1):1--39, 2019.

\bibitem{kuwert2015two}
Ernst Kuwert, Tobias Lamm, and Yuxiang Li.
\newblock Two-dimensional curvature functionals with superquadratic growth.
\newblock {\em Journal of the European Mathematical Society},
  17(12):3081--3111, 2015.

\bibitem{aronsson1965minimization}
Gunnar Aronsson.
\newblock Minimization problems for the functional $\sup_x {F} (x, f (x),
  f'(x))$.
\newblock {\em Arkiv f{\"o}r Matematik}, 6(1):33--53, 1965.

\bibitem{aronsson1966minimization}
Gunnar Aronsson.
\newblock Minimization problems for the functional $\sup_x {F} (x, f (x),
  f'(x))$.(ii).
\newblock {\em Arkiv f{\"o}r matematik}, 6(4-5):409--431, 1966.

\bibitem{aronsson1969minimization}
Gunnar Aronsson.
\newblock Minimization problems for the functional $\sup_x {F} (x, f (x),
  f'(x))$.(iii).
\newblock {\em Arkiv f{\"o}r matematik}, 7(6):509--512, 1969.

\bibitem{katzourakis2019existence}
Nikos Katzourakis and Roger Moser.
\newblock Existence, uniqueness and structure of second order absolute
  minimisers.
\newblock {\em Archive for Rational Mechanics and Analysis}, 231(3):1615--1634,
  2019.

\bibitem{moser2021structure}
Roger Moser.
\newblock Structure and classification results for the $\infty$-elastica
  problem.
\newblock {\em American Journal of Mathematics}, 2021.

\bibitem{gallagher2022infty}
Ed~Gallagher and Roger Moser.
\newblock The $\infty$-elastica problem on a {R}iemannian manifold.
\newblock {\em arXiv preprint arXiv:2202.07407}, 2022.

\bibitem{moser2012minimizers}
Roger Moser and Hartmut Schwetlick.
\newblock Minimizers of a weighted maximum of the {G}auss curvature.
\newblock {\em Annals of Global Analysis and Geometry}, 41(2):199--207, 2012.

\bibitem{sakellaris2017minimization}
Zisis~N Sakellaris.
\newblock Minimization of scalar curvature in conformal geometry.
\newblock {\em Annals of Global Analysis and Geometry}, 51(1):73--89, 2017.

\bibitem{ferone2016maximal}
Vincenzo Ferone, Carlo Nitsch, and Cristina Trombetti.
\newblock On the maximal mean curvature of a smooth surface.
\newblock {\em Comptes Rendus Mathematique}, 354(9):891--895, 2016.

\bibitem{stein1970singular}
Elias~M Stein.
\newblock {\em Singular integrals and differentiability properties of
  functions}, volume~2.
\newblock Princeton University Press, 1970.

\bibitem{rindler2018calculus}
Filip Rindler.
\newblock {\em Calculus of variations}, volume~5.
\newblock Springer, 2018.

\bibitem{blatt2009singular}
Simon Blatt.
\newblock A singular example for the {W}illmore flow.
\newblock {\em International mathematical journal of analysis and its
  applications}, 2009.

\bibitem{willmore1968note}
Thomas~J Willmore.
\newblock Mean curvature of immersed surfaces.
\newblock {\em An. Sti. Univ.“Al. I. Cuza” Iasi Sect. I a Mat.(NS) B},
  14(99-103), 1968.

\bibitem{topping2000towards}
Peter Topping.
\newblock Towards the {W}illmore conjecture.
\newblock {\em Calculus of Variations and Partial Differential Equations},
  11(4):361--393, 2000.

\bibitem{scharrer2021embedded}
Christian Scharrer.
\newblock Embedded {D}elaunay tori and their {W}illmore energy.
\newblock {\em Nonlinear Analysis}, 223:113010, 2022.

\bibitem{willmore1982total}
Thomas~J Willmore.
\newblock {\em Total curvature in {R}iemannian geometry}.
\newblock Ellis Horwood, 1982.

\bibitem{Michael1973}
J.~H. Michael and L.~M. Simon.
\newblock Sobolev and mean-value inequalities on generalized submanifolds of
  $\mathbb{R}^n$.
\newblock {\em Communications on Pure and Applied Mathematics}, 26(3):361--379,
  1973.

\bibitem{lee2018introduction}
John~M Lee.
\newblock {\em Introduction to Riemannian manifolds}, volume 176.
\newblock Springer, 2018.

\bibitem{gilbarg1977elliptic}
David Gilbarg and Neil~S Trudinger.
\newblock {\em Elliptic partial differential equations of second order}, volume
  224.
\newblock Springer, 1977.

\bibitem{evans2018measure}
Lawrence~C Evans and Ronald~F Garzepy.
\newblock {\em Measure theory and fine properties of functions}.
\newblock Routledge, 2018.

\bibitem{Mazja1985}
Vladimir~G. Maz'ja.
\newblock {\em {S}obolev Spaces}.
\newblock Springer, 1985.

\bibitem{ALESSANDRINI2012669}
Giovanni Alessandrini.
\newblock Strong unique continuation for general elliptic equations in 2{D}.
\newblock {\em Journal of Mathematical Analysis and Applications},
  386(2):669--676, 2012.

\bibitem{hardt1989nodal}
Robert Hardt and Leon Simon.
\newblock Nodal sets for solutions of elliptic equations.
\newblock {\em Journal of differential geometry}, 30(2):505--522, 1989.

\end{thebibliography}

\end{document}